\newcommandx{\at}[2][1=]{\todo[linecolor=red,backgroundcolor=red!25,bordercolor=red,#1]{#2}}
\definecolor{rev1}{HTML}{cb270f}
\definecolor{rev2}{HTML}{1c8235}
\DeclareMathOperator{\argmin}{arg\!\min}
\newcolumntype{Y}{>{\centering\arraybackslash}X}
\definecolor{ddg}{rgb}{0.2, 0.2, 0.2}
	\definecolor{llg}{rgb}{0.95, 0.95, 0.95}
	\tikzstyle{stuff_fill}=[rectangle,draw,fill=pink,minimum size=0.5em]
\setlist[enumerate]{leftmargin=.5in}
\setlist[itemize]{leftmargin=.5in}
\crefname{hypothesis}{Hypothesis}{Hypotheses}
\title{WARPd: A linearly convergent first-order method for inverse problems with approximate sharpness conditions\thanks{Submitted to the editors \today.
\funding{This work was supported by a Research Fellowship at Trinity College, Cambridge.}}}
\author{Matthew J. Colbrook\thanks{Centre Sciences des Données, École Normale Supérieure. (\email{m.colbrook@damtp.cam.ac.uk})}}
\begin{document}

\linespread{0.992}\selectfont{}

\maketitle
\vspace{-1mm}
\begin{abstract}
Reconstruction of signals from undersampled and noisy measurements is a topic of considerable interest. Sharpness conditions directly control the recovery performance of restart schemes for first-order methods without the need for restrictive assumptions such as strong convexity. However, they are challenging to apply in the presence of noise or approximate model classes (e.g., approximate sparsity). We provide a first-order method: Weighted, Accelerated and Restarted Primal-dual (WARPd), based on primal-dual iterations and a novel restart-reweight scheme. Under a generic approximate sharpness condition, WARPd achieves stable linear convergence to the desired vector. Many problems of interest fit into this framework. For example, we analyze sparse recovery in compressed sensing, low-rank matrix recovery, matrix completion, TV regularization, minimization of $\|Bx\|_{l^1}$ under constraints ($l^1$-analysis problems for general $B$), and mixed regularization problems. We show how several quantities controlling recovery performance also provide explicit approximate sharpness constants. Numerical experiments show that WARPd compares favorably with specialized state-of-the-art methods and is ideally suited for solving large-scale problems. We also present a noise-blind variant based on the Square-Root LASSO decoder. Finally, we show how to unroll WARPd as neural networks. This approximation theory result provides lower bounds for stable and accurate neural networks for inverse problems and sheds light on architecture choices. Code and a gallery of examples are made available online as a MATLAB package.
\end{abstract}

\vspace{-1mm}

\begin{keywords}
approximate sharpness, error bounds, accelerated methods, primal-dual algorithms, restart, compressed sensing, matrix completion, total-variation minimization, image reconstruction
\end{keywords}

\vspace{-1mm}

\begin{AMS}
65K10, 68U10, 65Y20, 68Q25, 90C25, 94A08, 15A83		
\end{AMS}\vspace{-2mm}

\section{Introduction} Reconstruction from undersampled measurements is a key problem in signal and image processing, machine learning, statistics, computer vision, and a variety of other fields. In this paper, we consider the following canonical linear inverse problem:
\begin{equation}\setlength\abovedisplayskip{5pt}\setlength\belowdisplayskip{5pt}
\label{eq:inv_prob0}
\mbox{Given measurements $b = A \varkappa + e \in \mathbb{C}^m$, recover $\varkappa  \in \mathbb{C}^N$.}
\end{equation}
Here, $A \in \mathbb{C}^{m \times N}$ represents a sampling model ($m < N$) and $e\in\mathbb{C}^N$ models noise or perturbations. Note that $\varkappa$ could correspond to a vectorized image or matrix.\footnote{We have used the notation $\varkappa$ to avoid confusion with $x$ used as a dummy variable below.} Over the last few decades there has been an explosion in nonlinear reconstruction techniques for \eqref{eq:inv_prob0} (see \cite{rudin1992nonlinear,jin17,arridge2019solving,hastie2015statistical,beck2009fast,daubechies2004iterative,chambolle2016introduction,Mallat09,candes2009exact,chandrasekaran2012convex,recht2010guaranteed,fannjiang2020numerics,adcock2021compressive} for a very incomplete list). For example, the field of compressed sensing shows that, under certain conditions, accurate reconstruction is possible if $\varkappa$ is (approximately) sparse \cite{candes2006robust,donoho2006compressed,candes2006stable}. A popular approach to recover $\varkappa$ is to solve an optimization problem of the following form:
\begin{equation}\setlength\abovedisplayskip{5pt}\setlength\belowdisplayskip{5pt}
\label{main_problem}
\min_{x\in \mathbb{C}^N} \mathcal{J}(x)+\|Bx\|_{l^1}\quad \text{s.t.}\quad\|Ax-b\|_{l^2}\leq \epsilon.
\end{equation}
Throughout this paper, $\mathcal{J}$ denotes a seminorm (e.g., a regularizer, which depends on prior assumptions about the signal $\varkappa$) and $B\in\mathbb{C}^{N\times q}$ is a generic matrix. For example, $\|Bx\|_{l^1}$ could correspond to the popular TV-seminorm $\|x\|_{\mathrm{TV}}$ \cite{chambolle2010introduction} (see \cref{sec:TVTVTVT}) or a sum $\|Wx\|_{l^1}+\lambda\|x\|_{\mathrm{TV}}$ for general $W$ (see \cref{sec:num_shear_TGV}). The formulation in \eqref{main_problem} with the constraint $\|Ax-b\|_{l^2}\leq \epsilon$ is often theoretically preferred over other variations (such as $\|Ax-b\|_{l^2}^2$ in the objective function) because a reasonable estimate of $\epsilon$ may be known \cite{becker2011nesta}. {The case of unknown $\epsilon$ and replacing the constraint $\|Ax-b\|_{l^2}\leq \epsilon$ by a term $\lambda^{-1}\|Ax-b\|_{l^2}$ in the objective function, where $\lambda$ scales \textit{independently} of the noise, is treated in \cref{sqre_root}.}

Due to the large interest in solving \eqref{main_problem} and similar problems, there is a long list of algorithms (see \cref{sec:prev_work}), with a particular emphasis on first-order methods\footnote{Due to their dimensionality, many large-scale optimization models have rendered second-order methods computationally impractical (typically large systems of linear equations are solved to compute Newton steps). Thus, efficient and accelerated first-order algorithms have become essential for tackling numerous problems.} for large scale problems. The goal is to design simple schemes (e.g., matrix/vector multiplications) that produce approximate solutions efficiently. Solving \eqref{main_problem} is a notoriously difficult challenge, with common issues being non-smoothness of $\mathcal{J}$, the analysis term $\|Bx\|_{l^1}$, the constraint $\|Ax-b\|_{l^2}\leq \epsilon$ etc. First-order methods typically need a very large number of iterations when high accuracy is required (see \cite{nesterov2003introductory} for optimal convergence rates for different classes of objective functions). There are also many works on the limits of recovering $\varkappa$ via solutions of \eqref{main_problem}, and this is intimately linked to numerical performance. It has been observed empirically that recovery problems \eqref{eq:inv_prob0} that are easier to solve theoretically (e.g., larger $m$) often lead to optimization problems \eqref{main_problem} that are easier/more efficient to solve numerically \cite{donoho2008fast}. In some cases, this has led to algorithms with accelerated convergence guarantees \cite{roulet2020computational}.

This paper provides a general framework for the accelerated (linear) convergence and stable solution of \eqref{eq:inv_prob0}. Our only assumption is an inequality of the form\footnote{See \cref{sec:extensions} for extensions, such as different norms for measuring the error.}
\begin{equation}\setlength\abovedisplayskip{5pt}\setlength\belowdisplayskip{5pt}
\label{assumption}
\|\hat x-x\|_{l^2}\! \leq\!  C_1\! \Big[\underbrace{\mathcal{J}\! (\hat x)+\|B\hat x\|_{l^1}-\mathcal{J}\! (x)-\|Bx\|_{l^1}}_{\text{objective function difference}}+C_2\underbrace{\left(\|A\hat x-b\|_{l^2}\! -\epsilon\right)}_{\text{feasibility gap}}+\underbrace{c(x,b)}_{\!\!\!\!\!\!\text{approx. term}\!\!\!\!\!\!}\Big],\quad \! \! \forall x,\hat  x\in \mathbb{C}^N.
\end{equation}
Here $C_1$ and $C_2$ are constants and $c(x,b)$ should be understood as a small approximation term. For example, in the case of sparse recovery considered in \cref{application1} and taking $x=\varkappa$, $c(\varkappa,b)$ measures the distance of $\varkappa$ to sparse vectors and contains a term proportional to the noise level $\epsilon$ (see \cref{main_theorem1}). More generally, \eqref{assumption} is much weaker than typical assumptions for acceleration such as strong convexity, and can be considered an \textit{approximate} \L{}ojasiewicz-type inequality \cite{roulet2020sharpness}. We discuss its links to other error bounds in \cref{sec:prev_work}. It turns out that many of the recovery results for \eqref{eq:inv_prob0} in the literature are proven via such an inequality or local versions restricted to specific vectors. We provide analysis of several examples below.

Given \eqref{assumption}, we provide an iterative algorithm: \textbf{W}eighted, \textbf{A}ccelerated and \textbf{R}estarted \textbf{P}rimal-\textbf{d}ual (WARPd), based on primal-dual iterations and a novel restart-reweight scheme. Our main convergence result is summarized in the following theorem.

\vspace{-1mm}
\begin{theorem}[Uniform stable recovery with linear convergence]
\label{main_theorem}
Suppose that \eqref{assumption} holds. Let $L$ be an upper bound for $\sqrt{\|A\|^2+\|B\|^2}$, $\tau\in(0,1)$ (step size), $\nu\in(0,1)$ and $\delta>0$. Then for any $n\in\mathbb{N}$ and any pair $(\varkappa,b)\in\mathbb{C}^{N}\times\mathbb{C}^m$ such that $\|A\varkappa-b\|\leq \epsilon$ and $c(\varkappa,b)\leq\delta$,
\begin{equation}\setlength\abovedisplayskip{3pt}\setlength\belowdisplayskip{3pt}
\label{key_theorem_bound}
\|\phi_{n}(b)- \varkappa\|_{l^2}\leq C_1\left(\frac{\delta}{1-\upsilon}+\upsilon^nC_2\|b\|_{l^2}\right),
\end{equation}
where $\phi_n(b)$ denotes the output of \text{WARPd} in \cref{alg:RR_primal_dual}.\vspace{-2mm}
\end{theorem}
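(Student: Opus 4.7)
The plan is to decompose the argument into an inner primal-dual convergence estimate and an outer restart contraction driven by the approximate sharpness condition \eqref{assumption}. First, I would reformulate \eqref{main_problem} as a saddle-point problem, introducing dual variables for the seminorm terms $\mathcal{J}(x)$, $\|Bx\|_{l^1}$ and the constraint $\|Ax-b\|_{l^2}\leq\epsilon$ (the latter weighted by a factor proportional to $C_2$, which is where the ``weighted'' component of WARPd enters). Standard Chambolle--Pock-type analysis of one inner phase, with step size $\tau$ chosen relative to $L\geq\sqrt{\|A\|^2+\|B\|^2}$, should yield an $O(L/k)$ ergodic bound, evaluated at the true signal $\varkappa$, on exactly the quantity
\begin{equation*}
\mathcal{J}(x_k)+\|Bx_k\|_{l^1}-\mathcal{J}(\varkappa)-\|B\varkappa\|_{l^1}+C_2\bigl(\|Ax_k-b\|_{l^2}-\epsilon\bigr),
\end{equation*}
in terms of the initial distance $\|x_0-\varkappa\|_{l^2}$. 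Matching the $C_2$ weight on the feasibility gap to the one appearing in \eqref{assumption} is the whole point of the reweighting.

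Second, I would plug this gap estimate into \eqref{assumption} with $\hat x=x_k$, $x=\varkappa$, using the hypothesis $c(\varkappa,b)\leq\delta$, to obtain an inequality of the form
\begin{equation*}
\|x_k-\varkappa\|_{l^2}\leq C_1\!\left(\frac{K\,L\,\|x_0-\varkappa\|_{l^2}}{k}+\delta\right)
\end{equation*}
for some absolute constant $K=K(\tau)$. Choosing the inner-loop length $k$ so that $C_1KL/k\leq\upsilon$, one phase of WARPd contracts the error as $\|x_k-\varkappa\|_{l^2}\leq\upsilon\|x_0-\varkappa\|_{l^2}+C_1\delta$. Induction over the $n$ restart phases, together with summation of the resulting geometric series in $\upsilon$, then gives
\begin{equation*}
\|\phi_n(b)-\varkappa\|_{l^2}\leq \upsilon^n\,\|x^{(0)}-\varkappa\|_{l^2}+\frac{C_1\delta}{1-\upsilon}.
\end{equation*}

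Third, to dispose of the initial distance I would use \eqref{assumption} once more with $\hat x=0=x^{(0)}$ and $x=\varkappa$. Since $\mathcal{J}$ and $\|B\cdot\|_{l^1}$ are seminorms vanishing at $0$ and since $\|A\varkappa-b\|_{l^2}\leq\epsilon$, this yields $\|\varkappa\|_{l^2}\leq C_1C_2\|b\|_{l^2}+C_1\delta$, which absorbs into the bound \eqref{key_theorem_bound} with the $C_1\delta$ part hidden in the already-present noise floor $C_1\delta/(1-\upsilon)$. The reweight schedule between phases is designed precisely so that, as the iterates enter successively smaller balls around $\varkappa$, the primal and dual smoothing parameters are rescaled to keep the per-phase contraction constant $\upsilon$ uniform.

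The main obstacle will be the first step: calibrating the saddle-point reformulation and its smoothing so that the Chambolle--Pock-type ergodic gap reads out exactly the sharpness-compatible combination above (with the correct weight $C_2$ on the feasibility violation), without accumulating factors that would prevent the constant $\upsilon\in(0,1)$ from being chosen freely. Once that is in place, the restart induction, the geometric sum, and the initial-distance estimate are essentially bookkeeping, modulo verifying that the dependence on $L$, $C_1$, $C_2$, and $\tau$ is packaged into the single inner-iteration count $k=k(\upsilon,\tau,C_1,C_2,L)$ prescribed by \cref{alg:RR_primal_dual}.
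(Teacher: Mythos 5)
Your overall architecture --- a primal-dual inner phase bounding a $C_2$-weighted gap, the sharpness condition \eqref{assumption} converting gap to distance, restarts giving geometric decay down to a $C_1\delta$ floor, and an application of \eqref{assumption} at $\hat x=0$ to seed the recursion with $C_2\|b\|_{l^2}$ --- is the same as the paper's. But there is a genuine gap in your second step. The Chambolle--Pock ergodic estimate does not give a bound of the form $KL\|x_0-\varkappa\|_{l^2}/k$ with $K$ an absolute constant; it gives
\begin{equation*}
G_{C_2}(X_k,\varkappa,b)\leq \frac{1}{k}\left(\frac{\|x_0-\varkappa\|_{l^2}^2}{\tau_1}+\frac{C_2^2+q}{\tau_2}\right),
\end{equation*}
which is \emph{quadratic} in the initial distance and carries an additive term coming from the norms of the dual test points. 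A bound linear in the distance --- from which your choice ``$k$ so that $C_1KL/k\leq\upsilon$'' would give a contraction --- only emerges after the paper's rescaling step: one runs the inner phase on the rescaled data $b/\beta$, $x_0/\beta$, $\epsilon/\beta$, uses the positive $1$-homogeneity of $\mathcal{J}+\|B\cdot\|_{l^1}$ and of the feasibility gap to multiply the resulting bound back by $\beta$, and then chooses $\beta$ proportional to (a proxy for) $\|x_0-\varkappa\|_{l^2}/\sqrt{C_2^2+q}$ to balance the two terms. This homogeneity argument is the load-bearing idea behind the ``weighted'' part of WARPd, not bookkeeping, and it is where the factor $\sqrt{C_2^2+q}$ in $k=\lceil 2LC_1\sqrt{C_2^2+q}/(\upsilon\tau)\rceil$ comes from; your stated per-phase inequality omits it.

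Relatedly, your induction is carried directly on the unknown distance $\|\phi_j(b)-\varkappa\|_{l^2}$, but the algorithm's reweights $\beta_j$ are computed from the explicit schedule $\epsilon_j=\upsilon(\delta+\epsilon_{j-1})$, $\epsilon_0=C_2\|b\|_{l^2}$, not from the distance. The paper therefore propagates the hypothesis $G_{C_2}(\phi_j(b),\varkappa,b)\leq\epsilon_j$ (a computable surrogate) and invokes \eqref{assumption} only to bound $\|\phi_{j-1}(b)-\varkappa\|_{l^2}\leq C_1(\epsilon_{j-1}+\delta)$ inside the quadratic term, converting the gap bound to \eqref{key_theorem_bound} once at the very end. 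Your distance-only induction can be repaired, but only by routing it through exactly this gap recursion, so the induction hypothesis should be stated as a bound on $G_{C_2}$ rather than on $\|\phi_j(b)-\varkappa\|_{l^2}$.
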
\vspace{-2mm}

The total number of inner iterations is a multiple of $n$. We show that the optimal choice of $\upsilon$ is $\exp(-1)$, for which $\sim LC_1\sqrt{C_2^2+q}\cdot\log({C_2\|b\|_{l^2}}/{\delta})$ total inner iterations are required to balance the two terms on the right-hand side of \cref{key_theorem_bound}. In other words, \cref{main_theorem} demonstrates \textit{linear (or exponential) convergence down to the error bound $\sim C_1\delta$}. Note that the barrier $C_1\delta$ between solutions of \eqref{main_problem} and $\varkappa$ in \eqref{eq:inv_prob0} is to be expected from the $c(\cdot,b)$ term in \eqref{assumption} when the objective function difference and feasibility gap vanish. Moreover, the convergence result is stable in perturbations to $\varkappa$ or $b$, with stability governed by $c(\varkappa,b)\leq\delta$. Finally, each iteration of WARPd only requires a few matrix-vector operations and applying the proximal map of $\mathcal{J}$. In particular, we do not assume anything on the matrices $A$ and $B$ (e.g., we do not assume that $A^*A$ is an orthogonal projector or that $B$ is diagonal). Together with the acceleration, this makes WARPd very computationally efficient.

Many problems of interest satisfy a version of \eqref{assumption} and there is great flexibility in our framework. To be concrete, we explicitly analyze the following examples:
\begin{itemize}[leftmargin=0.8in]
	\item[\cref{application1}:] Sparse recovery, using the robust null space property (in levels) to obtain \eqref{assumption}.
	\item[\cref{application2}:] Low-rank matrix recovery, using the Frobenius-robust rank null space property to obtain \eqref{assumption}.
	\item[\cref{sec:mat_com_exm}:] Matrix completion, using the existence of approximate dual certificates to obtain a local version of \cref{assumption}.
	\item[\cref{application3}:] Examples with non-trivial matrix $B$ including $l^1$-analysis with frames (using a generalization of the restricted isometry property to obtain \eqref{assumption}) and total variation minimization (using the restricted isometry property to obtain \eqref{assumption}).
\end{itemize}
Comprehensive numerical experiments demonstrate that WARPd compares favorably with state-of-the-art methods. We also consider a variant WARPdSR in \cref{sqre_root} that covers the case of unknown $\epsilon$ and replaces the constraint $\|Ax-b\|_{l^2}\leq \epsilon$ by a term $\lambda^{-1}\|Ax-b\|_{l^2}$ in the objective function, where $\lambda$ scales \textit{independently} of the noise. Some further extensions and adaptations are also discussed in \cref{sec:extensions}. For example, one can replace the $l^1$-norm in the $\|Bx\|_{l^1}$ term by any norm whose dual unit ball has a simple projection.

\subsection{Accurate and stable neural networks (NNs)}
\label{sec:NN_unravel_alg}

Given the current intense interest in deep learning (DL), it is not surprising that numerous DL-based methods are now being proposed for the above and similar problems (see \cite{wang2018image,jin17,hammernik2018learning,mccann2017convolutional,arridge2019solving,bubba2019learning,kobler2020total} for a small sample). There is ample evidence that DL has the potential to achieve state-of-the-art results in numerous applications. However, a current challenge is that many DL-based methods lack theoretical foundations regarding reconstruction guarantees, convergence rates, stability analysis, and other basic numerical analysis questions. The stability question is particularly alarming, with empirical evidence that current DL techniques typically lead to unstable methods for inverse problems (e.g., ``adversarial attacks'') \cite{huang2018some,antun2020instabilities,finlayson2019adversarial}. For example, this is a problem in real-world clinical practice. Facebook and NYU's 2019 FastMRI challenge reported that networks that performed well in standard image quality metrics were prone to false negatives, failing to reconstruct small but physically relevant image abnormalities \cite{knoll2020advancing}. Subsequently, the 2020 FastMRI challenge \cite{muckley2020state} focused on pathologies and ``AI-generated hallucinations.'' AI-generated hallucinations pose a serious danger in applications such as medical imaging. The big problem, therefore, is to compute/train NNs that are both accurate and stable \cite{devore2020neural,adcock2020gap,colbrookNN2021can}.

In light of this, we consider unrolling WARPd as a NN. Unrolling iterative algorithms as NNs is an increasingly popular method \cite{mccann2017convolutional,monga2019algorithm} and is particularly well-suited to scenarios where it is difficult to collect large training samples. Naive unrolling of first-order iterative methods typically provides slow $\mathcal{O}(\delta+n^{-1})$ (or $\mathcal{O}(\delta+n^{-2})$ in certain regimes) convergence guarantees in the number of hidden layers $n$.\footnote{There are exceptions, such as \cite{chen2018theoretical, liu2018alista} for LISTA (a learned version of ISTA) ensuring the existence of NN with linear convergence towards the minimizer. Yet, neither \cite{chen2018theoretical} nor \cite{liu2018alista} use the theoretically correct weights, as these can only be computed as solutions of intractably large optimization problems. It is also unclear whether the needed assumptions on $A$ hold in practice.} Instead, we gain convergence $\mathcal{O}(\delta+\exp(-n))$, providing lower bounds on what is achievable in terms of stability and accuracy of a NN. The following theorem provides the approximation theory result.\footnote{At no point is this paper do we train a neural network.}

\begin{theorem}
\label{main_theorem_NN}
Let $L$ be an upper bound for $\sqrt{\|A\|^2+\|B\|^2}$ and $\delta>0$. Suppose that \eqref{assumption} holds and that the proximal map of $\mathcal{J}$ can be approximated to the required accuracy described by $\mu$ in \eqref{stab_bound} via a NN of width bounded by a constant times $m+N+q$ and depth $M$. We provide a NN $\phi$ of width bounded by a constant times $m+N+q$ and depth bounded by a constant times $MLC_1\sqrt{C_2^2+q}\cdot\log\left({C_2\|b\|_{l^2}}/{\delta}\right)$ such that the following uniform stable recovery guarantee holds. For any pair $(\varkappa,b)\in\mathbb{C}^{N}\times\mathbb{C}^m$ such that $\|A\varkappa-b\|\leq \epsilon$ and $c(\varkappa,b)\leq\delta$,
\begin{equation}\setlength\abovedisplayskip{5pt}\setlength\belowdisplayskip{5pt}
\label{key_theorem_bound_NN}
\|\phi(b)- \varkappa\|_{l^2}\lesssim C_1\delta.
\end{equation}
\end{theorem}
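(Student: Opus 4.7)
The plan is to prove Theorem \ref{main_theorem_NN} by unrolling the WARPd iterative algorithm from Theorem \ref{main_theorem} as a neural network, translating each primal-dual inner iteration into a bounded number of layers. The key observation is that every inner iteration of WARPd consists of (i) affine operations involving $A$, $A^*$, $B$, $B^*$, (ii) applying the proximal map of the $l^1$ norm (soft thresholding), which is expressible exactly as a shallow ReLU subnetwork of width $\lesssim m+N+q$ and constant depth, and (iii) applying the proximal map of the seminorm $\mathcal{J}$, which by hypothesis can be approximated by a NN of width $\lesssim m+N+q$ and depth $M$. The restart-reweight book-keeping amounts to scalar multiplications and additions. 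Consequently, a single inner iteration can be realized by a NN block of width $\lesssim m+N+q$ and depth $\lesssim M + O(1)$.

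The depth count follows by choosing the contraction factor $\upsilon=\exp(-1)$ as prescribed after Theorem \ref{main_theorem}, for which the total number of inner iterations needed to balance the two terms on the right-hand side of \eqref{key_theorem_bound} is $n\sim LC_1\sqrt{C_2^2+q}\cdot\log(C_2\|b\|_{l^2}/\delta)$. Concatenating the $n$ blocks described above yields a NN of width $\lesssim m+N+q$ and depth $\lesssim MLC_1\sqrt{C_2^2+q}\cdot\log(C_2\|b\|_{l^2}/\delta)$. Feeding the measurement $b$ as input and returning the final primal iterate gives the map $\phi(b)$.

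The principal obstacle, and the step deserving the most care, is the propagation of the proximal-map approximation error through the unrolled iterations. Each layer incurs an error of size $\mu$ in the evaluation of $\mathrm{prox}_{\mathcal{J}}$, and these errors can compound across the $O(\log(1/\delta))$ iterations. The remedy is a perturbation analysis of the primal-dual recursion, which shows that the cumulative deviation of the inexact iterates from the exact WARPd trajectory is controlled by a geometric sum in $\mu$ that remains bounded by a constant multiple of $\delta$ provided $\mu$ is chosen polynomially small in $\delta$ (and in the problem parameters $L,C_1,C_2,q$). The contractive factor $\upsilon<1$ of the outer restart loop is what prevents blow-up of these errors across restarts; this is precisely the role of the stability bound quantified by $\mu$ in the hypothesis.

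With the accumulated approximation error controlled and $n$ chosen as above, Theorem \ref{main_theorem} applied to the exact iteration gives $\|\phi_n(b)-\varkappa\|_{l^2}\leq C_1\delta/(1-\upsilon)+C_1\upsilon^n C_2\|b\|_{l^2}$, the first term being $O(C_1\delta)$ by the choice of $\upsilon$ and the second being $\leq C_1\delta$ by the choice of $n$; adding the $O(C_1\delta)$ discrepancy between the exact iteration and the NN evaluation yields the desired uniform bound \eqref{key_theorem_bound_NN}. Since the architecture depends only on $A$, $B$, $\mathcal{J}$, and the target tolerance $\delta$, and not on the unknown $\varkappa$ nor on the particular noise realization, the bound is uniform over all admissible pairs $(\varkappa,b)$, completing the proof.
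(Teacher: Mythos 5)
Your proposal is correct and follows essentially the same route as the paper: unroll each primal-dual inner iteration (the affine maps, the explicit subnetworks for $\gamma_\rho$ and $\varsigma_\rho$, and the assumed width-$O(m+N+q)$, depth-$M$ approximation of $\mathrm{prox}_{\mathcal{J}}$) into a constant-width block, concatenate $\sim LC_1\sqrt{C_2^2+q}\cdot\log(C_2\|b\|_{l^2}/\delta)$ such blocks together with the restart rescalings, and invoke \cref{main_theorem}. One small clarification: within each restart block the primal-dual operator is only non-expansive, so the proximal errors accumulate additively (as in \eqref{p3_key_bd0}) rather than as a geometric sum, and it is the choice $\mu\sim k^{-2}$ in \eqref{stab_bound} — already granted by the hypothesis — that absorbs this via \cref{approx_PD_alg}; the geometric decay you invoke acts only across restarts through the recursion $\epsilon_j=\upsilon(\delta+\epsilon_{j-1})$.
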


The key points are: (a) the total number of parameters and depth of the NN only depend logarithmically on the error tolerance $\delta$ (\textit{accuracy} and \textit{efficiency}), and (b) the recovery guarantee is stable in the $l^2$-norm (in terms of the data $b$ and the bound $c(\varkappa,b)\leq\delta$) for the model class described by $c(\varkappa,b)\leq\delta$ (\textit{stability}). This result provides lower bounds for what is achievable in terms of stable and accurate neural networks.

Regarding the approximation of the proximal map of $\mathcal{J}$, for the examples in \cref{application1,application3} this can be achieved exactly using a fixed depth (so we can take $M=\mathcal{O}(1)$ in \cref{main_theorem_NN}). For the examples of low-rank matrix recovery and matrix completion in \cref{application2,sec:mat_com_exm}, the proximal map is computed via a partial singular value decomposition. This is typically achieved via iterative methods, which can be unrolled as recurrent NNs. The precise number of iterations is heavily dependent on the matrix and singular values/vectors that are sought. Finally, we point out (see also \cref{sec:complexity}) that one can obtain similar results where the matrices $A$ and $B$ are only known approximately, and the non-linear maps in each layer are only applied approximately.

\subsection{Connections with previous work}
\label{sec:prev_work}

Additional to this section, we provide connections with previous work that are specific to each of \cref{application1,application2,sec:mat_com_exm,application3} throughout the paper. We do not cover here the vast literature on NN techniques, which was discussed in \cref{sec:NN_unravel_alg}.

\textbf{First-order methods:} There are numerous specialized algorithms for various instances of \eqref{main_problem} and closely related problems \cite{beck2009fast,becker2011nesta,daubechies2004iterative,figueiredo2007gradient,van2009probing}, as well as general-purpose solvers \cite{becker2011templates}. A common approach is to apply some form of smoothing and use Nesterov's acceleration \cite{nesterov2005smooth}, which achieves an objective function suboptimality of $\delta$ in $\mathcal{O}(\delta^{-1/2})$ steps for the smoothed problem, in combination with techniques such as continuation for the smoothing parameter. Higher values of smoothing improve numerical performance of underlying solvers but at the expense of accuracy, and balancing this precise trade-off is difficult \cite{becker2011templates}. We will not attempt to survey this vast area but point the reader to \cite{chambolle2016introduction,beck2017first}. More generally, the complexity of first-order methods is usually controlled by smoothness assumptions on the objective function, such as Lipschitz continuity of its gradient. Additional assumptions on the objective function such as strong and uniform convexity provide, respectively, linear and faster polynomial rates of convergence \cite{nesterov2003introductory}. For example, using variants of the classical strong convexity assumption, linear convergence results have been obtained for LASSO \cite{agarwal2012fast,zhou2015ell_1}. However, strong or uniform convexity are often too restrictive in many applications. For results on asymptotic linear convergence of standard methods (e.g., proximal gradient method) for certain continuously differentiable (but non strongly convex) objective functions, see \cite{liang2014local,necoara2019linear,zhou2017unified}.

\textbf{\L{}ojasiewicz-type inequalities:} Achieving linear convergence for restarted first-order methods typically requires a \L{}ojasiewicz-type or ``sharpness'' inequality such as
\begin{equation}\setlength\abovedisplayskip{5pt}\setlength\belowdisplayskip{5pt}
\label{loja}\gamma d(\hat x,X^*)^{\nu}\leq f(\hat x)-f^*,
\end{equation}
also known as a H\"olderian error bound, with knowledge of $\gamma$ and $\nu$ \cite{freund2018new,roulet2020computational,roulet2020sharpness}. Here $f$ is the objective function (with optimal value $f^*$) and $d(\cdot,X^*)$ denotes the distance to the set of minimisers. For example, Nemirovskii and Nesterov \cite{nemirovskii1985optimal} linked a ``strict minimum'' condition similar to \eqref{loja} with faster convergence rates using restart schemes for smooth objective functions. H\"olderian error bounds where first introduced by Hoffman \cite{hoffman1952approximate} to study systems of linear inequalities, and extended  to convex optimization in \cite{robinson1975application,mangasarian1985condition,auslender1988global,burke1993weak,burke2002weak}. \L{}ojasiewicz proved that \eqref{loja} holds generically for real analytic and subanalytic functions \cite{lojasiewicz1963propriete}, which was extended to nonsmooth subanalytic convex functions by Bolte, Daniilidis, and Lewis \cite{bolte2007lojasiewicz}.

There is, however, a key difference between \eqref{assumption} and \eqref{loja}, and hence also between the restart scheme of this paper and the above cited work. In \eqref{assumption}, we only assume \textit{approximate} control of the distance via the objective function difference - this is reflected by the parameter $\delta$ in \cref{main_theorem} and the term $c(x,b)$ in \eqref{assumption}. For the type of problems we consider, this gives us greater generality and allows us to tackle the case of \textit{noisy measurements}, as well as prove \textit{robustness} of our results (e.g., when considering sparse recovery, we cover approximately sparse vectors). However, it also means that the vector $\varkappa$ can only be recovered approximately to order $\delta$. Curiously, numerical experiments below demonstrate that we continue to achieve linear convergence to a solution of \cref{main_problem}, which suggests that a combination of restarting and reweighting can take advantage of properties analogous to \eqref{assumption} around minimizers.

For further use of \L{}ojasiewicz-type inequalities for first-order methods (e.g., assessing asymptotic rates of convergence), see \cite{bolte2014proximal,bolte2017error,attouch2010proximal,frankel2015splitting}. Further works on restart schemes include \cite{fercoq2016restarting}, which showed that generic restart schemes can offer linear convergence given a rough estimate of the behavior of the function around its minimizers, and \cite{o2015adaptive}, which developed a heuristic analysis for restarts based on ripples or bumps in the trace of the objective value.

\textbf{The example of sparse recovery:} The use of \eqref{assumption} is closely related to \cite{benlectures}, who were one of the first to realize how key assumptions in compressed sensing -- such as the \emph{robust nullspace property} -- help bound the error of the approximation to a minimizer (produced by an optimization algorithm) in terms of error bounds on the approximation to the objective function. For example, \cite{roulet2020computational} achieves linear convergence, using the restarted NESTA algorithm \cite{becker2011nesta}, for exact recovery (noiseless) of real-valued sparse vectors if $A$ satisfies the null space property of order $s$. Under this assumption, if $x$ is $s$-sparse and $A\hat x=Ax$, then one has
\begin{equation}\setlength\abovedisplayskip{5pt}\setlength\belowdisplayskip{5pt}
\label{jbakdcabv}
\|\hat x-x\|\lesssim \|\hat x\|_1-\|x\|_1.
\end{equation}
The restart scheme in \cite{roulet2020computational} is based on a careful reduction in the smoothing parameter, chosen by analyzing a combination of the error bounds for NESTA and \eqref{jbakdcabv}. Though our methods are completely different (e.g., we do not rely on smoothing, and we must take into account the additional error term owing to the approximate \L{}ojasiewicz-type inequality), for the specific case of sparse recovery discussed in \cref{application1}, our results can be considered a generalization of \cite{roulet2020computational} to allow measurement noise, approximate sparsity, and structured compressed sensing.

Finally, the author of the current paper developed a simplified version of the restart scheme used in WARPdSR (see \cref{sqre_root}) based on the Square-Root LASSO decoder for the specific case of sparse recovery ($B=0$ and $\mathcal{J}(x)=\|x\|_{l^1_w}$, see \cref{application1}) from Fourier and binary measurements in \cite{colbrookNN2021can}. The outcome was stable and accurate NNs, where unrolled iterations led to Fast Iterative REstarted NETworks (FIRENETs). \cref{main_theorem_NN} continues in this direction and provides foundations for stable and accurate NNs for a much wider class of problems. It was also shown in \cite{colbrookNN2021can} that there are fundamental computability barriers for solving $l^1$ minimization if conditions such as \eqref{assumption} are not met (here, we mean computing a minimizing vector as opposed to vectors that nearly minimize the objective function).

\textbf{Primal-dual algorithms:} WARPd uses iterations of Chambolle and Pock's primal-dual algorithm \cite{chambolle2016ergodic,chambolle2011first} and a novel restart-reweight scheme. The primal-dual hybrid gradient (PDHG) algorithm is a popular method to solve saddle point problems \cite{esser2010general,pock2009algorithm,chambolle2018stochastic}. The linear convergence of primal-dual methods under different conditions is widely studied. For example, see \cite{daskalakis2017training} for bilinear problems (with a focus on training GANs) and \cite{valkonen2017acceleration} for partially strongly convex functions. Recently, \cite{applegate2021faster} developed an adaptive restart scheme for PDHG applied to linear programming and showed linear convergence.

\subsection{Notation} We use $\|\cdot\|_{l^p}$ to denote the standard $l^p$-norm of vectors and $\langle \cdot,\cdot\rangle$ to denote the standard inner product on $\mathbb{C}^n$. Given a linear operator $A$ between Banach spaces, we denote the operator norm of $A$ by $\|A\|$. For a seminorm $\mathcal{J}$, we define $\|\mathcal{J}\|=\sup_{\|x\|_{l^2}=1}\|\mathcal{J}(x)\|$. Given a lower semi-continuous convex function $f$ from a Hilbert space $\mathcal{H}$ to $[-\infty,\infty]$, we use the proximal operator $\mathrm{prox}_f(v)=\mathrm{argmin}_{x\in\mathcal{H}}f(x)+\frac{1}{2}\|v-x\|^2$. Throughout, $a\lesssim b$ will mean there is a constant  $C$ (independent of all relevant parameters) such that $a\leq Cb$. Finally, given a matrix $M$ with singular values $\sigma_1(M)\geq \sigma_2(M)\geq...\geq \sigma_{r}(M)$, we denote by $\|M\|_{p}$ the Schatton $p$-norm of $M$, which is the $l^p$-norm of the sequence of singular values $\{\sigma_j(M)\}$.

\subsection{Outline of paper}

In \cref{THEALG}, we introduce WARPd, prove its convergence properties (e.g., \cref{main_theorem}), discuss its computational complexity and practice, provide a variation (WARPdSR) suitable for noise-blind recovery problems (unknown $\epsilon$) and prove \cref{main_theorem_NN}. \Cref{application1} analyzes the example of sparse recovery, \cref{application2} analyzes the example of (approximately) low-rank matrix recovery, \cref{sec:mat_com_exm} analyzes the example of matrix completion, and \cref{application3} analyzes the examples of $l^1$-analysis and total variation minimization. Numerical examples are given throughout the paper and code is available at \url{https://github.com/MColbrook/WARPd}. For brevity, proofs of the theoretical results we derive in \cref{application1,application2,sec:mat_com_exm,application3} as well as of \cref{main_theoremb} are given in the supplementary materials.

\section{The accelerated algorithm}
\label{THEALG}
We begin with the primal-dual iterations in \cref{theory1} and then describe the restart scheme in \cref{theory2}. \cref{main_theorem} provides the error bounds for WARPd described in \cref{alg:RR_primal_dual}. Computational considerations are given in \cref{sec:complexity} and we prove \cref{main_theorem_NN} in \cref{sec:NN_unn}. In \cref{sqre_root}, we provide a variation, WARPdSR, based on replacing the constraint $\|Ax-b\|_{l^2}\leq \epsilon$ in \eqref{main_problem} with an additional data fitting term $\|Ax-b\|_{l^2}$ in the objective function. This is well suited to noise-blind recovery problems (unknown $\epsilon$) and provides an elegant means to bound dual variables for warm restarts. Finally, we discuss extensions in \cref{sec:extensions}.

\subsection{Primal-dual iterations}
\label{theory1}
Our starting to point is to recast the problem \eqref{main_problem} as an equivalent saddle point problem. Let $u=[\mathrm{real}(x); \hspace{1mm}\mathrm{imag}(x)]=[x_1; \hspace{1mm}x_2]\in\mathbb{R}^{2N}$ denote the primal variable so that $x=x_1+ix_2$, and $\hat b=[\mathrm{real}(b); \hspace{1mm}\mathrm{imag}(b)]\in\mathbb{R}^{2m}$. Define the matrices
$$\setlength\abovedisplayskip{5pt}\setlength\belowdisplayskip{5pt}
K_1\!=\!\begin{pmatrix}
\mathrm{real}(A)\!\!\!\!    & -\mathrm{imag}(A)\\
\mathrm{imag}(A)\!\!\!\!    & \mathrm{real}(A)
\end{pmatrix}\!\in\!\mathbb{R}^{2m\times 2N},\!\!  \quad K_2\!=\!\begin{pmatrix}
\mathrm{real}(B)\!\!\!\!    & -\mathrm{imag}(B)\\
\mathrm{imag}(B)\!\!\!\!   & \mathrm{real}(B)
\end{pmatrix}\!\in\!\mathbb{R}^{2q\times 2N},\!\!  \quad K\!=\!\begin{pmatrix}
K_1\\
K_2
\end{pmatrix}.
$$
$K_1$ and $K_2$ correspond to $A$ and $B$, when viewed as linear maps on the corresponding real vector spaces, and $\|K_1\|=\|A\|$, $\|K_2\|=\|B\|$, $\|K\|\leq \sqrt{\|A\|^2+\|B\|^2}$. Finally, let
$$\setlength\abovedisplayskip{5pt}\setlength\belowdisplayskip{5pt}
j(u)=\mathcal{J}(x_1+ix_2), \quad \|u\|_{l^1_{\mathbb{C}}}=\|x_1+ix_2\|_{l^1}.
$$
With this notation in hand, the problem \eqref{main_problem} is equivalent to
$$\setlength\abovedisplayskip{5pt}\setlength\belowdisplayskip{5pt}
\min_{u\in\mathbb{R}^{2N}}j(u)+\|K_2u\|_{l^1_{\mathbb{C}}}\quad \text{s.t.} \quad\|K_1u-\hat b\|_{l^2}\leq \epsilon.
$$
Via dualizing the feasibility condition and $\|K_2u\|_{l^1_{\mathbb{C}}}$, we obtain the saddle point problem
\begin{equation}\setlength\abovedisplayskip{5pt}\setlength\belowdisplayskip{5pt}
\label{saddleiighujb}
\min_{u\in\mathbb{R}^{2N}}\max_{y_1\in\mathbb{R}^{2m},y_2\in\mathbb{R}^{2q}}\mathcal{L}(u,y) := \langle K_1u-\hat b,y_1 \rangle +\langle K_2u,y_2 \rangle+ j(u) - \epsilon\|y_1\|_{l^2}-\chi_{\mathcal{B}_\infty}(y_2).
\end{equation}
Here, $\langle\cdot,\cdot\rangle$ denotes the usual inner product, $\chi_S$ denotes the indicator function of a set $S$ (taking the value $0$ on $S$ and $+\infty$ otherwise) and $\mathcal{B}_\infty$ denotes the complex closed unit $l^\infty$ ball.

To solve \eqref{saddleiighujb}, we use a primal-dual algorithm \cite{chambolle2016ergodic,chambolle2011first}. We use $y=(y_1,y_2)^\top$ to denote the dual variables and start by setting $u^{(0)}=u_0$ and $y^{(0)}=0$. We then iterate via
\begin{equation}\setlength\abovedisplayskip{5pt}\setlength\belowdisplayskip{5pt}\label{eq:iterCP}
\begin{split}
u^{(k+1)}&\! =\! \argmin_{u \in \mathbb{R}^{2N}} j(u) +\frac{1}{2\tau_1}\|u-(u^{(k)}-\tau_1 K^* y^{(k)})\|^2_{l^2}\! =\! \mathrm{prox}_{\tau_1j}(u^{(k)}-\tau_1 K^* y^{(k)})\\
y^{(k+1)}&\! =\! \argmin_{y \in \mathbb{R}^{2m+2q}} \!\epsilon\|y_1\|_{l^2}\!+\!\langle \hat b,y_1\rangle\! +\!\chi_{\mathcal{B}_\infty}\!(y_2) \!+\!\frac{1}{2\tau_2}\|y\!-\! y^{(k)}\!-\!\tau_2 K (2u^{(k+1)}\!-\! u^{(k)})\|_{l^2}^2\\
&\! =\! \left(\gamma_{\tau_2\epsilon}(y^{(k)}_1+\tau_2 K_1 (2u^{(k+1)}-u^{(k)})-\tau_2 \hat b),\varsigma_1(y^{(k)}_2+\tau_2 K_2 (2u^{(k+1)}-u^{(k)}))\right)^\top,
\end{split}
\end{equation}
where $\tau_1,\tau_2>0$ denote proximal step sizes and we define the functions
\begin{equation}\setlength\abovedisplayskip{5pt}\setlength\belowdisplayskip{5pt}
\label{simple_function1}
\gamma_\rho(y_1):=\max\left\{0,1-{\rho}/{\|y_1\|_{l^2}}\right\}y_1, \quad [\varsigma_\rho(z)]_j=\min\left\{1,{\rho}/{|z_j|}\right\}z_j,
\end{equation}
for $z=y_2$ written in complex form. To obtain the final line in \eqref{eq:iterCP}, we use the well-known proximal maps of the $l^2$-norm and $\chi_{\mathcal{B}_\infty}$. We use $\mathrm{PD}_{\tau_1,\tau_2}$ to denote the exact updates so that
\begin{equation}\setlength\abovedisplayskip{5pt}\setlength\belowdisplayskip{5pt}\label{PDE_dfgg}
(x^{(k+1)},y^{(k+1)})=\mathrm{PD}_{\tau_1,\tau_2}(x^{(k)},y^{(k)}).\end{equation}
For notational convenience, we define
\begin{equation}\setlength\abovedisplayskip{5pt}\setlength\belowdisplayskip{5pt}
\label{def:GGG}
G_{\eta}(\hat x,x,b):=\underbrace{\mathcal{J}(\hat x)+\|B\hat x\|_{l^1}-\mathcal{J}(x)-\|B x\|_{l^1}}_{\text{objective function difference}}+\eta\underbrace{\left(\|A\hat x-b\|-\epsilon\right)}_{\text{feasibility gap}},
\end{equation}
for multiplier $\eta\geq0$. Note that \eqref{assumption} allows us to bound the distance between vectors $x$ and $\hat x$ in terms of $G_{C_2}(\hat x,x,b)$ and $c(x,b)$. Hence we would like to control the size of $G_{C_2}(\hat x,x,b)$. As a first step, \cref{approx_PD_alg} provides an explicit bound for $G_{\eta}$ (see \eqref{p3_key_bd}) for inexact (see \eqref{p3_key_bd0}) primal-dual updates without restarts. We treat inexact updates to provide stability results, cover the case of inexact information regarding $A$ and $B$, and to cover cases where the proximal map of $\mathcal{J}$ is applied approximately (see \cref{sec:mat_alg_comp}).

\begin{theorem}[Stable bounds on $G_{\eta}$ for inexact primal-dual updates]
\label{approx_PD_alg}
Suppose that the step sizes $\tau_1$ and $\tau_2$ satisfy $\tau_1\tau_2({\|A\|^2+\|B\|^2})< 1$. Let $x_0\in \mathbb{C}^m$ and $u^{(0)}=[\mathrm{real}(x_0); \hspace{1mm}\mathrm{imag}(x_0)]$. Set $\tilde u^{(0)}=u^{(0)}$ and $\tilde y^{(0)}=y^{(0)}=0$. Suppose that $\left(\tilde u^{(k)},\tilde y^{(k)}\right)$ are such that
$$\setlength\abovedisplayskip{5pt}\setlength\belowdisplayskip{5pt}
\|(\tilde u^{(k)},\tilde y^{(k)})^\top-\mathrm{PD}_{\tau_1,\tau_2}(\tilde u^{(k-1)},\tilde y^{(k-1)})^\top\|_{l^2}\leq \epsilon_k,\quad k\geq 1.
$$
In other words, each primal-dual iterate is approximately applied/computed to accuracy $\epsilon_k$. Define the four ergodic averages (where $(u^{(k)},y^{(k)})$ denote the exact updates)
$$\setlength\abovedisplayskip{5pt}\setlength\belowdisplayskip{5pt}
U^{(n)}=\frac{1}{n}\sum_{k=1}^nu^{(k)},\quad \tilde U^{(n)}=\frac{1}{n}\sum_{k=1}^n \tilde u^{(k)}, \quad Y^{(n)}=\frac{1}{n}\sum_{k=1}^n y^{(k)},\quad \tilde Y^{(n)}=\frac{1}{n}\sum_{k=1}^n \tilde y^{(k)},
$$
and let $X_n,\tilde X_n\in\mathbb{C}^N$ denote the complexifications of $U^{(n)},\tilde U^{(n)}\in\mathbb{R}^{2N}$ respectively. Then
\begin{equation}\setlength\abovedisplayskip{5pt}\setlength\belowdisplayskip{5pt}
\label{p3_key_bd0}
\left\|\left(X_n-\tilde X_n,Y^{(n)}-\tilde Y^{(n)}\right)^\top\right\|_{l^2}\!\leq \left[\sqrt{\frac{\tau_1+\tau_2}{1-\tau_1\tau_2 (\|{A}\|^2+\|B\|^2)}}\sqrt{\tau_1^{-1}+\tau_2^{-1}}\right]\frac{1}{n}\sum_{k=1}^n\sum_{j=1}^k\epsilon_j.
\end{equation}
Moreover, for any $\eta\geq 0$ and any feasible $x\in\mathbb{C}^{N}$ (i.e., $\|Ax-b\|_{l^2}\leq\epsilon$),
\begin{equation}\setlength\abovedisplayskip{5pt}\setlength\belowdisplayskip{5pt}
\label{p3_key_bd}
 \underbrace{\mathcal{J}(X_n)\! +\! \|BX_n\|_{l^1}\! -\! \mathcal{J}(x)\! -\! \|Bx\|_{l^1}\! +\! \eta \left(\|AX_n-b\|_{l^2}-\epsilon\right)}_{G_{\eta}(X_n,x,b)}\leq\frac{1}{n}\left(\frac{\|x_0-x\|_{l^2}^2}{\tau_1}+\frac{\eta^2+q}{\tau_2}\right).
\end{equation}
\end{theorem}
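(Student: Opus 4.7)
The plan is to prove the two bounds by separate arguments. The estimate \eqref{p3_key_bd} will follow from the classical Chambolle--Pock primal--dual gap inequality for ergodic averages, specialized by a well-chosen dual test-point; the stability estimate \eqref{p3_key_bd0} will follow from non-expansiveness of the exact primal--dual operator $\mathrm{PD}_{\tau_1,\tau_2}$ in an appropriate weighted metric together with a telescoping argument.

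For \eqref{p3_key_bd}, I would start from the standard Chambolle--Pock ergodic-average gap: under $\tau_1\tau_2\|K\|^2\leq 1$ (which holds here since $\|K\|^2\leq\|A\|^2+\|B\|^2$), one has
\begin{equation*}
\mathcal{L}(U^{(n)},y)-\mathcal{L}(u,Y^{(n)})\leq\tfrac{1}{n}\Bigl(\tfrac{\|u-u^{(0)}\|_{l^2}^2}{2\tau_1}+\tfrac{\|y-y^{(0)}\|_{l^2}^2}{2\tau_2}\Bigr)
\end{equation*}
for every admissible $(u,y)$. The key step is to pick $y^{\ast}=(y_1^{\ast},y_2^{\ast})$ so that $\mathcal{L}(U^{(n)},y^{\ast})$ reproduces exactly $\mathcal{J}(X_n)+\|BX_n\|_{l^1}+\eta(\|AX_n-b\|_{l^2}-\epsilon)$. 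Taking $y_1^{\ast}=\eta(K_1U^{(n)}-\hat b)/\|K_1U^{(n)}-\hat b\|_{l^2}$ (and zero if the denominator vanishes) and $y_2^{\ast}$ the entrywise complex sign of $K_2U^{(n)}$ achieves this, with $\|y^{\ast}\|_{l^2}^2\leq\eta^2+q$. To upper-bound $\mathcal{L}(u,Y^{(n)})$ by $\mathcal{J}(x)+\|Bx\|_{l^1}$, I use two facts: (i) feasibility of $x$ with Cauchy--Schwarz kills the $y_1$-contribution, $\langle K_1u-\hat b,Y^{(n)}_1\rangle-\epsilon\|Y^{(n)}_1\|_{l^2}\leq 0$; and (ii) each exact $y^{(k)}_2\in\mathcal{B}_\infty$ (via the $\varsigma_1$-projection), so $Y^{(n)}_2\in\mathcal{B}_\infty$ and $\langle K_2u,Y^{(n)}_2\rangle\leq\|Bx\|_{l^1}$. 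Combining with $y^{(0)}=0$ yields \eqref{p3_key_bd}, with the factor-of-two discrepancy absorbed by the paper's normalization convention for the gap.

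For \eqref{p3_key_bd0}, the decisive fact is that $\mathrm{PD}_{\tau_1,\tau_2}$ is non-expansive in
\begin{equation*}
\|(u,y)\|_M^2:=\tfrac{1}{\tau_1}\|u\|_{l^2}^2+\tfrac{1}{\tau_2}\|y\|_{l^2}^2-2\langle Ku,y\rangle,
\end{equation*}
which is a genuine norm when $\tau_1\tau_2\|K\|^2<1$; this is the Fej\'er monotonicity underlying Chambolle--Pock. Writing $z^{(k)}=(u^{(k)},y^{(k)})$ and $\tilde z^{(k)}=(\tilde u^{(k)},\tilde y^{(k)})$, the triangle inequality gives $\|\tilde z^{(k)}-z^{(k)}\|_M\leq\|\tilde z^{(k-1)}-z^{(k-1)}\|_M+\|\tilde z^{(k)}-\mathrm{PD}(\tilde z^{(k-1)})\|_M$, and iterating yields $\|\tilde z^{(k)}-z^{(k)}\|_M\leq\sum_{j=1}^k\|e_j\|_M$ with $e_j:=\tilde z^{(j)}-\mathrm{PD}(\tilde z^{(j-1)})$. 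Averaging over $k=1,\ldots,n$ and applying the triangle inequality bounds $\|\tilde Z^{(n)}-Z^{(n)}\|_M$ by $\frac{1}{n}\sum_{k=1}^n\sum_{j=1}^k\|e_j\|_M$. The stated prefactor then emerges from combining the equivalences $\|w\|_{l^2}^2\leq(\tau_1+\tau_2)(\tau_1^{-1}\|u\|^2+\tau_2^{-1}\|y\|^2)$, $\tau_1^{-1}\|u\|^2+\tau_2^{-1}\|y\|^2\leq(\tau_1^{-1}+\tau_2^{-1})\|w\|_{l^2}^2$, the two-sided sandwich $(1\mp\sqrt{\tau_1\tau_2}\|K\|)(\tau_1^{-1}\|u\|^2+\tau_2^{-1}\|y\|^2)$ around $\|(u,y)\|_M^2$, and $\|K\|^2\leq\|A\|^2+\|B\|^2$.

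The main obstacle I anticipate is bookkeeping of constants: pinning down exactly the prefactor in \eqref{p3_key_bd0} through the chain of $M$-vs-$l^2$ norm equivalences, and tracking factors through the complex-to-real conversion between $\mathbb{C}^N$ and $\mathbb{R}^{2N}$. Neither half of the argument is conceptually novel---both rely on established primal--dual machinery---but aligning the conventions of \cite{chambolle2011first,chambolle2016ergodic} with the specific $\eta^2+q$ structure on the right-hand side of \eqref{p3_key_bd} (where $q$ arises from the complex dimension of the $y_2$-block and $\eta^2$ from the $y_1$-penalty) requires care, as does the non-smoothness of the $\epsilon\|y_1\|_{l^2}$ term in $\mathcal{L}$ which must be handled without appeal to differentiability in the dual block.
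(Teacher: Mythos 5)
Your proposal is correct and follows essentially the same route as the paper: the gap bound \eqref{p3_key_bd} via the Chambolle--Pock ergodic estimate with the dual test point $y_1^{\ast}$ of norm $\eta$ parallel to $K_1U^{(n)}-\hat b$ and $y_2^{\ast}$ the complex sign of $K_2U^{(n)}$ (the paper justifies the latter via the Fenchel--Moreau biconjugate of $\|\cdot\|_{l^1_{\mathbb{C}}}$, and, like you, disposes of $\mathcal{L}(u,Y^{(n)})$ using feasibility plus Cauchy--Schwarz and $Y^{(n)}_2\in\mathcal{B}_\infty$), and the stability bound \eqref{p3_key_bd0} via non-expansiveness of $\mathrm{PD}_{\tau_1,\tau_2}$ in the $M$-norm, telescoping, and the $M$-vs-$l^2$ equivalences. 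The one detail to fix in your constant bookkeeping is that the symmetric sandwich $(1\mp\sqrt{\tau_1\tau_2}\|K\|)$ yields a prefactor up to a factor $1+\sqrt{\tau_1\tau_2}\|K\|\leq 2$ larger than stated; the paper instead applies AM--GM with the asymmetric weight $\nu=\tau_2\|K\|$ to obtain exactly $\|w\|^2_{\tau_1\tau_2}\leq(\tau_1^{-1}+\tau_2^{-1})\|w\|_{l^2}^2$ and $\|w\|_{l^2}^2\leq\frac{\tau_1+\tau_2}{1-\tau_1\tau_2\|K\|^2}\|w\|^2_{\tau_1\tau_2}$, which give the prefactor in \eqref{p3_key_bd0} verbatim.
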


\begin{proof}
Recall first the definition of $\mathcal{L}$ in \eqref{saddleiighujb}. Since $\tau_1\tau_2\|K\|^2\leq \tau_1\tau_2({\|A\|^2+\|B\|^2})< 1$, Theorem 1 and remark 2 of \cite{chambolle2016ergodic} show that for any $u\in\mathbb{R}^{2N}$ and any $y\in\mathbb{R}^{2m+2q}$,
\begin{equation}\setlength\abovedisplayskip{4pt}\setlength\belowdisplayskip{4pt}
\label{saddle_bound}
\mathcal{L}\left(U^{(n)},y\right)-\mathcal{L}\left(u,Y^{(n)}\right)\leq \frac{\|u_0-u\|_{l^2}^2}{n\tau_1}+\frac{\|y_0-y\|_{l^2}^2}{n\tau_2}.
\end{equation}
Let $x=x_1+ix_2$ be feasible and $y_1$ be parallel to $K_1U^{(n)}-\hat b$ with $\|y_1\|_{l^2}=\eta$. Writing out the difference on the left-hand side of \eqref{saddle_bound} and simplifying, we see that \eqref{saddle_bound} now yields
\begin{equation}\setlength\abovedisplayskip{4pt}\setlength\belowdisplayskip{4pt}\label{saddle_bound0}
\begin{split}
&\eta\!\left(\!\|K_1U^{(n)}-\hat b\|_{l^2}-\epsilon\!\right)+\mathcal{J}(X_n)-\langle K_1u-\hat b,Y^{(n)}_1\rangle-\mathcal{J}(x)+\epsilon\|Y^{(n)}_1\|_{l^2}\\
&\quad\quad\!+\langle K_2U^{(n)},y_2 \rangle-\chi_{\mathcal{B}_\infty}(y_2)-\langle K_2u,Y^{(n)}_2 \rangle+\chi_{\mathcal{B}_\infty}(Y^{(n)}_2)\leq \frac{\|u_0-u\|_{l^2}^2}{n\tau_1}\!+\!\frac{\eta^2+\|y_2\|_{l^2}^2}{n\tau_2}.
\end{split}
\end{equation}
Choose $y_2$ of complex $l^\infty$-norm one such that $\langle K_2U^{(n)},y_2 \rangle=\|BX_n\|_{l^1}.$ To see why this is possible, note that $\hat u\mapsto\|\hat u\|_{l^1_{\mathbb{C}}}$ is convex and lower semi-continuous. Hence, by the Fenchel--Moreau theorem, it is equal to its biconjugate and (after composing with a linear map)
\begin{equation}\setlength\abovedisplayskip{4pt}\setlength\belowdisplayskip{4pt}
\label{FM_theorem}
\|B\hat x\|_{l^1} = \sup_{y_2\in\mathbb{R}^{2q}} \langle K_2\hat u,y_2\rangle-\chi_{\mathcal{B}_\infty}(y_2).
\end{equation}
Since $\|y_2\|_{l^2}^2\leq q$, it follows that \eqref{saddle_bound0} reduces to
\begin{equation}\setlength\abovedisplayskip{4pt}\setlength\belowdisplayskip{4pt}\label{saddle_bound0000}
\begin{split}
&\eta\left(\|K_1U^{(n)}-\hat b\|_{l^2}-\epsilon\right)+\mathcal{J}(X_n)+\|BX_n\|_{l^1}-\mathcal{J}(x)\\
&\quad-\langle K_1u-\hat b,Y^{(n)}_1\rangle+\epsilon\|Y^{(n)}_1\|_{l^2}-\langle K_2u,Y^{(n)}_2 \rangle+\chi_{\mathcal{B}_\infty}(Y^{(n)}_2)\!\leq\! \frac{\|u_0-u\|_{l^2}^2}{n\tau_1}+\frac{\eta^2+q}{n\tau_2}.
\end{split}
\end{equation}
By the Cauchy--Schwartz inequality, since $\|K_1u-\hat b\|_{l^2}\leq\epsilon$ ($x$ is feasible), $
-\langle K_1u-\hat b,Y^{(n)}_1\rangle+\epsilon\|Y^{(n)}_1\|_{l^2}\geq 0.$ Moreover, since $\chi_{\mathcal{B}_\infty}(Y^{(n)}_2)$ must be finite and using \eqref{FM_theorem}, $-\langle K_2u,Y^{(n)}_2 \rangle\geq -\|Bx\|_{l^1}.$ Hence \eqref{saddle_bound0000} reduces to \eqref{p3_key_bd} upon complexification and it suffices to prove \eqref{p3_key_bd0}.

Let $v=(u,y)^{\top}$, and define the matrix (acting on the vectorized form of the variables)
$$\setlength\abovedisplayskip{5pt}\setlength\belowdisplayskip{5pt}
M_{\tau_1\tau_2}=\begin{pmatrix}
\frac{1}{\tau_1}I & -K^*\\
-K & \frac{1}{\tau_2}I
\end{pmatrix}\in\mathbb{R}^{2(m+q+N)\times 2(m+q+N)}.
$$
$M_{\tau_1\tau_2}$ is positive definite by the assumption $\tau_1\tau_2 ({\|{A}\|^2+\|B\|^2})<1$, and hence induces a norm denoted by $\|\cdot\|_{\tau_1\tau_2}$. We can write the iterations defined by $\mathrm{PD}_{\tau_1,\tau_2}$ as (see \cite[Sec.\ 3]{chambolle2016ergodic})
$$\setlength\abovedisplayskip{5pt}\setlength\belowdisplayskip{5pt}
0\in \mathcal{T}v^{(k+1)}+(v^{(k+1)}-v^{(k)}), \quad \text{with}\quad\mathcal{T}:=M_{\tau_1\tau_2}^{-1}\begin{pmatrix}
\partial j & K^* \\
-K & \partial h^* 
\end{pmatrix},
$$
where $h^*(y)=\epsilon\|y_1\|_{l^2}+\langle \hat b,y_1\rangle+\chi_{\mathcal{B}_\infty}(y_2)$. It follows that
$$\setlength\abovedisplayskip{5pt}\setlength\belowdisplayskip{5pt}
v^{(k+1)}=\left[I+\mathcal{T}\right]^{-1}v^{(k)}.
$$
The multi-valued operator $\mathcal{T}$ is maximal monotone with respect to the inner product induced by $M_{\tau_1\tau_2}$ \cite{rockafellar1976monotone}, and hence the iterations are non-expansive in the norm $\|\cdot\|_{\tau_1\tau_2}$.

We have that
$$\setlength\abovedisplayskip{5pt}\setlength\belowdisplayskip{5pt}
\|(u,y)^{\top}\|^2_{\tau_1\tau_2}\leq\frac{\|u\|_{l^2}^2}{\tau_1}+\frac{\|y\|_{l^2}^2}{\tau_2}+2\|{K}\|\|u\|_{l^2}\|y\|_{l^2}\leq \left(\frac{\|{K}\|}{\nu}+\tau_1^{-1}\right)\|u\|_{l^2}^2+\left(\|{K}\|\nu+\tau_2^{-1}\right)\|y\|_{l^2}^2,
$$
for any $\nu> 0$ by the AM--GM inequality. Choosing $\nu=\tau_2 \|{K}\|$ and using $\tau_1\tau_2 \|{K}\|^2<1$,
\begin{equation}\setlength\abovedisplayskip{5pt}\setlength\belowdisplayskip{5pt}
\label{equiv_norm1}
\|(u,y)^{\top}\|^2_{\tau_1\tau_2}\leq(\tau_1^{-1}+\tau_2^{-1})\|(u,y)^{\top}\|_{l^2}^2.
\end{equation}
A similar calculation yields that
\begin{equation}\setlength\abovedisplayskip{5pt}\setlength\belowdisplayskip{5pt}
\label{equiv_norm2}
\|(u,y)^{\top}\|_{l^2}^2\leq \frac{\tau_1+\tau_2}{1-\tau_1\tau_2 \|{K}\|^2}\|(u,y)^{\top}\|^2_{\tau_1\tau_2}.
\end{equation}
It follows that
\begin{align*}\setlength\abovedisplayskip{5pt}\setlength\belowdisplayskip{5pt}
&\|\!(u^{(k)},y^{(k)})^{\top}-(\tilde u^{(k)},\tilde y^{(k)})^{\top}\!\|_{\tau_1\tau_2}\!\!\leq \|(\tilde u^{{(k)}},\tilde y^{(k)})^{\top}-\mathrm{PD}_{\tau_1,\tau_2}(\tilde u^{(k-1)},\tilde y^{(k-1)})^{\top}\|_{\tau_1\tau_2}\\
&\quad\quad\quad\quad\quad\quad\quad\quad\quad\quad\quad\quad\quad\quad+\|\mathrm{PD}_{\tau_1,\tau_2}(u^{(k-1)},y^{(k-1)})^{\top}-\mathrm{PD}_{\tau_1,\tau_2}(\tilde u^{(k-1)},\tilde y^{(k-1)})^{\top}\|_{\tau_1\tau_2}\\
&\quad\leq \|(\tilde u^{{(k)}},\tilde y^{(k)})^{\top}-\mathrm{PD}_{\tau_1,\tau_2}(\tilde u^{(k-1)},\tilde y^{(k-1)})^{\top}\|_{\tau_1\tau_2} +\|(u^{(k-1)},y^{(k-1)})^{\top}-(\tilde u^{(k-1)},\tilde y^{(k-1)})^{\top}\|_{\tau_1\tau_2}\\
&\quad\leq \epsilon_k\sqrt{\tau_1^{-1}+\tau_2^{-1}}\!+\!\|\!(u^{(k-1)},y^{(k-1)})^{\top}\!-\!(\tilde u^{(k-1)},\tilde y^{(k-1)})^{\top}\!\|_{\tau_1\tau_2},
\end{align*}
where we have used the triangle inequality in the first inequality and the fact that the iterates are non-expansive in $\|\cdot\|_{\tau_1\tau_2}$ in the second inequality. Iterating and using \eqref{equiv_norm2}, we have
$$\setlength\abovedisplayskip{5pt}\setlength\belowdisplayskip{5pt}
\|(u^{(k)},y^{(k)})^{\top}-(\tilde u^{(k)},\tilde y^{(k)})^{\top}\|_{l^2}\leq \sqrt{\frac{\tau_1+\tau_2}{1-\tau_1\tau_2 \|{K}\|^2}}\sqrt{\tau_1^{-1}+\tau_2^{-1}}\sum_{j=1}^k\epsilon_j.
$$
Since $\|K\|^2\leq\|A\|^2+\|B\|^2$, this proves \eqref{p3_key_bd0} and hence finishes the proof of the theorem.
\end{proof}

\subsection{The restart scheme}
\label{theory2}

With \cref{approx_PD_alg} in hand, we can now describe the accelerated scheme. The idea is to take advantage of the different orders of positive homogeneity on either side of \eqref{p3_key_bd}. Together with \eqref{assumption}, this allows a decrease in the relevant gap $G_{\eta}$ (defined in \eqref{def:GGG}) by a constant factor for a fixed number of iterations. For convenience, define
$$\setlength\abovedisplayskip{5pt}\setlength\belowdisplayskip{5pt}
\widehat{\mathcal{J}}(x):=\mathcal{J}(x)+\|Bx\|_{l^1}.
$$
We begin by describing three steps used to obtain a key inequality \eqref{ProofC2}.

\vspace{1mm}

\textbf{Step 1:} First, consider the iterations described in \cref{approx_PD_alg} with $n=k$, but with rescaled input $b/\beta$ and $x_0/\beta$, and $\epsilon$ in the theorem rescaled to $\epsilon/\beta$ for a given $k \in \mathbb{N}$, and $\beta>0$ (both of which are explicitly defined below). We assume that each of the $\epsilon_j\leq \mu$ for some $\mu>0$ and denote the corresponding map (the computed $\tilde X_{k}$) as
$$\setlength\abovedisplayskip{5pt}\setlength\belowdisplayskip{5pt}
\Psi_k=\Psi_k\left(\frac{b}{\beta}, \frac{x_0}{\beta},\frac{\epsilon}{\beta}\right).
$$
\cref{approx_PD_alg} ensures the existence of a vector $\psi_k$ (the exact iterates $ X_{k}$) satisfying
\begin{equation}\setlength\abovedisplayskip{5pt}\setlength\belowdisplayskip{5pt}
\label{needed0}
\left\| \psi_{k} - \Psi_k  \right\|_{l^2}  \leq \left[\sqrt{\frac{\tau_1+\tau_2}{1-\tau_1\tau_2 (\|{A}\|^2+\|B\|^2)}}\sqrt{\tau_1^{-1}+\tau_2^{-1}}\right]\frac{(k+1)\mu}{2},
\end{equation}
(where we have computed the double sum in \eqref{p3_key_bd0}) along with the following bound (where we take $\eta=C_2$) for any feasible $x \in \mathbb{C}^{N}$:
\begin{equation}\setlength\abovedisplayskip{5pt}\setlength\belowdisplayskip{5pt}
\label{eq:scaled}
\widehat{\mathcal{J}}(\psi_{k}) -\widehat{\mathcal{J}}\left(\frac{x}{\beta} \right) +C_2\left(\left\| {A}\psi_k - \frac{b}{\beta}\right\|_{l^2} - \frac{\epsilon}{\beta}\right) 
\leq \frac{1}{k}\left( \frac{\|x-x_0\|_{l^2}^2}{\beta^2\tau_1} + \frac{C_2^2+q}{\tau_2}\right).
\end{equation}

\vspace{1mm}

\textbf{Step 2:} We now seek to convert the bound \eqref{eq:scaled} to a corresponding bound for $\Psi_{k}$ instead of $\psi_{k}$. Given $L\geq \sqrt{\|A\|^2+\|B\|^2}$ and $\tau\in(0,1)$, we choose $\tau_1^{-1}=\tau_2^{-1}\leq \tau^{-1}L$ with $\tau^{-1}L\lesssim\tau_1^{-1}$. Using \eqref{needed0}, we have that
$$\setlength\abovedisplayskip{5pt}\setlength\belowdisplayskip{5pt}
\widehat{\mathcal{J}}(\Psi_{k})\leq \widehat{\mathcal{J}}(\psi_{k})+\widehat{\mathcal{J}}(\Psi_{k}-\psi_{k})\leq \widehat{\mathcal{J}}(\psi_{k})+\left\|\widehat{\mathcal{J}}\right\|\left[\sqrt{\frac{\tau_1+\tau_2}{1-\tau_1\tau_2 L^2}}\sqrt{\tau_1^{-1}+\tau_2^{-1}}\right]\frac{(k+1)\mu}{2}.
$$
Since $\|A\Psi_{k}-A\psi_{k}\|\leq L\| \psi_{k} - \Psi_k  \|_{l^2}$, it follows after some tedious but straightforward calculations that a suitable $\mu$ can be chosen with the scaling
\begin{equation}\setlength\abovedisplayskip{5pt}\setlength\belowdisplayskip{5pt}
\label{stab_bound}
\mu\sim L(C_2^2+q){\sqrt{1-\tau^2}}({k^2\tau (\|\widehat{\mathcal{J}}\|+C_2L)})^{-1}, \quad \text{such that}
\end{equation}
\begin{equation}\setlength\abovedisplayskip{5pt}\setlength\belowdisplayskip{5pt}
\label{eq:scaled00}
\widehat{\mathcal{J}}(\Psi_{k}) -\widehat{\mathcal{J}}\left(\frac{x}{\beta} \right) +C_2\left(\left\| {A}\Psi_k - \frac{b}{\beta}\right\|_{l^2} - \frac{\epsilon}{\beta}\right) 
\leq \frac{L}{k\tau}\left( \frac{\|x-x_0\|_{l^2}^2}{\beta^2} + 2(C_2^2+q)\right).
\end{equation}

\vspace{1mm}

\textbf{Step 3:} Finally, we rescale the bound \eqref{eq:scaled00}. Define the map $H_k^\beta:\mathbb{C}^{m}\times\mathbb{C}^{N}\rightarrow \mathbb{C}^{N}$ by
$$\setlength\abovedisplayskip{5pt}\setlength\belowdisplayskip{5pt}
H_{k}^\beta(b,x_0)=\beta\cdot\Psi_k\left(\frac{b}{\beta},\frac{x_0}{\beta},\frac{\epsilon}{\beta}\right).
$$
Multiplying \eqref{eq:scaled00} by $\beta$ and using that the seminorm $\widehat{\mathcal{J}}$ is positive homogenous of degree $1$, 
$$\setlength\abovedisplayskip{5pt}\setlength\belowdisplayskip{5pt}
\widehat{\mathcal{J}}\left(H_{k}^\beta(b,x_0)\right) -\widehat{\mathcal{J}}\left({x} \right) +C_2\left(\left\| {A}H_{k}^\beta(b,x_0) - b\right\|_{l^2} - {\epsilon}\right)\leq \frac{L}{k\tau}\left(\frac{\|x-x_0\|_{l^2}^2}{\beta}+2{\beta}(C_2^2+q)\right).
$$
Upon combining this with \eqref{assumption} to bound $\|x-x_0\|_{l^2}^2$, we obtain the key inequality
\begin{equation}\setlength\abovedisplayskip{5pt}\setlength\belowdisplayskip{5pt}
\label{ProofC2}
G_{C_2}\left(H_{k}^\beta(b,x_0),x,b\right)\leq \frac{LC_1^2}{\tau k\beta}\left[c(x,b)+G_{C_2}(x_0,x,b)\right]^2+\frac{2{L}{\beta}}{k}(C_2^2+q)\tau^{-1}.
\end{equation}

\vspace{1mm}

Our algorithm takes advantage of this inequality for each restart. The full algorithm is described in \cref{alg:RR_primal_dual} (the inner iterations are described in \cref{alg:inner_iterations}), where, for simplicity, we have taken $\mu=0$ corresponding to exact primal-dual iterations. \cref{main_theorem} summarizes the convergence result, and the proof shows how to choose optimal $k$ and $\beta$.

\begin{algorithm}[t!]
\textbf{Input:} Data $b\in\mathbb{C}^m$, initial vector $x_0\in\mathbb{C}^N$, function handles for $A$, $A^*$, $B$ and $B^*$, number of iterations $k\in\mathbb{N}$, proximal step sizes $\tau_1>0$ and $\tau_2>0$, $\epsilon>0$, and seminorm $\mathcal{J}$. \\
\vspace{-4mm}
\begin{algorithmic}[1]
\STATE Initiate with $x^{(0)}=x_0$, $z_1^{(0)}=0\in\mathbb{C}^m$, $z_2^{(0)}=0\in\mathbb{C}^q$ and $X_0=0$.
\STATE For $j=0,...,k-1$ compute
\begin{equation*}\setlength\abovedisplayskip{5pt}\setlength\belowdisplayskip{5pt}
\begin{split}
x^{(j+1)}&=\mathrm{prox}_{\tau_1 \mathcal{J}}\left(x^{(j)}-\tau_1 A^* z_1^{(j)}-\tau_1 B^* z_2^{(j)}\right)\\
z_1^{(j+1)}&=\gamma_{\tau_2\epsilon}\!\left(z_1^{(j)}+\tau_2 A (2x^{(j+1)}-x^{(j)})-\tau_2 b\right)\!,\quad z_2^{(j+1)}=\varsigma_1\!\left(z_2^{(j)}+\tau_2 B (2x^{(j+1)}-x^{(j)})\right)\!,
\end{split}
\end{equation*}
and update the ergodic average $X_{j+1}=\frac{1}{j+1}\left(jX_{j}+x^{(j+1)}\right).$
\end{algorithmic} \textbf{Output:} $\texttt{InnerIt}\left(b,x_0,A,B,k,\tau_1, \tau_2,\epsilon,\mathcal{J}\right)=X_k.$
\caption{Inner iterations of primal-dual updates \eqref{eq:iterCP} written in complex form. The $z^{(j)}$ correspond to the complexification of the real dual vectors in \eqref{eq:iterCP}, $\gamma_\rho$ and $\varsigma_\rho$ are defined in \eqref{simple_function1}. At any one time, only the current ergodic average, two primal and four dual variables need to be stored.}\label{alg:inner_iterations}
\end{algorithm}

\begin{algorithm}[t!]
\textbf{Input:} $C_1$ and $C_2$ such that \eqref{assumption} holds, $L$ (upper bound for $\sqrt{\|A\|^2+\|B\|^2}$), $\tau\in(0,1)$, $\nu\in(0,1)$, $\epsilon>0$, $\delta>0$, function handles for $A$, $A^*$, $B$ and $B^*$, seminorm $\mathcal{J}$ and data $b\in\mathbb{C}^m$. \\
\vspace{-4mm}
\begin{algorithmic}[1]
\STATE Set $\epsilon_0=C_2\|b\|_{l^2}$. For $j=1,...,n-1$ compute $\epsilon_j=\upsilon\left(\delta+\epsilon_{j-1}\right)$.
\STATE Set $
k=\left\lceil\frac{2LC_1\sqrt{C_2^2+q}}{\upsilon\tau}\right\rceil,\quad \beta_j=\frac{C_1(\delta+\epsilon_{j-1})}{\sqrt{C_2^2+q}},\quad \text{for $j=1,...,n$ (recall that $B\in\mathbb{C}^{q\times N}$).}
$
\STATE Set $\phi_0(b)=0$ (or any other initial approximation) and for $j=1,...,n$, compute
\begin{equation}\setlength\abovedisplayskip{5pt}\setlength\belowdisplayskip{5pt}
\label{RRPD}
\phi_j(b)=\beta_j\cdot\texttt{InnerIt}\left(\frac{b}{\beta_{j}},\frac{\phi_{j-1}(b)}{\beta_{j}},A,B,k,\tau L^{-1}, \tau L^{-1},\frac{\epsilon}{\beta_j},\mathcal{J}\right).
\end{equation}
\end{algorithmic} \textbf{Output:} $\phi_n(b)\in\mathbb{C}^N$.
\caption{\textbf{WARPd:} Accelerated algorithm for the solution of \eqref{main_problem} and recovery of desired vector. We have removed the factors of $\sqrt{2}$ in the definition of $k$ and $\beta_j$ in the proof of \cref{main_theorem} (this is equivalent to taking $\mu=0$). The updates in \eqref{RRPD} correspond to restarted and reweighted primal-dual iterations (performed by the routine $\texttt{InnerIt}$).}\label{alg:RR_primal_dual}
\end{algorithm}

\begin{proof}[Proof of \cref{main_theorem}]
First, we specify the choices of $k$ and $\beta$. Recall that we restrict to $(\varkappa,b)$ with $c(\varkappa,b)\leq \delta$. Suppose that $G_{C_2}(x_0,\varkappa,b)\leq \epsilon_0$. Since $\varkappa$ is feasible, \eqref{ProofC2} becomes
$$\setlength\abovedisplayskip{5pt}\setlength\belowdisplayskip{5pt}
G_{C_2}\left(H_{k}^\beta(b,x_0),\varkappa,b\right)\leq \frac{LC_1^2}{\tau k\beta}(\delta+\epsilon_0)^2+\frac{2{L}{\beta}}{k}(C_2^2+q)\tau^{-1}.
$$
Optimizing the right-hand side leads to the choice $\beta={C_1(\delta+\epsilon_0)}/{\sqrt{2(C_2^2+q)}}$ and
$$\setlength\abovedisplayskip{5pt}\setlength\belowdisplayskip{5pt}
G_{C_2}\left(H_{k}^\beta(b,x_0),\varkappa,b\right)\leq \frac{2LC_1\sqrt{2(C_2^2+q)}}{\tau k}(\delta+\epsilon_0).
$$
For a given $\upsilon\in(0,1)$, we define
$$\setlength\abovedisplayskip{5pt}\setlength\belowdisplayskip{5pt}
k(\upsilon,\tau)=\left\lceil{2LC_1\sqrt{2(C_2^2+q)}}/({\upsilon\tau})\right\rceil,\quad \beta(\upsilon,\tau,\epsilon_0)={C_1(\delta+\epsilon_0)}/{\sqrt{2(C_2^2+q)}}.
$$
This ensures that $G_{C_2}\left(H_{k}^\beta(b,x_0),\varkappa,b\right)\leq \upsilon\left(\delta+\epsilon_0\right)
$ whenever $G_{C_2}(x_0,\varkappa,b)\leq \epsilon_0$.

We are now ready to describe the restart scheme. Note that $
G_{C_2}(0,\varkappa,b)\leq C_2\|b\|_{l^2}$. Given $n\in\mathbb{N}$, set $\epsilon_0=C_2\|b\|_{l^2}$ and for $j=1,...,n-1$ set $\epsilon_j=\upsilon\left(\delta+\epsilon_{j-1}\right).$ By summing a geometric series, this implies $
\epsilon_n\leq \frac{\upsilon\delta}{1-\upsilon}+\upsilon^nC_2\|b\|_{l^2}.$ We define $\phi_n(b)$ iteratively as follows. We set
$$\setlength\abovedisplayskip{5pt}\setlength\belowdisplayskip{5pt}
\phi_1(b)=H_{k(\upsilon,\tau)}^{\beta(\upsilon,\tau,\epsilon_0)}(b,0),\quad\phi_{j}(b)=H_{k(\upsilon,\tau)}^{\beta(\upsilon,\tau,\epsilon_{j-1})}(b,\phi_{j-1}(b))\text{ for }j=2,...,n.
$$
The choice of $\epsilon_j$ and the above argument inductively shows that $G_{C_2}\left(\phi_{j}(b),\varkappa,b\right)\leq\epsilon_j$. Hence, $G_{C_2}\left(\phi_{n}(b),\varkappa,b\right)\leq\epsilon_n\leq \frac{\upsilon\delta}{1-\upsilon}+\upsilon^nC_2\|b\|_{l^2}.$ Combining with \eqref{assumption}, we see that \eqref{key_theorem_bound} holds.
\end{proof}

For $T= kn$ inner iterations (as $\tau\uparrow 1$) and the choice of $k$ in \cref{alg:RR_primal_dual}, the error term $
\upsilon^n=\exp(Tk^{-1}\log(\upsilon))$ is equal to $\exp(T\lceil{2LC_1\sqrt{C_2^2+q}}/{\upsilon}\rceil^{-1}\log(\upsilon)).
$ If we ignore the ceiling function, the optimal choice $\upsilon=e^{-1}$ is found via differentiation. This choice yields
\begin{equation}\setlength\abovedisplayskip{5pt}\setlength\belowdisplayskip{5pt}
\label{conv_rate99}
\upsilon^n= \exp\left(-T\left\lceil{2eLC_1\sqrt{C_2^2+q}}\right\rceil^{-1}\right)
\end{equation}
and linear convergence in the total number of inner iterations $T$. Suppose that we want $\upsilon^nC_2\|b\|_{l^2}\sim\delta$ in order to balance the two terms on the right-hand side of \eqref{key_theorem_bound}, then
$$\setlength\abovedisplayskip{5pt}\setlength\belowdisplayskip{5pt}
T\sim LC_1\sqrt{C_2^2+q}\cdot\log\left({C_2\|b\|_{l^2}}/{\delta}\right),
$$
which only grows logarithmically with the precision $\delta^{-1}$, as stated after \cref{main_theorem}.

\subsection{Computational complexity and remarks}
\label{sec:complexity}

Let $C_{A}$, $C_{A^*}\!$, $C_B$ and $C_{B^*}$ denote the computational cost of applying $A$, $A^*\!\!$, $B$ and $B^*$ respectively, and let $C_\mathcal{J}$ denote the cost of applying the proximal map of $\mathcal{J}$. The cost per inner iteration of \cref{alg:RR_primal_dual} is
$$\setlength\abovedisplayskip{5pt}\setlength\belowdisplayskip{5pt}
C_{A}+C_{A^*}+C_{B}+C_{B^*}+C_\mathcal{J}+\mathcal{O}(m+N+q),
$$
since applying $\gamma_\rho$ and $\varsigma_1$ involves only vector operations. For simple $\mathcal{J}$, such as those considered in \cref{application1}, $C_\mathcal{J}=\mathcal{O}(N)$. In compressed sensing applications, it is common for $A$ to be a submatrix of a (rescaled) unitary operator that admits a fast transform for matrix-vector products. Similarly, in $l^1$-analysis problems, $B$ and $B^*$ often admit fast transforms. In this case, the cost per iteration is bounded by a small multiple of $N$ (and possibly logarithmic terms). Hence, each iteration is extremely fast. In the more general case, such as the nuclear norm in \cref{application2,sec:mat_com_exm}, where a singular value decomposition needs to be computed to apply $\mathrm{prox}_{\tau_1 \mathcal{J}}$, $C_\mathcal{J}$ can be larger than $\mathcal{O}(N)$. However, the algorithm is still scalable to large problems and competitive with state-of-the-art methods (see \cref{sec_low_rk_num,sec:mat_num_comp}).

After applying the rescaling in \cref{alg:RR_primal_dual}, the {\textit{relative error}} bound needed for the primal dual iterations in our algorithm scales no worse than ${\delta}({LC_1\sqrt{C_2^2+q}(\|\widehat{\mathcal{J}}\|+C_2L)})^{-1}.$ This is useful in scenarios where the proximal map of $\mathcal{J}$ can only be applied approximately. Moreover, in certain cases, we may not know the matrices $A$ or $B$ exactly, or they have been stored to a finite precision. We can absorb this additional error into the error bounds for inexact computation in \cref{approx_PD_alg}. In a similar fashion, all of the algorithms in this paper can be executed on a Turing machine with almost identical error bounds. This is important for the computability of solutions of \cref{main_problem} to a given accuracy (e.g., see \cite{opt_big} and its numerical experiments). However, we have taken the usual convention throughout of proving results in exact arithmetic and providing stability bounds such as \eqref{stab_bound}.

In the following sections, we discuss how to select the constants $C_1$ and $C_2$ in different scenarios. For cases where $\|A\|$ and $\|B\|$ are unknown, we use the power method (applied to $A^*A$ and $B^*B$) to find a suitable $L$. This computation incurs a one-off upfront cost which is usually only as expensive as a few iterations of \texttt{InnerIt}. Practically, we found that \cref{alg:RR_primal_dual} performed better if the initial dual variables in \texttt{InnerIt} were selected as the final dual variables of the previous operation of \texttt{InnerIt} (as opposed to zero). \cref{approx_PD_alg} can be adapted accordingly by bounding the dual variables (the only change is to the final term on the right-hand side of \eqref{p3_key_bd}). We omit the details and instead provide an alternative technique in \cref{sqre_root}, where the dual variables are bounded using the dual of a data fitting term. Before discussing this technique, we prove \cref{main_theorem_NN}.

\subsection{Unrolling \cref{alg:RR_primal_dual} as a stable and accurate NN}
\label{sec:NN_unn}
To capture standard architectures used in practice, such as skip connections, we consider the following definition of a NN. Without loss of generality and for ease of exposition, we also work with complex-valued NNs. Real-valued NNs can realize such networks by splitting into real and imaginary parts. A NN is a mapping $\phi \colon \mathbb{C}^{m}\rightarrow\mathbb{C}^{N}$ that can be written as a composition
$$\setlength\abovedisplayskip{5pt}\setlength\belowdisplayskip{5pt}
\phi(y)=[V_T\circ\rho_{T-1}\circ V_{T-1}\circ\cdots \circ V_2\circ\rho_1\circ V_1](y),\quad \text{where:}
$$
\begin{itemize}[leftmargin=15pt]
	\item Each $V_j$ is an affine map $\mathbb{C}^{N_{j-1}}\rightarrow\mathbb{C}^{N_{j}}$ given by $V_j(x)=W_jx+b_j(y)$ where $W_j\in\mathbb{C}^{N_{j}\times N_{j-1}}$ and $b_j(y)=R_jy+c_j$ are affine functions of the input $y$.
	\item Each $\rho_j\colon \mathbb{C}^{N_j}\rightarrow \mathbb{C}^{N_j}$ is one of two forms:	
	\begin{enumerate}[leftmargin=9pt]
		\item[(i)] There exists an index set $I_j\subset\{1,...,N_j\}$ such that $\rho_j$ applies a non-linear function $f_j:\mathbb{C}\rightarrow\mathbb{C}$ element-wise on the input vector's components with indices in $I_j$:
		$$\setlength\abovedisplayskip{5pt}\setlength\belowdisplayskip{5pt}
		\rho_j(x)_k=\begin{dcases}
f_j(x_k),&\text{if }k\in I_j\\
x_k,&\text{otherwise}.
\end{dcases}
		$$
		\item[(ii)] There exists a function $f_j:\mathbb{C}\rightarrow\mathbb{C}$ such that, after decomposing the input vector $x$ as $(x_0,X^{\top},Y^{\top})^{\top}$ for scalar $x_0$ and $X\in\mathbb{C}^{m_j},Y\in\mathbb{C}^{N_j-1-m_j}$, we have
		\begin{equation*}\setlength\abovedisplayskip{5pt}\setlength\belowdisplayskip{5pt}
		\rho_j:\begin{pmatrix}
x_0\\
X\\
Y
\end{pmatrix}\rightarrow\begin{pmatrix}
0\\
{f}_j(x_0)X\\
Y
\end{pmatrix}.
	\end{equation*}
	\end{enumerate}
\end{itemize}
The affine dependence of $b_j(y)$ on $y$ allows skip connections from the input to the current level as in definitions of feed-forward NNs \cite[p. 269]{shalev2014understanding}, and the above architecture has become standard \cite{jin17, hammernik2018learning,devore2020neural}. The use of non-linear functions of the form (ii) may be re-expressed using non-linear functions of the form (i) and the following standard element-wise squaring trick:
$$\setlength\abovedisplayskip{5pt}\setlength\belowdisplayskip{5pt}
{f}_j(x_0)X=\frac{1}{2} \left[[f_j(x_0)\textbf{1}+X]^2\! -\! f_j(x_0)^2\textbf{1}\! -\! X^2\right],
$$
where $\textbf{1}$ denotes a vector of ones of the same size as $X$. The key observation is that the basic operations of \cref{alg:inner_iterations} can be unrolled as NNs. For example, $x\mapsto \gamma_\rho(x)$ can be executed via (L denotes affine maps and NL non-linear maps)
\begin{equation*}\setlength\abovedisplayskip{5pt}\setlength\belowdisplayskip{5pt}
\begin{split}
\! x\! \! \xrightarrow[]{\text{L}}\!  \begin{pmatrix}
 x \\
 x 
\end{pmatrix} \! \! \xrightarrow[]{\text{NL}}\! \begin{pmatrix}
\! |x_1|^2\! \! \\
\! \vdots\! \! \\
\! |x_{m}|^2\! \! \\
\! x\! \! 
\end{pmatrix}\! \! \xrightarrow[]{\text{L}}\! \begin{pmatrix}
\sum_{j=1}^{M}|x_j|^2\\
x
\end{pmatrix}\!\!  \xrightarrow[]{\text{NL}}\! 
\begin{pmatrix}
\!  0 \! \\
\!  \max\left\{0,1\! -\! \frac{\rho}{\|x\|_{l^2}}\right\}x\!  
\end{pmatrix}\!\!  \xrightarrow[]{\text{L}}\! 
\max\! \left\{0,1\! -\! \frac{\rho}{\|x\|_{l^2}}\right\}\! x.
\end{split}
\end{equation*}
The second arrow applies pointwise modulus squaring (type (i) above), and the penultimate arrow applies a non-linear map (type (ii) above). Similarly, $\varsigma_\rho$ (as well as $\vartheta$ from \cref{alg:inner_iterationsB}) can be unrolled as NNs of a fixed depth and width of order $\mathcal{O}(m+N+q)$.

\begin{proof}[Proof of \cref{main_theorem_NN}]
Under the assumptions, we see that each iteration in \cref{alg:inner_iterations} (now with the appropriate change of parameters to encompass inexact primal-dual iterates as in the proof of \cref{main_theorem}) can be executed by a NN of width $\mathcal{O}(m+N+q)$ and depth $\mathcal{O}(M)$. This follows via the unrolling of $\gamma_\rho$ and $\varsigma_\rho$, the approximation of the proximal map of $\mathcal{J}$ and concatenation of NNs. Similarly, the rescalings and operations in \cref{alg:RR_primal_dual} can be combined into a NN. The result now follows from \cref{main_theorem} and the analysis after that shows that $\mathcal{O}(LC_1\sqrt{C_2^2+q}\cdot\log\left({C_2\|b\|_{l^2}}/{\delta}\right))$ inner iterations are required to achieve \eqref{key_theorem_bound_NN}.
\end{proof}

\subsection{Noise-blind recovery: Replacing the constraint with a data fitting term}
\label{sqre_root}

We now discuss a variation of WARPd based on the following \textit{unconstrained} optimization problem
\begin{equation}\setlength\abovedisplayskip{5pt}\setlength\belowdisplayskip{5pt}
\label{main_problem2}
\min_{x\in \mathbb{C}^N} \lambda\Big[\mathcal{J}(x)+\|Bx\|_{l^1}\Big]+\|Ax-b\|_{l^2} \quad \text{(with $\lambda>0$)}.
\end{equation}
The optimization problem in \eqref{main_problem2} differs from its LASSO-type cousin by replacing the conventional $\|Ax-b\|_{l^2}^2$ term with $\|Ax-b\|_{l^2}$. In the case of sparse recovery ($B=0$ and $\mathcal{J}(x)=\|x\|_{l^1_w}$ - see \cref{application1}), this is known as the Square-Root LASSO (SR-LASSO) decoder. It was introduced in \cite{belloni2011square}, see also \cite{belloni2014pivotal,adcock2019correcting}. In particular, SR-LASSO allows an optimal parameter choice for $\lambda$ that is \textit{independent of the noise level} \cite[Table 6.1]{adcock2021compressive} and is therefore well suited to noise-blind recovery problems. This property also holds for the algorithm we describe, WARPdSR. Moreover, there is an additional benefit. SR-LASSO allows an elegant bound on the size of dual variables, and hence allows an easier analysis with additional dual variable restarts (see discussion at the end of \cref{sec:complexity}).

\begin{algorithm}[t!]
\textbf{Input:} Data $b\in\mathbb{C}^m$, initial vector $x_0\in\mathbb{C}^N$, initial dual vector $z_0\in\mathbb{C}^{m+q}$, function handles for $A$, $A^*$, $B$ and $B^*$, number of iterations $k\in\mathbb{N}$, proximal step sizes $\tau_1>0$ and $\tau_2>0$, $\lambda>0$, and seminorm $\mathcal{J}$. \\
\vspace{-4mm}
\begin{algorithmic}[1]
\STATE Initiate with $x^{(0)}=x_0$, $z^{(0)}=z_0=(z_{1}^{(0)},z_{2}^{(0)})$, $X_0=0\in\mathbb{C}^N$ and $Z_0=0\in\mathbb{C}^{m+q}$.
\STATE For $j=0,...,k-1$ compute
\begin{equation*}\setlength\abovedisplayskip{5pt}\setlength\belowdisplayskip{5pt}
\begin{split}
x^{(j+1)}&=\mathrm{prox}_{\lambda\tau_1 \mathcal{J}}\left(x^{(j)}-\tau_1 A^* z_1^{(j)}-\tau_1 B^* z_2^{(j)}\right)\\
z_1^{(j+1)}&=\vartheta\left(z_1^{(j)}+\tau_2 A (2x^{(j+1)}-x^{(j)})-\tau_2 b\right),\quad z_2^{(j+1)}=\varsigma_{\lambda}\!\left(z_2^{(j)}+\tau_2 B (2x^{(j+1)}-x^{(j)})\right)\!
\end{split}
\end{equation*}
and update the ergodic averages
$$\setlength\abovedisplayskip{5pt}\setlength\belowdisplayskip{5pt}
X_{j+1}=\frac{1}{j+1}\left(jX_{j}+x^{(j+1)}\right), \quad Z_{j+1}=\frac{1}{j+1}\left(jZ_{j}+z^{(j+1)}\right).
$$
\end{algorithmic} \textbf{Output:} $\texttt{InnerItSR}\left(b,x_0,z_0,A,B,k,\tau_1, \tau_2,\lambda,\mathcal{J}\right)=(X_k,Z_k).$
\caption{Inner iterations of primal-dual updates. The $z^{(j)}$ correspond to the complexification of the dual vectors, $\vartheta(y):=\min\{1,{\|y\|_{l^2}}^{-1}\}y$ and $\varsigma_\rho$ is defined in \eqref{simple_function1}.}\label{alg:inner_iterationsB}
\end{algorithm}

Throughout this section, we replace the assumption \cref{assumption} by
\begin{equation}\setlength\abovedisplayskip{5pt}\setlength\belowdisplayskip{5pt}
\label{assumption2}
\|\hat x-x\|_{l^2} \leq  \hat{C}_1 \big[\mathcal{J} (\hat x)+\|B\hat x\|_{l^1}-\mathcal{J} (x)-\|Bx\|_{l^1}+\hat{C}_2\left(\|A\hat x-b\|_{l^2} -\|A x-b\|_{l^2}\right)+\hat{c}(x,b)\big].
\end{equation}
In practice, both assumptions \cref{assumption} and \cref{assumption2} are equivalent up to a change in ${c}(x,b)$ and $\hat{c}(x,b)$. For example, if \cref{assumption} holds, then \cref{assumption2} holds with $C_j=\hat{C}_j$ and $\hat{c}(x,b)=c(x,b)+C_2(\|Ax-b\|_{l^2}-\epsilon)$. The similarity is also reflected in the proofs in the examples we give in later sections, which typically prove \cref{assumption} via \cref{assumption2}. To analyze the problem, we proceed as in \cref{theory1}. The problem \eqref{main_problem2} is equivalent to
$$\setlength\abovedisplayskip{5pt}\setlength\belowdisplayskip{5pt}
\min_{u\in\mathbb{R}^{2N}}\lambda j(u)+\lambda\|K_2u\|_{l^1_{\mathbb{C}}} +\|K_1u-\hat b\|_{l^2},
$$
or, equivalently, the saddle point problem
$$\setlength\abovedisplayskip{5pt}\setlength\belowdisplayskip{5pt}
\min_{u\in\mathbb{R}^{2N}}\max_{y_1\in\mathbb{R}^{2m},y_2\in\mathbb{R}^{2q}}\widehat{\mathcal{L}}(u,y) := \langle K_1u-\hat b,y_1 \rangle+\langle K_2u,y_2 \rangle + \lambda j(u) - \chi_{\mathcal{B}_2}(y_1)-\chi_{\mathcal{B}_\infty}(y_2/\lambda),
$$
where $\mathcal{B}_2$ denotes the closed $l^2$ unit ball. We also remind the reader that $\mathcal{B}_\infty$ denotes the complex closed $l^\infty$ unit ball. These iterations lead to \cref{alg:RR_primal_dualB} with $\lambda=1/\hat{C}_2$. The following theorem describes the convergence. Note that no $\epsilon$ parameter is needed as input.

\begin{algorithm}[t!]
\textbf{Input:} $\hat C_1$ and $\hat C_2$ such that \eqref{assumption2} holds, $L$ (upper bound for $\sqrt{\|A\|^2+\|B\|^2}$), $\tau\in(0,1)$, $\nu\in(0,1)$, $\delta>0$, function handles for $A$, $A^*$, $B$ and $B^*$, seminorm $\mathcal{J}$ and data $b\in\mathbb{C}^m$. \\
\vspace{-4mm}
\begin{algorithmic}[1]
\STATE Set $\epsilon_0=\hat C_2\|b\|_{l^2}$. For $j=1,...,n-1$ compute $\epsilon_j=\upsilon\left(\delta+\epsilon_{j-1}\right)$.
\STATE Set $
k=\left\lceil\frac{4L\hat C_1\sqrt{\hat C_2^2+q}}{\upsilon\tau}\right\rceil,\quad \beta_j=\frac{\hat C_1(\delta+\epsilon_{j-1})}{2\sqrt{1+q\hat{C}_2^{-2}}}\quad \text{for $j=1,...,n$ (recall that $B\in\mathbb{C}^{q\times N}$).}$
\STATE Set $\phi_0(b)=0\in\mathbb{C}^N$, $\hat \phi_0(b)=0\in\mathbb{C}^m$ and for $j=1,...,n$, compute
\begin{equation}\setlength\abovedisplayskip{5pt}\setlength\belowdisplayskip{5pt}
\label{RRPDB}
\left(\!\frac{\phi_j(b)}{\beta_j},\hat \phi_j(b)\!\right)\!\!:=\!\texttt{InnerItSR}\!\left(\!\frac{b}{\beta_{j}},\!\frac{\phi_{j-1}(b)}{\beta_{j}},\hat \phi_{j-1}(b),A,B,k,\tau L^{-1}, \tau L^{-1}\!,{\hat C_2}^{-1}\!,\mathcal{J}\!\!\right)\!\!.\!\!\!\!
\end{equation}
\end{algorithmic} \textbf{Output:} $\phi_n(b)\in\mathbb{C}^N$.
\caption{\textbf{WARPdSR:} Accelerated algorithm for the solution of \eqref{main_problem2} and recovery of desired vector. The updates in \eqref{RRPDB} correspond to restarted and reweighted primal-dual iterations (performed by the routine $\texttt{InnerItSR}$).}\label{alg:RR_primal_dualB}
\end{algorithm}

\begin{theorem}
\label{main_theoremb}
Suppose that \eqref{assumption2} holds. Let $L$ be an upper bound for $\sqrt{\|A\|^2+\|B\|^2}$, $\tau\in(0,1)$, $\nu\in(0,1)$ and $\delta>0$. Then for any $n\in\mathbb{N}$ and any pair $(\varkappa,b)\in\mathbb{C}^{N}\times\mathbb{C}^m$ such that $\hat c(\varkappa,b)\leq\delta$, the following uniform recovery bound holds:
\begin{equation}\setlength\abovedisplayskip{5pt}\setlength\belowdisplayskip{5pt}
\label{key_theorem_boundb}
\|\phi_{n}(b)- \varkappa\|_{l^2}\leq \hat C_1\left(\frac{\delta}{1-\upsilon}+\upsilon^n\hat C_2\|b\|_{l^2}\right),
\end{equation}
where $\phi_n(b)$ denotes the output of WARPdSR in \cref{alg:RR_primal_dualB}.
\end{theorem}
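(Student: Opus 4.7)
The plan is to mirror the proof of \cref{main_theorem}, replacing the role of the feasibility gap $\eta(\|A\hat x-b\|-\epsilon)$ with the difference of $l^2$ data-fitting norms $\hat{C}_2(\|A\hat x-b\|-\|Ax-b\|)$. The first step is to establish a primal-dual ergodic bound analogous to \eqref{p3_key_bd}. Starting from the Chambolle--Pock inequality applied to $\widehat{\mathcal{L}}$, I would select $y_1$ parallel to $K_1 U^{(n)} - \hat{b}$ with $\|y_1\|_{l^2} = 1$ (so that $\chi_{\mathcal{B}_2}(y_1)=0$), and $y_2$ of complex $l^\infty$-norm $\lambda = 1/\hat{C}_2$ satisfying $\langle K_2 U^{(n)}, y_2\rangle = \lambda\|BX_n\|_{l^1}$, via the Fenchel--Moreau argument used in \cref{approx_PD_alg}. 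Dropping the non-negative terms that arise from Cauchy--Schwartz on the $y_1$ side and from $\chi_{\mathcal{B}_\infty}(Y^{(n)}_2/\lambda)$, then dividing by $\lambda$, yields
$$\setlength\abovedisplayskip{4pt}\setlength\belowdisplayskip{4pt}
\widehat{G}(X_n,x,b) := \widehat{\mathcal{J}}(X_n) - \widehat{\mathcal{J}}(x) + \hat{C}_2\bigl(\|AX_n-b\|_{l^2}-\|Ax-b\|_{l^2}\bigr) \leq \frac{\hat{C}_2\|x_0-x\|_{l^2}^2}{n\tau_1} + \frac{\hat{C}_2\|y_0-y\|_{l^2}^2}{n\tau_2}.
$$

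The crucial new ingredient, absent from the constrained case, is that the dual feasibility set $\mathcal{B}_2\times \lambda\mathcal{B}_\infty$ is bounded a priori, so the projections $\vartheta$ and $\varsigma_\lambda$ keep every dual iterate in this set. This is what makes the dual warm-restart in \eqref{RRPDB} clean: the initial $z_0$ passed into \texttt{InnerItSR} satisfies $\|z_0\|_{l^2}^2 \leq 1+q/\hat{C}_2^2$, so $\|y_0-y\|_{l^2}^2 \leq 4(1+q/\hat{C}_2^2)$ by the triangle inequality. This extra factor of two is precisely what produces the factor of $4$ (rather than $2$) in the choice of $k$, and the factor-of-two denominator in $\beta_j$, in \cref{alg:RR_primal_dualB}.

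Setting $\tau_1=\tau_2=\tau/L$, I would then apply the three-step rescaling from \cref{theory2}: run the iterations on rescaled input $(b/\beta,x_0/\beta)$, multiply the resulting inequality by $\beta$ using the positive homogeneity of $\widehat{\mathcal{J}}$ and $\|\cdot\|_{l^2}$, and invoke \eqref{assumption2} to bound $\|x_0-x\|_{l^2}^2 \leq \hat{C}_1^2[\hat{c}(x,b)+\widehat{G}(x_0,x,b)]^2$. AM--GM optimization over $\beta$ then reproduces exactly the $\beta_j$ and $k$ from \cref{alg:RR_primal_dualB}, together with the contractive step $\widehat{G}(H_k^{\beta_j}(b,x_0),\varkappa,b) \leq \upsilon[\delta+\widehat{G}(x_0,\varkappa,b)]$ whenever $\hat{c}(\varkappa,b)\leq\delta$. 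The outer restart argument now follows that of \cref{main_theorem} verbatim: since $\widehat{\mathcal{J}}(0)=0$ we have $\widehat{G}(0,\varkappa,b) \leq \hat{C}_2\|b\|_{l^2}=\epsilon_0$, and induction on $j$ gives $\widehat{G}(\phi_n(b),\varkappa,b) \leq \epsilon_n \leq \upsilon\delta/(1-\upsilon)+\upsilon^n\hat{C}_2\|b\|_{l^2}$, so combining with \eqref{assumption2} yields \eqref{key_theorem_boundb}.

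The main obstacle I anticipate is the dual warm-restart bookkeeping sketched above. The constrained formulation of \cref{main_theorem} freely rescaled $\|y_1\|_{l^2}=\eta$ inside the proof, whereas here $y_1$ is pinned to the unit ball, and the boundedness of $\mathcal{B}_2\times\lambda\mathcal{B}_\infty$ must be propagated correctly through each restart in order to preserve the clean linear-in-$\hat{C}_2$ scaling of the contraction constant. Once this is handled, the SR-LASSO formulation is strictly cleaner than the constrained one: dual warm starts require no additional algorithmic machinery beyond passing $z_0$ into \texttt{InnerItSR}, which is exactly the advantage exploited by \cref{alg:RR_primal_dualB} in the noise-blind setting.
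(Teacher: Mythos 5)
Your proposal is correct and follows essentially the same route as the paper: adapt the ergodic primal--dual gap bound of \cref{approx_PD_alg} to the saddle-point form of \eqref{main_problem2}, exploit the a priori boundedness of the dual set $\mathcal{B}_2\times\lambda\mathcal{B}_\infty$ to control $\|y_0-y\|_{l^2}^2\leq 4(1+q\hat{C}_2^{-2})$ under dual warm restarts, and then rerun the rescale--restart induction of \cref{main_theorem} with \eqref{assumption2} in place of \eqref{assumption}. The fact that your AM--GM optimization reproduces exactly the $k$ and $\beta_j$ of \cref{alg:RR_primal_dualB} confirms the bookkeeping is right; no gaps.
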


\begin{proof}
See \cref{sec:appendix1}.
\end{proof}

\subsection{Simple extensions}
\label{sec:extensions}
We end this section with some extensions of our setting \eqref{main_problem}.

\vspace{1mm}

\textbf{Optimization over a convex set:} As well as minimization over the whole of $\mathbb{C}^N$ in \eqref{main_problem} (under the constraint $\|Ax-b\|_{l^2}\leq \epsilon$), we can consider minimization over a convex set $S\subset \mathbb{C}^N$. This is useful for extra constraints such as positivity or, if $x$ represents a matrix, Hermitian matrices. The results of this paper carry through and the only change needed in the algorithm is replacing $\mathrm{prox}_{\tau_1 \mathcal{J}}$ by $\mathrm{prox}_{\tau_1 \mathcal{J}+\chi_{1/\beta_jS}}$ in \cref{alg:inner_iterations}. The assumption \eqref{assumption} can then be weakened to only holding for $\hat x,x\in S$. In particular, \cref{approx_PD_alg} holds (under the extra assumption that $x\in S$). The rescaling of $S$ to $\beta_j^{-1}S$ in each call to \texttt{InnerIt} is needed to allow the rescaling of the generalization of \eqref{p3_key_bd}.

\vspace{1mm}

\textbf{Banach spaces and Bregman distances:} Our results can also be extended to
$$\setlength\abovedisplayskip{5pt}\setlength\belowdisplayskip{5pt}
\min_{x\in \mathcal{X}} \mathcal{J}(x)\quad \text{s.t.}\quad\|Ax-b\|\leq \epsilon,
$$
where $\mathcal{X}$ is a reflective Banach space (possibly infinite-dimensional) over the reals and $A:\mathcal{X}\rightarrow \mathcal{Y}^*$ is bounded (where $\mathcal{Y}$ is also a reflective Banach space). Using the usual notation $\langle \cdot , \cdot\rangle$ for the bilinear form on $\mathcal{Y}^*\times \mathcal{Y}$, the saddle point problem then becomes
$$\setlength\abovedisplayskip{5pt}\setlength\belowdisplayskip{5pt}
\min_{x\in\mathcal{X}}\max_{y\in\mathcal{Y}} \langle Ax-b,y \rangle + \mathcal{J}(x) - \epsilon\|y\|.
$$
In particular, this allows norms that are not induced from inner products. The bounds \eqref{assumption} and \eqref{key_theorem_bound} can also be generalized to suitable bounds in terms of Bregman distances. See \cite{chambolle2016ergodic} for primal-dual iterations in terms of Bregman distances and for reflective Banach spaces.

\vspace{1mm}

\textbf{More general norms:} Finally, the duality in argument in the proof of \cref{approx_PD_alg} can be extended to other norms $\|Bx\|$ instead of $\|Bx\|_{l^1}$. Computationally, all this requires is the proximal map of the indicator function of the unit ball of the dual norm of $\|\cdot\|$.

\section{Sparse recovery}
\label{application1}

We consider sparse recovery via the (weighted) $l^1$-norm
\begin{equation}\setlength\abovedisplayskip{4pt}\setlength\belowdisplayskip{4pt}
\label{nbibca}
\mathcal{J}(x)=\|x\|_{l^1_w}:=\sum_{j=1}^N w_j |x_j|, \quad w_j\geq 0, \quad \text{(and take $B=0$)},
\end{equation}
for which \eqref{main_problem} becomes the famous basis pursuit denoising problem. This is a ubiquitous problem in many fields, including machine learning, compressed sensing, and image processing \cite{candes2006stable,candes2006robust,donoho2006compressed,candes2006compressive}. The assumption \eqref{assumption} holds for matrices $A$ that have a (weighted) robust null space property (in levels) defined in \cref{def_rNSPL}, allowing the recovery of vectors $\varkappa$ that are approximately sparse (in levels).\footnote{This is a weaker assumption than the restricted isometry property \cite[Theorem 6.13]{foucart2013invitation}.} Our result is presented explicitly in \cref{main_theorem1}. This setting is very general, for example, encompassing both classical and structured compressed sensing. Examples in imaging for Fourier and Walsh measurements are given in \cref{haar_sparsity}.

\subsection{A general result}
\label{CS_setup}

We consider sparsity in levels \cite{adcock2017breaking}, which has been shown to play a key role in the quality of image recovery in compressed sensing via the so-called ``flip test'' \cite{adcock2017breaking,bastounis2017absence}. For many imaging modalities, sparsity in levels is crucial in demonstrating that sparse regularization is near-optimal for image recovery \cite{adcock2019log,bastounis2017absence, jones2016continuous}. It is needed to account for the good recovery often found in practice for problems such as the Fourier-wavelet problem.\footnote{{The main problem for sparsity in one level in this example is that the Fourier-wavelet matrix is coherent} \cite{adcock2017breaking}.} For example, \cite{lustig2007sparse} observed both poor recovery from uniform random sampling and the improvement offered by variable density sampling for Magnetic Resonance Imaging (MRI). For further works on structured compressed sensing, see \cite{Boyer_2016, BOYER_ACHA_2019,Kutyniok_Lim_2018,Felix_2014,traonmilin2018stable,li2019compressed,eldar2010block}. The following definitions also encompass classical compressed sensing.

\begin{definition}[Sparsity in levels]
\label{levels}
Let $\text{\upshape{\textbf{M}}}=(M_1,...,M_r) \in {\mathbb{N}^r}$, $1\leq M_1<...<M_r=N$, and $\text{\upshape{\textbf{s}}}=(s_1,...,s_r)\in {\mathbb{N}^{r}},$ where $s_k\leq M_k-M_{k-1}$ for $k=1,...,r$ ($M_0=0$). A vector
$x\in\mathbb{C}^N$ is $(\text{\upshape{\textbf{s}}},\text{\upshape{\textbf{M}}})$-sparse in levels if
$$\setlength\abovedisplayskip{5pt}\setlength\belowdisplayskip{5pt}
\left|\mathrm{supp}(x)\cap\{M_{k-1}+1,...,M_k\}\right|\leq s_k,\quad k=1,...,r.
$$
The total sparsity is $s=s_1+...+s_r$. We denote the set of $(\text{\upshape{\textbf{s}}},\text{\upshape{\textbf{M}}})$-sparse vectors by $\Sigma_{\text{\upshape{\textbf{s}}},\text{\upshape{\textbf{M}}}}$. We also define the following measure of distance of a vector $x$ to $\Sigma_{\text{\upshape{\textbf{s}}},\text{\upshape{\textbf{M}}}}$ by
$$\setlength\abovedisplayskip{5pt}\setlength\belowdisplayskip{5pt}
\sigma_{\text{\upshape{\textbf{s}}},\text{\upshape{\textbf{M}}}}(x)_{l^1_w}=\inf\left\{\|x-z\|_{l^1_w}:z\in\Sigma_{\text{\upshape{\textbf{s}}},\text{\upshape{\textbf{M}}}}\right\}.
$$
\end{definition}

Throughout the paper, we drop the $\text{\upshape{\textbf{M}}}$ subscript when considering a single level. For simplicity, we assume that $w_i=w_{(j)}>0$ if $M_{j-1}+1\leq i\leq M_{j}.$ For example, if an image $c$ is compressible in a wavelet basis with coefficients $x$, then $\sigma_{\textbf{s},\textbf{M}}(x)_{l^1_w}$ is expected to be small when the levels correspond to wavelet levels \cite[Ch.\ 9]{Mallat09}. In general, the weights are a prior on the anticipated approximate support of the vector \cite{friedlander2012recovering}. We also define the following quantities:
$$\setlength\abovedisplayskip{5pt}\setlength\belowdisplayskip{5pt}
\xi=\xi(\textbf{s},\textbf{M},w)\coloneqq \sum_{k=1}^rw_{(k)}^2s_k,\quad
\zeta=\zeta(\textbf{s},\textbf{M},w)\coloneqq \min_{k=1,...,r}w_{(k)}^2s_k,\quad
\kappa=\kappa(\textbf{s},\textbf{M},w)\coloneqq {\xi}/{\zeta}.
$$

\begin{definition}[weighted rNSP in levels \cite{bastounis2017absence}]\label{def_rNSPL}
Let $(\textup{\textbf{s}},\textup{\textbf{M}})$ be local sparsities and sparsity levels respectively. For weights $\{w_i\}_{i=1}^N$ $(w_i>0)$, we say that $A\in\mathbb{C}^{m\times N}$ satisfies the weighted robust null space property in levels (weighted rNSPL) of order $(\textup{\textbf{s}},\textup{\textbf{M}})$ with constants $0<\rho<1$ and $\gamma>0$ if for any $(\textup{\textbf{s}},\textup{\textbf{M}})$ support set $\Delta$,
$$\setlength\abovedisplayskip{5pt}\setlength\belowdisplayskip{5pt}
\|x_{\Delta}\|_{l^2}\leq{\rho\|x_{\Delta^c}\|_{l^1_w}}/{\sqrt{\xi}}+\gamma\|Ax\|_{l^2},
\quad\quad \text{for all $x\in\mathbb{C}^N$.}
$$
Here, $x_S$ denotes the vector with $[x_S]_j=x_j$ if $j\in S$ and $[x_S]_j=0$ otherwise.
\end{definition}

With these definitions in hand, the following provides the reconstruction guarantee.

\begin{theorem}
\label{main_theorem1}
Suppose that $A$ has the weighted rNSPL of order $(\textup{\textbf{s}},\textup{\textbf{M}})$ with constants $0<\rho<1$ and $\gamma>0$. Then the assumption \eqref{assumption} holds with
\begin{align*}\setlength\abovedisplayskip{3pt}\setlength\belowdisplayskip{4pt}
&C_1=\left(\rho+\frac{(1+\rho)\kappa^{1/4}}{2}\right)\frac{1+\rho}{\sqrt{\xi}(1-\rho)},\quad C_2=\frac{\gamma}{C_1}\cdot\frac{2+2\rho+(3+\rho)\kappa^{1/4}}{2(1-\rho)},\quad \text{and}\\
&c(x,b)=2\sigma_{\textup{\textbf{s}},\textup{\textbf{M}}}(x)_{l^1_w}+C_2\left(\|Ax-b\|_{l^2}+\epsilon\right).
\end{align*}
Let $\epsilon>0$, $L_A$ be an upper bound for $\|A\|$, $\tau\in(0,1)$, $\delta>0$. Then for any $n\in\mathbb{N}$ and any pair $(\varkappa,b)\in\mathbb{C}^{N}\times\mathbb{C}^m$ such that $\|A\varkappa-b\|\leq \epsilon$ and $c(\varkappa,b)\leq\delta$,
\begin{align*}\setlength\abovedisplayskip{3pt}\setlength\belowdisplayskip{3pt}
\|\phi_{n}(b)- \varkappa\|_{l^2}\!&\leq\! C_1\!\!\left[\!\frac{\delta}{1-\exp(-1)}\!+\!C_2\|b\|_{l^2}\cdot \exp\!\left(\!\!-T(n)\!\!\left\lceil{2eL_A\gamma \frac{2+2\rho+(3+\rho)\kappa^{1/4}}{2(1-\rho)}}\right\rceil^{-1}\!\right)\!\!\right]\!,\\
\|\phi_{n}(b)- \varkappa\|_{l^1_w}\!&\leq\! \frac{1+\rho}{1-\rho}\!\!\left[\!\frac{\delta}{1-\exp(-1)}\!+\!C_2\|b\|_{l^2}\cdot \exp\!\left(\!\!-T(n)\!\!\left\lceil{2eL_A\gamma \frac{2+2\rho+(3+\rho)\kappa^{1/4}}{2(1-\rho)}}\right\rceil^{-1}\!\right)\!\!\right]\!,
\end{align*}
where $\phi_n(b)$ denotes the output of WARPd in \cref{alg:RR_primal_dual} (with optimal choice $\upsilon=\exp(-1)$) and $T(n)=nk$ denotes the total number of inner iterations.
\end{theorem}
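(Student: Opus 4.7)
The plan is to verify that the approximate sharpness inequality \eqref{assumption} holds with the stated constants, then invoke \cref{main_theorem} for the $l^2$ bound and transfer to the $l^1_w$ norm via a standard cone-type argument. Since $B=0$ and $q=0$, we have $L=L_A\geq \|A\|$, so once \eqref{assumption} is established the convergence rate simply specializes the general bound.

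First, fix arbitrary $x,\hat x\in\mathbb{C}^N$, set $h=\hat x - x$, and let $\Delta$ be a $(\textup{\textbf{s}},\textup{\textbf{M}})$-sparse support realizing $\|x_{\Delta^c}\|_{l^1_w}\leq \sigma_{\textup{\textbf{s}},\textup{\textbf{M}}}(x)_{l^1_w}$. Splitting $\hat x = (x+h)_{\Delta}+(x+h)_{\Delta^c}$ and applying the triangle inequality to each piece gives the familiar rearrangement
\[
\|h_{\Delta^c}\|_{l^1_w} \leq \|\hat x\|_{l^1_w}-\|x\|_{l^1_w}+2\sigma_{\textup{\textbf{s}},\textup{\textbf{M}}}(x)_{l^1_w}+\|h_{\Delta}\|_{l^1_w}.
\]
Apply the weighted rNSPL to $h$ and use the weighted Cauchy--Schwarz inequality $\|h_\Delta\|_{l^1_w}\leq \sqrt{\xi}\,\|h_\Delta\|_{l^2}$. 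Absorbing the resulting $\rho\|h_\Delta\|_{l^2}$ term on the left yields
\[
\|h_\Delta\|_{l^2}\leq \frac{\rho}{(1-\rho)\sqrt{\xi}}\bigl[\|\hat x\|_{l^1_w}-\|x\|_{l^1_w}+2\sigma_{\textup{\textbf{s}},\textup{\textbf{M}}}(x)_{l^1_w}\bigr]+\frac{\gamma}{1-\rho}\|Ah\|_{l^2},
\]
and the feasibility gap is handled via $\|Ah\|_{l^2}\leq (\|A\hat x-b\|_{l^2}-\epsilon)+(\|Ax-b\|_{l^2}+\epsilon)$. Substituting back into the displayed inequality above also controls $\|h_{\Delta^c}\|_{l^1_w}$ by the same right-hand side (up to the $1+\rho$ factor).

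Second, to pass from $\|h_\Delta\|_{l^2}$ to $\|h\|_{l^2}=(\|h_\Delta\|_{l^2}^2+\|h_{\Delta^c}\|_{l^2}^2)^{1/2}$, one needs a level-wise interpolation estimate controlling $\|h_{\Delta^c}\|_{l^2}$ by $\|h_{\Delta^c}\|_{l^1_w}/\sqrt{\xi}$. The bound I expect (and the source of the $\kappa^{1/4}$ factor in $C_1$) comes from a weighted Stechkin-type decomposition of $h_{\Delta^c}$ into consecutive $(\textup{\textbf{s}},\textup{\textbf{M}})$-sparse-in-levels blocks $T_1,T_2,\dots$ of decreasing reweighted magnitude: one obtains $\|h_{T_j}\|_{l^2}\lesssim \|h_{T_{j-1}}\|_{l^1_w}/\sqrt{\zeta}$ level-by-level and, after summing and balancing against $\sqrt{\xi}$, this produces a prefactor of order $\kappa^{1/4}/\sqrt{\xi}$ times $\|h_{\Delta^c}\|_{l^1_w}$. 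Combining with the bound on $\|h_\Delta\|_{l^2}$ and collecting terms gives exactly the stated $C_1$ and $C_2$, with $c(x,b)=2\sigma_{\textup{\textbf{s}},\textup{\textbf{M}}}(x)_{l^1_w}+C_2(\|Ax-b\|_{l^2}+\epsilon)$. This weighted Stechkin step (getting the constant right, not just the order) is the main technical obstacle; the rest is bookkeeping.

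Third, with \eqref{assumption} established, \cref{main_theorem} applies with $q=0$ and $L=L_A$, and the optimal choice $\upsilon=e^{-1}$ together with the computation following \eqref{conv_rate99} yields the claimed $l^2$ bound: the exponent $\lceil 2eL_AC_1C_2/(C_1)\rceil^{-1}=\lceil 2eL_A\gamma(2+2\rho+(3+\rho)\kappa^{1/4})/(2(1-\rho))\rceil^{-1}$ appears after substituting the explicit expression for $C_2$. For the $l^1_w$ bound, apply the rearrangement from the first paragraph with $\hat x=\phi_n(b)$ and $x=\varkappa$:
\[
\|\phi_n(b)-\varkappa\|_{l^1_w}=\|h_\Delta\|_{l^1_w}+\|h_{\Delta^c}\|_{l^1_w}\leq \sqrt{\xi}\,\|h_\Delta\|_{l^2}+\|h_{\Delta^c}\|_{l^1_w},
\]
bound $\|h_{\Delta^c}\|_{l^1_w}$ via the triangle-inequality identity and $\|h_\Delta\|_{l^2}$ via the rNSPL estimate established above. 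After collecting, the $\sqrt{\xi}$ and $C_1$ factors cancel to leave exactly the prefactor $(1+\rho)/(1-\rho)$ multiplying the same bracket appearing in the $l^2$ estimate. This gives the second displayed inequality and completes the proof.
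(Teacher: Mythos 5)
Your overall strategy is the same as the paper's: verify that the weighted rNSPL implies \eqref{assumption} with the stated $C_1$, $C_2$, $c(x,b)$, then specialize \cref{main_theorem} with $q=0$ and $L=L_A$ (so that $C_1\sqrt{C_2^2+q}=C_1C_2=\gamma\frac{2+2\rho+(3+\rho)\kappa^{1/4}}{2(1-\rho)}$, giving the stated exponent), and obtain the $l^1_w$ bound by re-running the cone argument with $\hat x=\phi_n(b)$. Your first paragraph is correct and is exactly the standard route: the rearrangement $\|h_{\Delta^c}\|_{l^1_w}\leq \|\hat x\|_{l^1_w}-\|x\|_{l^1_w}+2\sigma_{\textup{\textbf{s}},\textup{\textbf{M}}}(x)_{l^1_w}+\|h_{\Delta}\|_{l^1_w}$, the weighted Cauchy--Schwarz $\|h_\Delta\|_{l^1_w}\leq\sqrt{\xi}\|h_\Delta\|_{l^2}$, and absorption of the $\rho$ term yield the $l^1_w$ error bound with prefactor $\frac{1+\rho}{1-\rho}$ and coefficient $\frac{2\gamma\sqrt{\xi}}{1-\rho}$ on $\|Ah\|_{l^2}$, which one checks is dominated by $\frac{1+\rho}{1-\rho}C_2$; so the second displayed inequality follows once the first does.

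The genuine gap is the second step. The whole content of the theorem beyond \cref{main_theorem} is the \emph{explicit} constants, and the coefficient $\rho+\frac{(1+\rho)\kappa^{1/4}}{2}$ (equivalently, the $l^2$-to-$l^1_w$ lemma $\|v\|_{l^2}\leq\bigl(\rho+\frac{(1+\rho)\kappa^{1/4}}{2}\bigr)\frac{\|v\|_{l^1_w}}{\sqrt{\xi}}+\bigl(1+\frac{\kappa^{1/4}}{2}\bigr)\gamma\|Av\|_{l^2}$, which is what combines with your Lemma-1-type bound to give exactly $C_1$ and $C_2$) is precisely what you leave unproven, acknowledging it only up to ``$\lesssim$''. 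Moreover, the route you sketch is not quite the one that produces these constants: decomposing $h_{\Delta^c}$ into successive blocks $T_1,T_2,\dots$ (shelling) relative to the support $\Delta$ chosen for $x$ does not directly work, because $\Delta$ is adapted to $x$, not to $h$, so $h_{\Delta^c}$ may contain the large entries of $h$ and the block-wise estimate $\|h_{T_j}\|_{l^2}\lesssim\|h_{T_{j-1}}\|_{l^1_w}/\sqrt{\zeta}$ fails at the first block. The correct argument applies a one-shot weighted Stechkin estimate in levels to $h$ itself --- choosing a \emph{new} $(\textup{\textbf{s}},\textup{\textbf{M}})$-support $\Delta'$ of the largest weighted entries of $h$ in each level, bounding $\|h_{(\Delta')^c}\|_{l^2}$ by an AM--GM balancing that produces the $\kappa^{1/4}/2$, and bounding $\|h_{\Delta'}\|_{l^2}$ by the rNSPL --- and only then feeds in the $l^1_w$ bound from your first paragraph. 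Without this lemma (which the paper imports from the weighted rNSPL-in-levels literature), the stated $C_1$ and $C_2$, and hence the stated convergence rate, are not established.
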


\begin{proof}
See \cref{proof_sec_3_SM}.
\end{proof}

In summary, if $A$ satisfies the robust null space property (in levels), then WARPd provides accelerated recovery. The condition $c(\varkappa,b)\leq \delta$ means that both the measurement error $\|A\varkappa-b\|_{l^2}+\epsilon$ and the distance of $\varkappa$ to $\Sigma_{\text{\upshape{\textbf{s}}},\text{\upshape{\textbf{M}}}}$ (measured by $\sigma_{\textup{\textbf{s}},\textup{\textbf{M}}}(\varkappa)_{l^1_w}$) are small. Moreover, the rate of convergence in \eqref{conv_rate99} is directly related to $C_1$ and $C_2$, and hence to $\rho$ and $\gamma$.

\subsection{Example in compressive imaging}
\label{haar_sparsity}


We consider the case that $A$ is a multilevel subsampled unitary matrix \cite{adcock2017breaking} with respect to $U=V\Psi^*$, where $\Psi$ denotes the db2 wavelet transform and $V$ is the discrete Fourier (Fourier sampling) or Walsh--Hadamard transform (binary sampling). $A$ and $A^*$ are implemented rapidly using the fast Fourier transform or fast Walsh--Hadamard transform, and the discrete wavelet transform. Note that $[\mathrm{prox}_{\tau_1\mathcal{J}}(x)]_i = \max\{0,1-{\tau_1w_i}/{|x_i|}\}x_i.$ Hence, the cost per inner iteration is $\mathcal{O}(N\log(N))$. Fourier sampling arises in numerous applications such as MRI, Nuclear Magnetic Resonance and radio interferometry, while binary sampling arises in optical imaging modalities such as lens-less imaging, infrared imaging holography and fluorescence microscopy. Further details on the bases used, sampling structure, and results that $A$ has the weighted rNSPL are given in \cref{proof_sec_3_SM}. \cref{fig:peppers} (left) shows the test image used in this section.

\begin{figure}[!tbp]
  \centering
	\vspace{1mm}
  \begin{minipage}[b]{0.32\textwidth}
    \begin{overpic}[height=4.3cm,trim={20mm 7mm 20mm 7mm},clip]{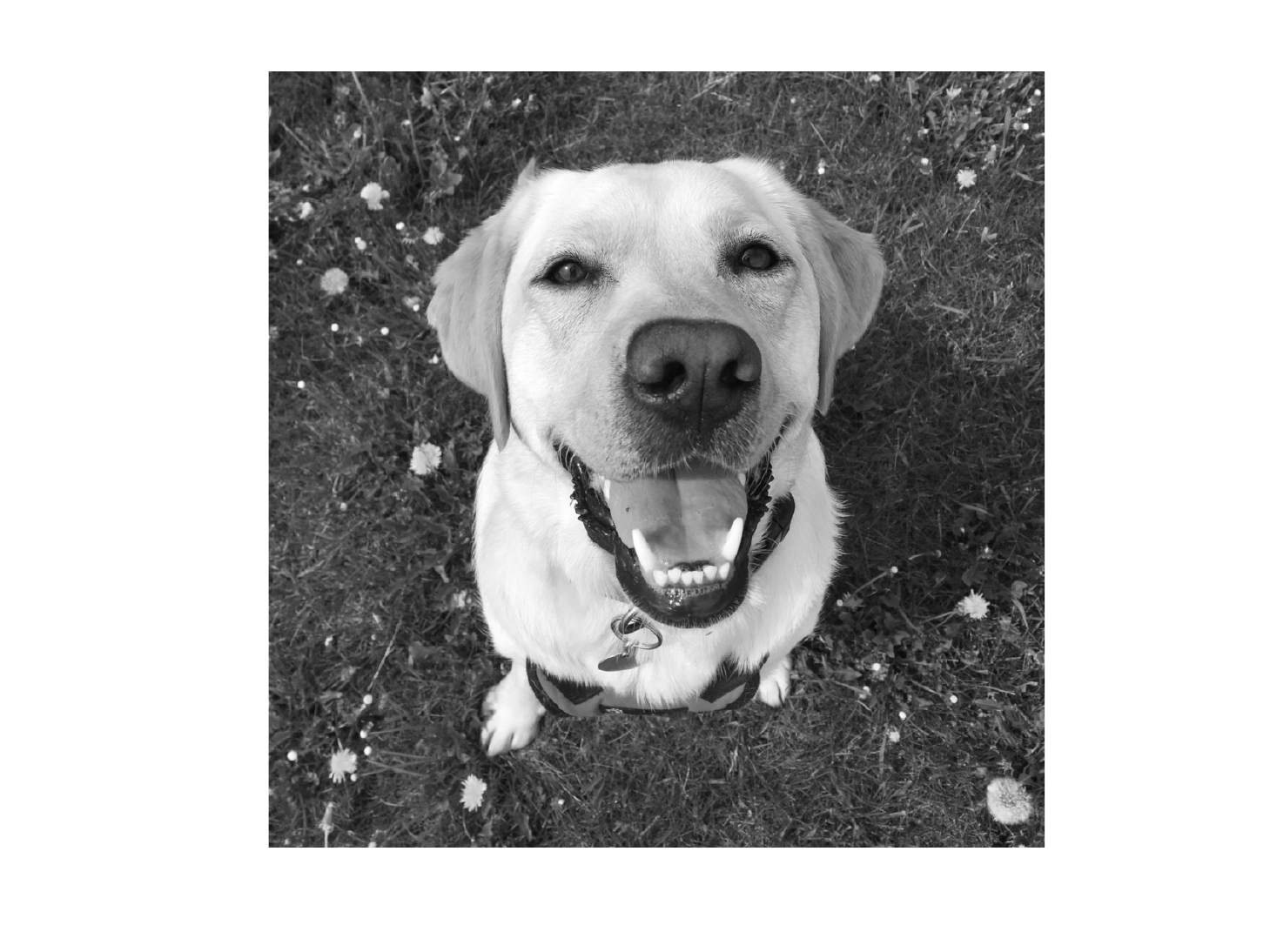}
		\put (31,92) {Test Image}
     \end{overpic}
  \end{minipage}
  \hfill
  \begin{minipage}[b]{0.32\textwidth}
		\begin{overpic}[height=4.3cm,trim={20mm 7mm 20mm 7mm},clip]{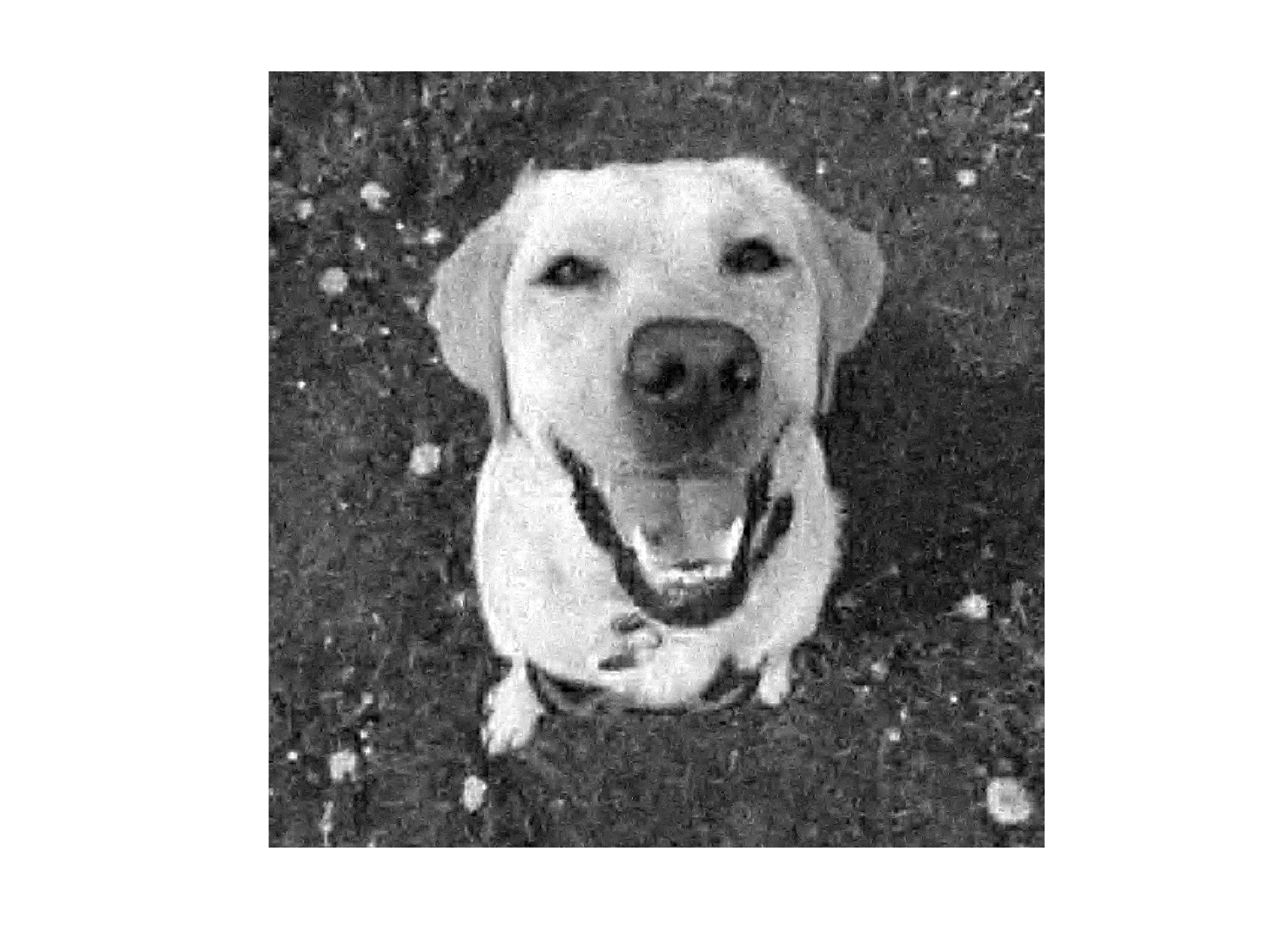}
     \put (36,92) {WARPd}
     \end{overpic}
  \end{minipage}
	\hfill
	\begin{minipage}[b]{0.32\textwidth}
\begin{overpic}[height=4.3cm,trim={20mm 7mm 20mm 7mm},clip]{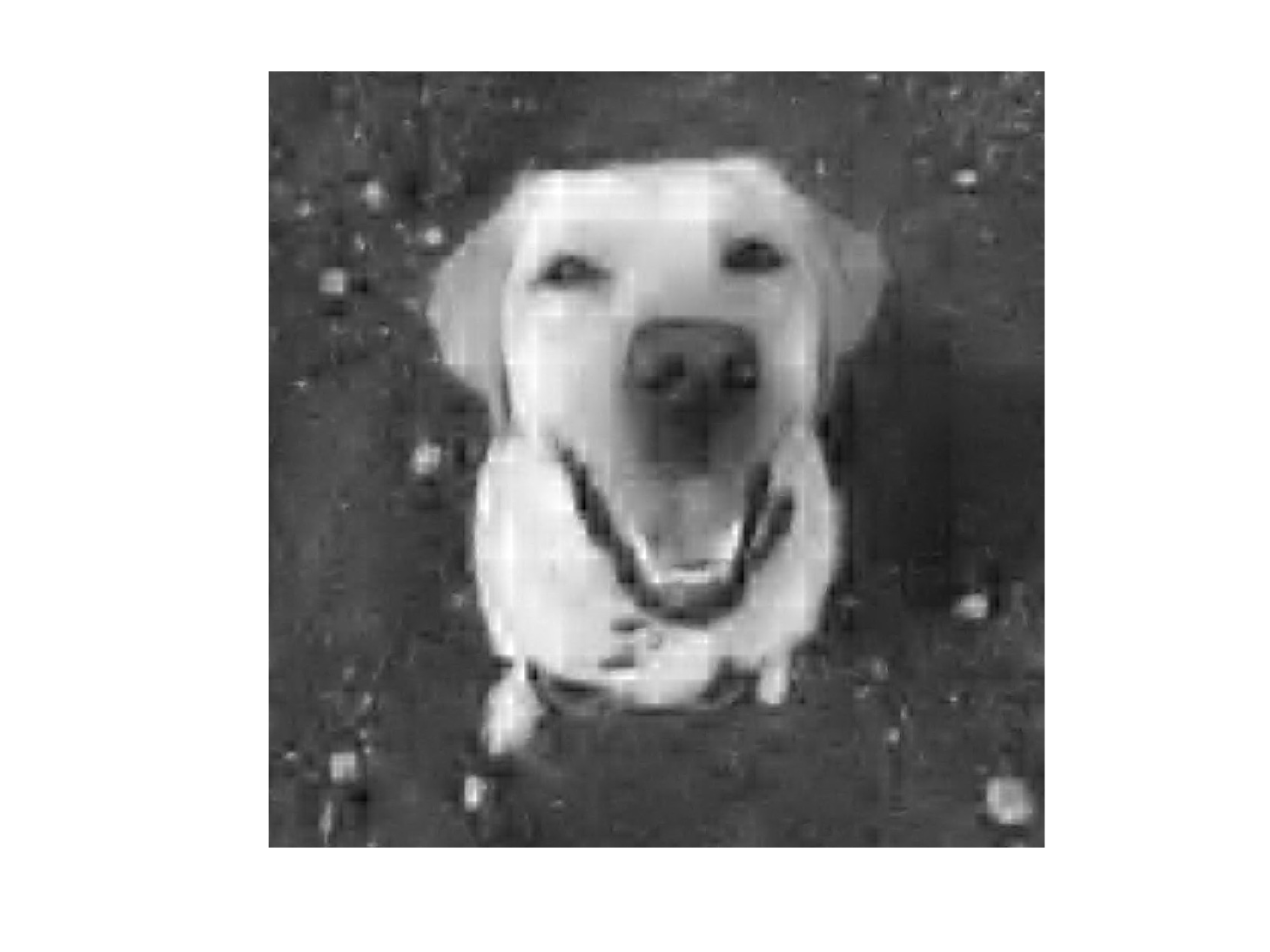}
    \put (37,92) {NESTA}
     \end{overpic}
  \end{minipage}\vspace{-2mm}
  \caption{Left: $1024\times 1024$ test image with pixel values scaled to $[0,1]$. Middle: Recovered image from 5\% binary measurements using WARPd and 30 matrix-vector products. Right: Recovered image from 5\% binary measurements using NESTA and 150 matrix-vector products.}\vspace{-3mm}
\label{fig:peppers}
\end{figure}

As a benchmark, we compare to the algorithm NESTA \cite{becker2011nesta} (available at \url{https://statweb.stanford.edu/~candes/software/nesta/}), which applies a smoothing technique and an accelerated first-order algorithm \cite{nesterov2005smooth}. NESTA is widely regarded as a state-of-the-art method for basis pursuit, is widely used for solving large-scale compressed sensing reconstruction problems, and compares favorably with other state-of-the-art methods (see, for example, the extensive numerical tests in \cite[Section 5]{becker2011nesta}). We run two versions of NESTA to solve \eqref{main_problem}, both with default parameters and acceleration through continuation. For the first version, we take a smoothing parameter $\mu=0.001$. For the second version, we perform a grid-based search for optimal smoothing parameters and for each number of iterations, we report the error for an optimal smoothing parameter. As an error metric for an iterate $x$, we take
\begin{equation}\setlength\abovedisplayskip{5pt}\setlength\belowdisplayskip{5pt}
\label{error_met1}
\mathrm{Error}(x)=\left({\left|\|x\|_{l^1_w}-\|x^*\|_{l^1_w}\right|+C_2\left|\|A x-b\|-\epsilon\right|}\right)/{\|x^*\|_{l^1_w}},
\end{equation}
where $x^*$ is an optimal solution of \eqref{main_problem} computed using several hundred thousand iterations to be sure of convergence. This error directly measures the objective function optimality gap and the feasibility gap (note also that $\mathrm{Error}(x^*)=0$). It also controls the recovery of the sought for image $\varkappa$ (see the proof of \cref{main_theorem1}). In what follows, we present this error metric as a function of the number of matrix-vector products ($A$ or $A^*$) used.

We first consider $15\%$ subsampling and corrupt the measurements with $5\%$ Gaussian noise. The constants $C_1$ and $C_2$ are taken from the discussion in \cref{proof_sec_3_SM}. The sparsities and weights are estimated by thresholding the wavelet coefficients of a Shepp--Logan phantom. In particular, we do not choose or tune any parameters based on the image we use to test the algorithm. We take $\epsilon=0.06\|b\|_{l^2}$, $\delta=C_2\epsilon$, $\nu=e^{-1}$ and $\tau=1$. \cref{fig:fourier_recovery} (left, middle) shows the convergence for our algorithm using ergodic iterates and non-ergodic iterates in both the inner iterations and restarts. We have also shown results for non-restarted primal-dual iterations (labeled PD). The benefit of acceleration is clear and our algorithm converges at a much faster rate than NESTA. The non-ergodic version of our algorithm performs better than \cref{alg:RR_primal_dual}. We do not have a theoretical explanation for this, but this kind of behavior (and its reverse, i.e., ergodic iterates performing better) has been observed for non-restarted primal-dual iterations \cite{chambolle2016ergodic}. The case of binary sampling also converges slightly faster (this is to be expected from the sampling bounds mentioned in \cref{proof_sec_3_SM}). We found similar behavior for a range of different images, subsampling rates, higher order wavelets etc.

\begin{figure}[!tbp]
  \centering
	\vspace{1mm}
  \begin{minipage}[b]{0.32\textwidth}
	\centering
    \begin{overpic}[width=\textwidth,trim={2mm 0mm 10mm 0mm},clip]{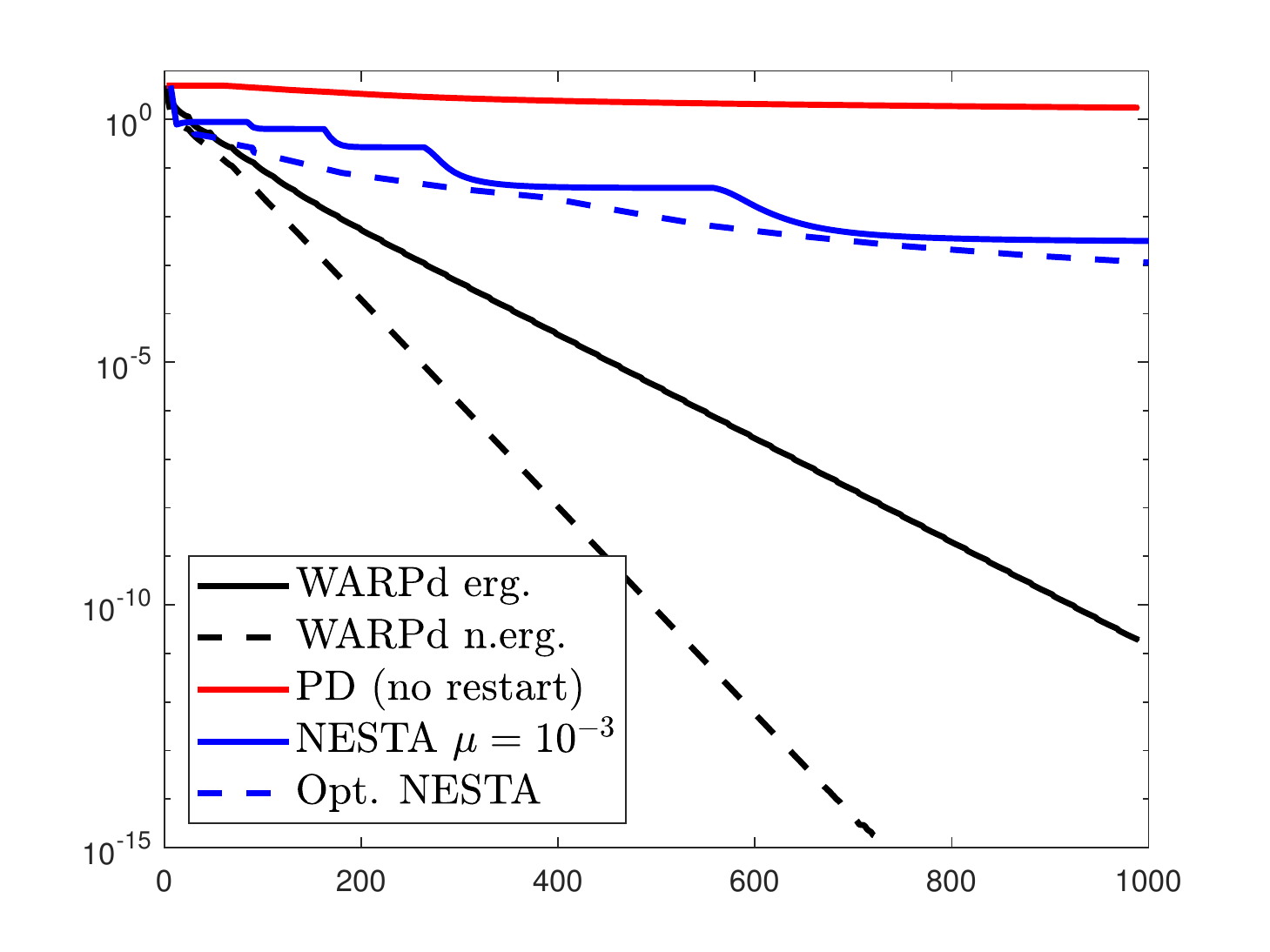}
		\put (-1,22) {\rotatebox{90}{\small{Error \eqref{error_met1}}}}
    \put (29,82) {Fourier Sampling}
     \put (19,-2) {\small{Matrix-vector products}}
     \end{overpic}
  \end{minipage}
	\begin{minipage}[b]{0.32\textwidth}
	\centering
    \begin{overpic}[width=\textwidth,trim={2mm 0mm 10mm 0mm},clip]{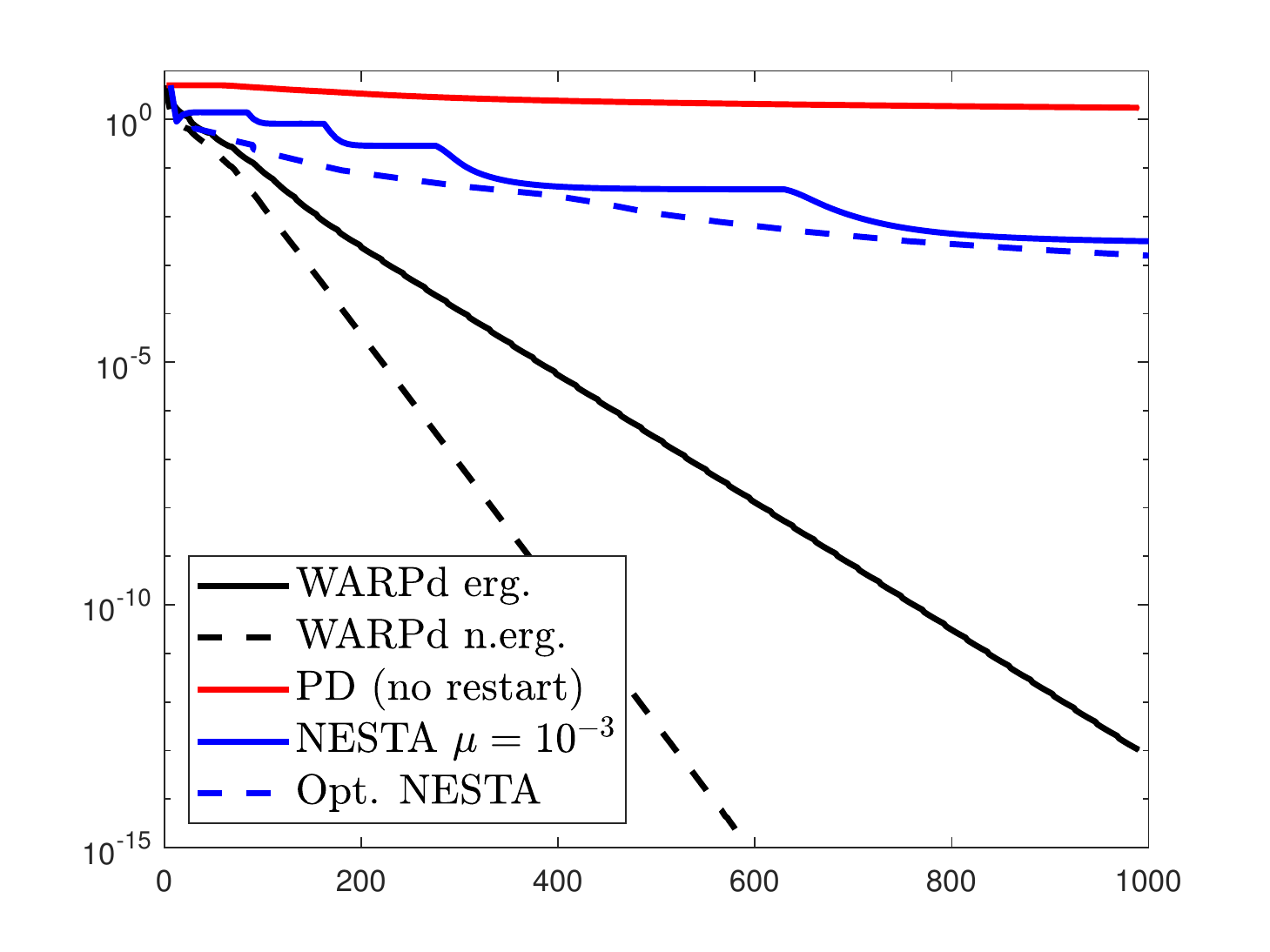}
    \put (-1,22) {\rotatebox{90}{\small{Error \eqref{error_met1}}}}
    \put (29,82) {Binary Sampling}
     \put (19,-2) {\small{Matrix-vector products}}
     \end{overpic}
  \end{minipage}
  \begin{minipage}[b]{0.32\textwidth}
    \begin{overpic}[width=\textwidth,trim={2mm 0mm 10mm 0mm},clip]{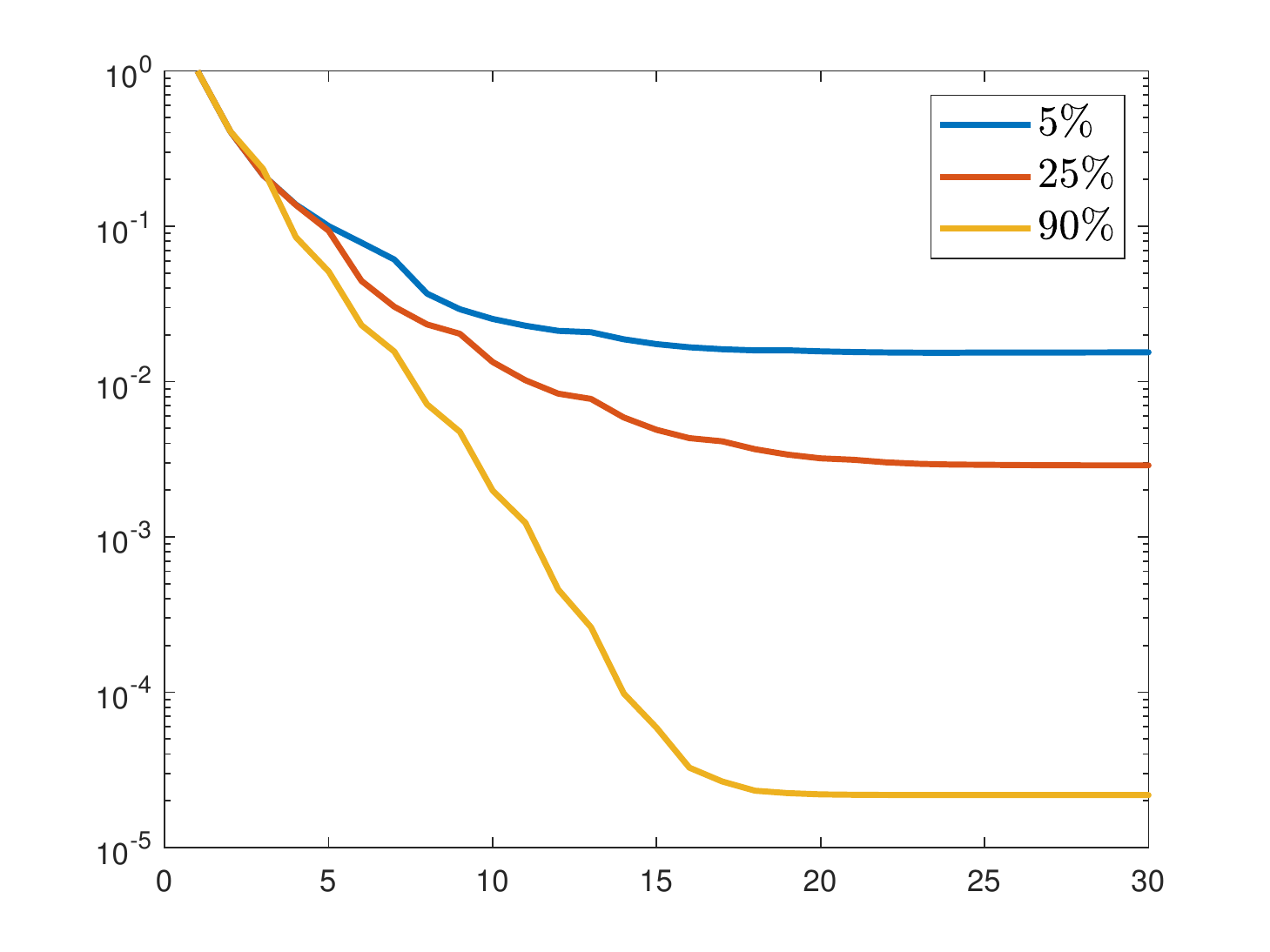}
		\put (32,82) {Relative MSE}
		\put (29,-2) {\small{Inner iterations}}
     \end{overpic}
  \end{minipage}
	  \caption{Left: Convergence for Fourier 15\% sampling. Middle: Convergence for binary 15\% sampling. Right: Convergence for Fourier sampling and different sampling rates (relative MSE).}\vspace{-3mm}
\label{fig:fourier_recovery}
\end{figure}

We now consider the difference between the reconstruction and the image itself. From \cref{main_theorem1}, we expect that this error will decrease linearly down to the intrinsic bound $\sim C_1\delta$, which corresponds to the distance from the image to the set of solutions of \eqref{main_problem}. \cref{fig:fourier_recovery} (right) shows the relative MSE between the reconstruction and the image for Fourier sampling at different sampling rates. In all cases, the level of noise was chosen so that it contributes an error comparable to solving \eqref{main_problem}. We see the expected behavior, where the final error is due to the fact that the image's wavelet coefficients are only approximately sparse (for example, the error for a standard phantom image was of the order $10^{-12}$), and, as expected, is smaller for the larger sampling rate, with a faster rate of convergence. Similar behavior occurs for binary sampling. For example, \cref{fig:fourier_recovery} (middle, right) shows the reconstruction using $5\%$ sampling and 30 matrix-vector products for WARPd, as well as 150 matrix-vector products for NESTA. Again, this demonstrates the faster convergence.

\section{Low-rank matrix recovery}
\label{application2}

In this section, we consider the problem of recovering an approximately low-rank matrix $M\in\mathbb{C}^{n_1\times n_2}$ via \eqref{main_problem} with the nuclear norm regularizer
\begin{equation}\setlength\abovedisplayskip{5pt}\setlength\belowdisplayskip{5pt}
\label{eq:nuc_norm}
\mathcal{J}(M)=\|M\|_1:=\sum_{j=1}^{\min\{n_1,n_2\}}\sigma_j(M),\quad \text{(and take $B=0$)},
\end{equation}
where $\sigma_j(M)$ denotes the singular values of $M$. Low-rank matrix is a non-commutative version of recovery of (approximately) sparse vectors. The assumption of low-rank assures that the matrix $M^*M$ is sparse in its eigenbasis. There are numerous instances where nuclear norm minimization \eqref{main_problem} with \eqref{eq:nuc_norm} (and related problems) provably recovers the desired low-rank matrix from considerably fewer than $n_1n_2$ measurements \cite{candes2009exact,gross2011recovering,liu2011universal,recht2010guaranteed,kueng2017low,krahmer2021convex}.\footnote{Similar to the relationship between $l^1$ and $l^0$ minimization, the nuclear norm is a convex relaxation of the rank operator and the rank minimization problem is NP-hard in general.}

We consider measurement maps of the form ($\mathrm{tr}$ denotes trace)
\begin{equation}\setlength\abovedisplayskip{5pt}\setlength\belowdisplayskip{5pt}
\label{mat_recov_measjkjkjk}
A(M)=\sum_{j=1}^m \mathrm{tr}(MA_j^*)e_j\in\mathbb{C}^m, \quad A^*(y)=\sum_{j=1}^m y_j A_j\in\mathbb{C}^{n_1\times n_2},
\end{equation}
where $A_j\in\mathbb{C}^{n_1\times n_2}$ are measurement matrices and the $\{e_j\}_{j=1}^m$ are the canonical basis vectors of $\mathbb{C}^m$. We apply the method of \cref{THEALG} by taking the vector $x$ to be the vectorized matrix $M$. The assumption \eqref{assumption} holds for measurement maps $A$ that satisfy the Frobenius-robust rank null space property in \cref{Frobrnsp}, which is analogous to \cref{def_rNSPL}. This is a weaker assumption than the rank restricted isometry property \cite[Theorem 6.13]{foucart2013invitation}\footnote{The cited theorem is for the analogous properties of sparse recovery of vectors. The adaptation of the proof for the case of recovery of low-rank matrices is straight-forward using the relevant Schatton $p$-norms.} (the rank restricted isometry property is another common property used to prove recovery results \cite{candes2011tight,recht2010guaranteed}), and allows the recovery of matrices $M$ that are approximately low-rank. \cref{main_theorem2} gives our result and, as an example, we consider Pauli measurements in quantum state tomography.

\subsection{A general result}
\label{mat_setup}

The following definition is analogous to \cref{def_rNSPL} for a single level and unweighted $l^1$-norm\footnote{It is possible to consider a weighted version of the nuclear norm. However, the associated optimization problem is very difficult and non-convex \cite{gu2014weighted}.}, but now the relevant norms are replaced by their Schatten $p$-norm counterparts. We use $\|M_c\|_{1}$ to denote $\sum_{j>r}\sigma_j(M)$ for a given $r$.

\begin{definition}[Frobenius-robust rank null space property \cite{kabanava2016stable}]
\label{Frobrnsp}
We say that $A:\mathbb{C}^{n_1\times n_2}\rightarrow\mathbb{C}^m$ satisfies the Frobenius-robust rank null space
property of order $r$ with constants $\rho\in(0,1)$ and $\gamma>0$ if for all $M\in\mathbb{C}^{n_1\times n_2}$, the singular values of $M$ satisfy
$$\setlength\abovedisplayskip{5pt}\setlength\belowdisplayskip{5pt}
\|M_r\|_{2}:=\sqrt{\sigma_1(M)^2+\ldots+\sigma_r(M)^2}\leq {\rho}\|M_c\|_{1}/{\sqrt{r}}+\gamma\|A(M)\|_{l^2}.
$$
\end{definition}

The following provides the reconstruction guarantee.

\begin{theorem}
\label{main_theorem2}
Suppose that $A:\mathbb{C}^{n_1\times n_2}\rightarrow\mathbb{C}^m$ satisfies the Frobenius-robust rank null space property of order $r$ with constants $\rho\in(0,1)$ and $\gamma>0$. Then \eqref{assumption} holds with
\begin{align}\setlength\abovedisplayskip{5pt}\setlength\belowdisplayskip{5pt}
\label{consts_mat_com}
C_1\!=\!\frac{(1+\rho)^2}{(1-\rho)r^{\frac{1}{2}}}, \quad\!\!\! C_2\!=\!\frac{\gamma(3+\rho)r^{\frac{1}{2}}}{(1+\rho)^2}, \quad\!\!\! c(M,b)\!=\!2\|M_c\|_1\!+\!\frac{\gamma(3+\rho)r^{\frac{1}{2}}}{(1+\rho)^2}\left(\|A(M)-b\|_{l^2}\!+\!\epsilon\right).
\end{align}
Let $\epsilon>0$, $L_A$ be an upper bound for $\|A\|$, $\tau\in(0,1)$, $\delta>0$, and $C_1,C_2$ and $c(\cdot,\cdot)$ be given by \eqref{consts_mat_com}. Then for any $n\in\mathbb{N}$, and $p\in[1,2]$, and any pair $(M,b)\in\mathbb{C}^{n_1\times n_2}\times\mathbb{C}^m$ such that $\|A(M)-b\|\leq \epsilon$ and $c(M,b)\leq\delta$, the following uniform recovery bounds hold:
$$\setlength\abovedisplayskip{5pt}\setlength\belowdisplayskip{5pt}
\|\phi_{n}(b)- M\|_{p}\!\leq \!\frac{(1+\rho)^2}{(1-\rho)}\!\!\left[\!\frac{\delta r^{\frac{1-p}{p}}}{1-\exp(-1)}\!+\!\frac{\gamma(3+\rho)r^{\frac{1}{p}-\frac{1}{2}}}{(1+\rho)^2}\|b\|_{l^2}\cdot \exp\!\left(\!-T(n)\!\left\lceil{2eL_A\gamma \frac{(3+\rho)}{(1-\rho)}}\right\rceil^{-1}\!\right)\right]\!\!,
$$
where $\phi_n(b)$ denotes the output of WARPd in \cref{alg:RR_primal_dual} (with optimal choice $\upsilon=\exp(-1)$) and $T(n)=nk$ denotes the total number of inner iterations.
\end{theorem}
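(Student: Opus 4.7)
The plan is to follow the three-stage template of \cref{main_theorem1}. First, verify the approximate sharpness inequality \eqref{assumption} with the constants in \eqref{consts_mat_com}, using the Frobenius-robust rank null space property. Second, invoke \cref{main_theorem} directly to obtain the Frobenius ($p=2$) convergence bound. Third, derive a parallel nuclear-norm ($p=1$) bound and interpolate via log-convexity of Schatten norms to all $p \in [1,2]$.

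For stage one, fix $\hat M, M \in \mathbb{C}^{n_1 \times n_2}$ and set $H = \hat M - M$. Take an SVD $M = U \Sigma V^*$ and let $P_U, P_V$ project onto the spans of the top $r$ left/right singular vectors of $M$. Decompose $H = H_0 + H'$, with $H_0 = P_U H + (I-P_U) H P_V$ (rank at most $2r$) and $H' = (I-P_U) H (I-P_V)$. Because $H'$ and the best rank-$r$ approximation $M_r$ have mutually orthogonal column and row spaces, $\|M_r + H'\|_1 = \|M_r\|_1 + \|H'\|_1$, so the nuclear-norm triangle inequality yields
\begin{equation*}
\|\hat M\|_1 \geq \|M\|_1 + \|H'\|_1 - \|H_0\|_1 - 2\|M_c\|_1.
\end{equation*}
Applying the Frobenius-robust rank NSP to $H$ gives $\|H_r\|_2 \leq \rho \|H_c\|_1/\sqrt{r} + \gamma \|A(H)\|_{l^2}$ for the best rank-$r$ approximation $H_r$ and its tail $H_c$. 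Combining these two decompositions -- controlling $\|H_c\|_1$ by $\|H'\|_1 + \|H_0\|_1$, exploiting $\|H_0\|_1 \leq \sqrt{2r}\|H_0\|_2 \leq \sqrt{2r}\|H\|_2$ since $\operatorname{rank}(H_0)\leq 2r$, and iterating the NSP -- produces the Kabanava--Kueng--Rauhut--Terstiege estimate
\begin{equation*}
\|H\|_2 \leq \frac{(1+\rho)^2}{(1-\rho)\sqrt{r}} \bigl[\|\hat M\|_1 - \|M\|_1 + 2\|M_c\|_1\bigr] + \frac{\gamma(3+\rho)}{1-\rho} \|A(H)\|_{l^2}.
\end{equation*}
Splitting $\|A(H)\|_{l^2} \leq \|A(\hat M) - b\|_{l^2} + \|A(M) - b\|_{l^2}$ and using $\|A(M) - b\|_{l^2} \leq \epsilon$ gives \eqref{assumption} with $C_1, C_2, c(M,b)$ exactly as in \eqref{consts_mat_com}.

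Stages two and three then follow quickly. Since $B = 0$ we have $q = 0$ and $\sqrt{C_2^2 + q} = C_2$, so $L C_1 \sqrt{C_2^2 + q} = L_A \gamma (3+\rho)/(1-\rho)$. Applying \cref{main_theorem} with $\upsilon = e^{-1}$ and letting $\tau \uparrow 1$ yields the stated bound at $p = 2$, with rate $\exp(-T(n) \lceil 2 e L_A \gamma (3+\rho)/(1-\rho) \rceil^{-1})$ matching the theorem. For $p = 1$, the same chain of inequalities in stage one, but using $\|H\|_1 \leq \|H_r\|_1 + \|H_c\|_1$ and $\|H_r\|_1 \leq \sqrt{r}\,\|H_r\|_2$ in place of the Frobenius summation, produces a nuclear-norm sharpness inequality with each constant scaled by $\sqrt{r}$ relative to the Frobenius version. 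Combined with the gap estimate $G_{C_2}(\phi_n(b), M, b) \leq \upsilon \delta / (1 - \upsilon) + \upsilon^n C_2 \|b\|_{l^2}$ implicit in the proof of \cref{main_theorem}, this delivers the $p=1$ bound. For general $p \in [1,2]$, log-convexity of Schatten $p$-norms gives $\|X\|_p \leq \|X\|_1^{2/p - 1} \|X\|_2^{2 - 2/p}$; applied to $X = \phi_n(b) - M$ with the two endpoint bounds (which differ exactly by the factor $\sqrt{r}$) this produces $\|\phi_n(b) - M\|_p \leq r^{1/p - 1/2}\,B_2$, where $B_2$ is the Frobenius error bracket. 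Expanding $r^{1/p - 1/2} \cdot r^{-1/2} = r^{(1-p)/p}$ gives the stated formula.

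The main technical obstacle is the constant-tracking in stage one: the Frobenius-robust rank NSP is naturally stated via $H$'s own SVD, while the nuclear-norm perturbation argument uses the decomposition adapted to the SVD of $M$, and reconciling these two while extracting the sharp constants $(1+\rho)^2/(1-\rho)$ and $(3+\rho)/(1-\rho)$ (as opposed to looser factors) requires careful algebraic bookkeeping through the iterated applications of the NSP and triangle inequality. Everything else is routine once \eqref{assumption} has been verified and \cref{main_theorem} has been invoked.
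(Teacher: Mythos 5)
Your proposal is correct and follows essentially the same route as the paper: establish \eqref{assumption} via the Kabanava--Kueng--Rauhut--Terstiege stability estimate for the Frobenius-robust rank null space property (yielding exactly the constants in \eqref{consts_mat_com} after splitting $\|A(\hat M - M)\|_{l^2}$ into feasibility gap plus $\|A(M)-b\|_{l^2}+\epsilon$), invoke \cref{main_theorem} with $q=0$ so that $LC_1\sqrt{C_2^2+q}=L_A\gamma(3+\rho)/(1-\rho)$, and interpolate the Schatten $p=1$ and $p=2$ endpoint bounds to obtain the factor $r^{1/p-1/2}$. The only caveat is that your intermediate sketch with the rank-$2r$ bound $\|H_0\|_1\leq\sqrt{2r}\|H_0\|_2$ would, taken literally, introduce spurious factors of $\sqrt{2}$; you correctly flag that the sharp constants require the more careful bookkeeping of the cited estimate, so this is a presentational rather than a substantive gap.
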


\begin{proof}
See \cref{lowrkmatrecthm}.
\end{proof}

In summary, if $A$ satisfies the Frobenius-robust rank null space property, then WARPd provides accelerated recovery. The condition $c(M,b)\leq \delta$ means that both the measurement error $\|AM-b\|_{l^2}+\epsilon$ and the distance of $M$ to low-rank matrices ($\|M_c\|_1$) are small. Moreover, the convergence rate in \eqref{conv_rate99} is directly related to $C_1$ and $C_2$, and hence to $\rho$ and $\gamma$.

\subsection{Example: Pauli measurements and quantum state tomography}
\label{sec_low_rk_num}

An important application of matrix recovery in physics, known as quantum state tomography (QST), is reconstructing a finite $n$-dimensional quantum mechanical system. Such a system is fully characterized by its density operator $\rho$, an $n\times n$ positive-semidefinite matrix with trace 1. For example, QST is now a routine task for designing, testing and tuning qubits in the quest of building quantum information processing devices \cite{lvovsky2009continuous}. A key structural property, for which the quantum system is called ``almost pure'', is that $\rho$ be well-approximated by a low-rank matrix. Under this assumption, QST becomes a low-rank matrix recovery problem \cite{gross2011recovering,gross2010quantum,liu2011universal}. QST requires a measurement process that is experimentally realizable and efficient.

In this example, we consider Pauli measurements, where the $A_j$ are constructed from randomly sampling tensor products of the usual Pauli matrices. Pauli measurements lead to efficient recovery of low-rank density operators \cite{gross2011recovering,gross2010quantum} and are especially easy to carry out experimentally \cite{schwemmer2014experimental,riofrio2017experimental}. It was shown in \cite{liu2011universal} that sets of $\mathcal{O}(rn\cdot \mathrm{poly}(\log(n)))$ Pauli measurements satisfy the rank restricted isometry property, and hence satisfy the Frobenius-robust rank null space property in \cref{Frobrnsp}. We can thus apply \cref{main_theorem2}.

In the general case of \eqref{eq:nuc_norm}, the proximal map of $\mathcal{J}$ is computed using the singular value decomposition (SVD). Namely, if $M=U\mathrm{diag}(\sigma(M))V^*\in\mathbb{C}^{n_1\times n_2}$, then \cite[Theorem 2.1]{cai2010singular}
\begin{equation}\setlength\abovedisplayskip{5pt}\setlength\belowdisplayskip{5pt}
\label{mat_prox_form}
\mathrm{prox}_{\tau_1\mathcal{J}}(M) =U\phi_{\tau_1}(\mathrm{diag}(\sigma(M)))V^*,\quad \phi_{\alpha}(z)=\max\left\{0,1-{\alpha}/{|z|}\right\}z,
\end{equation}
where $\phi_{\tau_1}$ is applied element-wise to the diagonal matrix $\mathrm{diag}(\sigma(M))$. Naively, the cost of applying $\mathrm{prox}_{\tau_1\mathcal{J}}$ is dominated by the $\mathcal{O}(n_1n_2\min\{n_1,n_2\})$ cost of computing the SVD \cite[Chapter 31]{trefethen1997numerical}. In this example, since the measurement matrix is sparse and, due to the thresholding, we only need the dominant eigenvalues (the matrices are Hermitian so the SVD reduces to an eigenvalue decomposition), we found it beneficial to use methods for computing eigenvalue decompositions based on matrix-vector products (see \cref{sec:mat_alg_comp}). In general, reducing the number of iterations through accelerated methods such as WARPd is particularly important in low-rank matrix recovery since the cost of applying $A$ may be large for large $n_1$ and $n_2$ (e.g., for Gaussian measurements used in phase retrieval \cite{shechtman2015phase}).\footnote{For Gaussian measurements and general measurement matrices $A_j$ in \eqref{mat_recov_measjkjkjk}, $C_A=\mathcal{O}(n_1n_2m)$ with $m \gtrsim n_1+n_2$ so there is little benefit gained by using an approximate SVD.}



As a benchmark, we compare to TFOCS \cite{becker2011templates} (available at \url{http://cvxr.com/tfocs/}), which has become a defacto method for matrix retrieval problems such as PhaseLift \cite{candes2013phaselift,fannjiang2020numerics,candes2015phase} and other related techniques. TFOCS applies an optimal first-order method \cite{auslender2006interior} to a smoothed version of the dual problem. We use the default parameters (apart from the tolerance, which we decrease to achieve higher accuracy), accelerated continuation, and a smoothing parameter $\mu=1$ (relative to $\|A\|$). In this case, the smoothing term is $\frac{\mu}{2}\|\cdot-M_0\|_2^2$, with $M_0$ updated at each restart. Hence, although we found $\mu=1$ close to optimal, tuning the value of $\mu$ is of little practical significance. As an error metric for an iterate $\tilde M$, we take the relative error
\begin{equation}\setlength\abovedisplayskip{5pt}\setlength\belowdisplayskip{5pt}
\label{error_met2}
\mathrm{Error}(\tilde M)=\big({\big|\|\tilde M\|_1-\|M^*\|_1\big|+C_2\big|\|A(\tilde M)-b\|-\epsilon\big|}\big)/{\|M^*\|_1},
\end{equation}
where $M^*$ is an optimal solution of \eqref{main_problem}, computed using a much larger number of iterations. 

For our example, we set $r=10$ and $n=2^{10}$ (corresponding to 10 qubits). We generate two independent complex standard Gaussian matrices $M_L,M_R\in\mathbb{C}^{n\times r}$ and set $\tilde M = M_LM_R^*M_RM_L^*$, $M={\tilde M}/{\mathrm{tr}(\tilde M)}.$ We then use 10\% subsampling and corrupt the measurements with 2\% Gaussian noise. We take $\epsilon=0.03\|b\|_{l^2}$, $\delta=C_2\epsilon$ ($C_1$ and $C_2$ are selected based on the theorem in \cite{liu2011universal}), $\nu=e^{-1}$ and $\tau=1$. \cref{fig:matrix_recovery} shows the results. We see the clear benefit of acceleration and that WARPd converges at a much faster rate than TFOCS.


\begin{figure}[!tbp]
  \centering
  \begin{minipage}[b]{0.48\textwidth}
    \begin{overpic}[width=\textwidth,trim={0mm 0mm 0mm 0mm},clip]{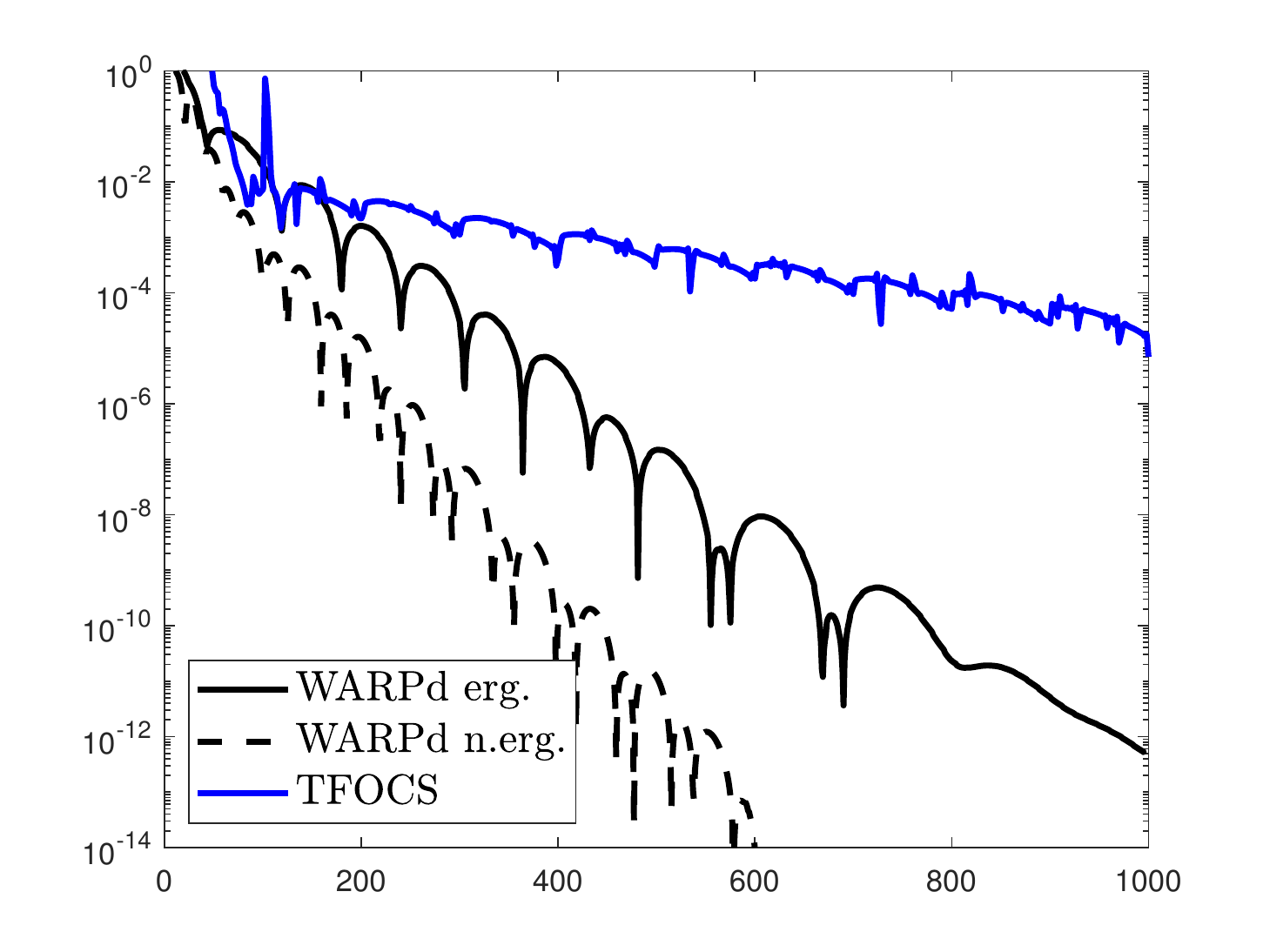}
		\put (-1,23) {\rotatebox{90}{{Error \eqref{error_met2}}}}
		\put (37,-1) {Calls to $\mathrm{prox}_{\mathcal{J}}$}
     \end{overpic}
  \end{minipage}
		  \caption{Errors for Pauli measurements example.}\vspace{-3mm}
\label{fig:matrix_recovery}
\end{figure}

\section{Matrix completion and non-uniform recovery guarantees}
\label{sec:mat_com_exm}

In this section, we consider the problem of matrix completion. Given an approximately low-rank matrix $M\in\mathbb{C}^{n_1\times n_2}$ and an index set $\Omega\subset\{1,...,n_1\}\times\{1,...,n_2\}$, we wish to recover $M$ from measurements $b$ with
$$\setlength\abovedisplayskip{5pt}\setlength\belowdisplayskip{5pt}
\left[P_{\Omega}(M)\right]_{i,j}=\begin{dcases}
M_{i,j},\quad&\text{if }(i,j)\in\Omega,\\
0,\quad &\text{otherwise},
\end{dcases}\quad\text{and} \quad b-e= A(M)=\mathrm{vect}(P_{\Omega}(M))\in\mathbb{C}^{|\Omega|}.
$$
This can be viewed as a special case of the problem considered in \cref{application2} and we consider
\begin{equation}\setlength\abovedisplayskip{5pt}\setlength\belowdisplayskip{5pt}
\label{mat_comp_problems}
\min_{\tilde M\in \mathbb{C}^{n_1\times n_2}} \|\tilde M\|_1\quad \text{s.t.}\quad\|P_{\Omega}(\tilde M)-b\|_{l^2}\leq \epsilon.
\end{equation}
However, we treat the problem in a separate section for at least three reasons. First, there are obvious rank-one matrices in the kernel of the measurement operator, and hence the Frobenius-robust rank null space property we made use of in \cref{mat_setup} cannot hold. The lack of such a global property renders matrix completion a more challenging problem. However, if certain conditions on the left and right singular vectors of the underlying low-rank matrix are imposed, essentially requiring that such vectors are uncorrelated with the canonical basis, then the matrix can be recovered with sufficiently many measurements \cite{candes2009exact,candes2010power,recht2011simpler,gross2011recovering}. Such conditions lead to \textit{non-uniform recovery guarantees}. We show how such results fall within our framework via a local version of \eqref{assumption}. Similar arguments also hold for non-uniform recovery of sparse vectors. Second, this problem has distinct algorithmic challenges when dealing with large-scale problems, discussed in \cref{sec:mat_alg_comp}. Third, (approximately) low-rank matrices pervade data science \cite{udell2019big} and matrix completion has received much attention with applications ranging from recommender systems \cite{rennie2005fast,koren2009matrix}, inferring camera motion \cite{chen2004recovering,tomasi1992shape}, multiclass learning \cite{amit2007uncovering,evgeniou2007multi} and many more in statistics, machine learning, and signal processing. 

\subsection{A general result}
\label{mat_comp_theory}

We show how a \textit{local} version of \eqref{assumption} holds for the problem of matrix completion under the existence of an approximate dual certificate. By local we mean that $\hat x =\widehat M$ is allowed to vary but $x=M$ is kept fixed. This locality does not alter \cref{main_theorem} or its proof. The existence of an approximate dual certificate is a predominant method of proving that solutions of optimization problems such as \eqref{mat_comp_problems} approximate $M$. Let $M\in\mathbb{C}^{n_1\times n_2}$ and let $M=U\Sigma V^*$ denote its singular value decomposition. Here $\Sigma\in\mathbb{R}^{r\times r}$ is diagonal (with $r\leq \min\{n_1,n_2\}$), and $U\in\mathbb{C}^{n_1\times r}, V\in\mathbb{C}^{n_2\times r}$ are partial isometries (so that $U^*U=V^*V=I_{r}$).

To state the conditions for accelerated recovery, we first introduce a few objects associated with $M$. The tangent space of the variety of rank $r$ matrices at the point $M$ is given by
$$\setlength\abovedisplayskip{5pt}\setlength\belowdisplayskip{5pt}
T_M=\{UB_1^*+B_2V^*:B_1\in\mathbb{C}^{n_2\times r},B_2\in\mathbb{C}^{n_1\times r}\}.
$$
We denote by $P_{T_M}$, the (Hilbert--Schmidt) orthogonal projection onto the tangent space and set $P_{T^\perp_{M}}=I-P_{T_M}$. Let $P=UU^*$ and $Q=VV^*$. One can easily check that
$$\setlength\abovedisplayskip{5pt}\setlength\belowdisplayskip{5pt}
P_{T^\perp_{M}}:\widetilde M\rightarrow P^{\perp}\widetilde M Q^{\perp},\quad  P_{T_M}:\widetilde M\rightarrow P\widetilde M +\widetilde M Q -P\widetilde M Q.
$$
With these in hand, we provide the following two definitions. These are slightly weaker than those usually used in the literature (for example, it is common to assume a restricted isometry property instead of \cref{RIP_mat_comp2}), but suffice to prove \cref{mat_comp_key_bound100}.

\begin{definition}
Given a measurement operator $A:\mathbb{C}^{n_1\times n_2}\rightarrow \mathbb{C}^m$, a vector $z\in\mathbb{C}^m$, with matrix $Y=A^*z\in\mathbb{C}^{n_1\times n_2}$, is an approximate dual certificate at $M$ if upon defining
\begin{equation}\setlength\abovedisplayskip{5pt}\setlength\belowdisplayskip{5pt}
\label{dual_certificate_approx}
\alpha_1=\|UV^*-P_{T_M} Y\|_2\text{ and } \alpha_2 =\|P_{T_M^\perp}Y\|,\quad \text{it holds that}\quad\alpha_2<1.
\end{equation}
\end{definition}

\begin{definition}
\label{RIP_mat_comp2}
We say that $A$ is bounded below on $T_M$ with constant $\gamma>0$ if
\begin{equation}\setlength\abovedisplayskip{5pt}\setlength\belowdisplayskip{5pt}
\label{RIP_mat_comp3}
\gamma\|Z\|_2\leq \|A(Z)\|_{l^2},\quad \forall Z\in T_M.
\end{equation}
\end{definition}

\begin{theorem}
\label{mat_comp_key_bound100}
Let $M\in\mathbb{C}^{n_1\times n_2}$  and suppose that $A$ is bounded below on $T_M$ with constant $\gamma>0$ and $z\in\mathbb{C}^m$ is an approximate dual certificate at $M$ (so that \eqref{dual_certificate_approx} holds). If $\alpha_1\|A\|<(1-\alpha_2)\gamma$, then for any $\widehat{M}\in\mathbb{C}^{n_1\times n_2}$, 
\begin{equation}\setlength\abovedisplayskip{5pt}\setlength\belowdisplayskip{5pt}
\label{inequal_mat_comp_golf}
\|\widehat{M}-M\|_2\leq\frac{\gamma+\|A\|}{(1-\alpha_2)\gamma-\alpha_1\|A\|}\left[\|\widehat{M}\|_1\!-\!\|M\|_1\!+\!\left(\!\frac{\alpha_1+1-\alpha_2}{\gamma+\|A\|}+\|z\|_{l^2}\!\!\right)\!\left\|A\left(\widehat{M}-M\right)\right\|_{l^2}\right].
\end{equation}
It follows that \eqref{assumption} is locally satisfied for the problem \eqref{eq:nuc_norm} with
$$\setlength\abovedisplayskip{5pt}\setlength\belowdisplayskip{5pt}
C_1=\frac{\gamma+\|A\|}{(1-\alpha_2)\gamma-\alpha_1\|A\|}, C_2=\left(\!\frac{\alpha_1+1-\alpha_2}{\gamma+\|A\|}+\|z\|_{l^2}\!\!\right),c(M,b)=C_2(\epsilon+\|A(M)-b\|_{l^2}),
$$
where locally refers to allowing $\hat x =\widehat M$ to vary but keeping $x=M$ fixed.
\end{theorem}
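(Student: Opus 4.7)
The plan is to follow a standard \emph{dual-certificate} / golfing-scheme argument, combined with the bounded-below property on $T_M$ (which plays the role of the restricted isometry property in this non-uniform setting).

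Write $H = \widehat M - M$ and decompose $H = P_{T_M}H + P_{T_M^\perp}H$. First I would exploit the subdifferential of the nuclear norm at $M$, namely $\partial\|M\|_1 = \{UV^* + W : W \in T_M^\perp,\ \|W\|\le 1\}$. By the duality between the operator and nuclear norms (applied on the subspace $T_M^\perp$), one may pick $W \in T_M^\perp$ with $\|W\|\le 1$ such that $\langle W, P_{T_M^\perp}H\rangle = \|P_{T_M^\perp}H\|_1$ (take $W$ to be the ``sign'' of the SVD of $P_{T_M^\perp}H$). The subgradient inequality then yields
\begin{equation*}
\|\widehat M\|_1 - \|M\|_1 \ge \langle UV^*, H\rangle + \|P_{T_M^\perp}H\|_1.
\end{equation*}

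Next I would use the approximate dual certificate $Y = A^*z$ to replace $\langle UV^*, H\rangle$ by quantities controlled by $\alpha_1,\alpha_2$ and $\|A(H)\|_{l^2}$. Writing
\begin{equation*}
\langle UV^*, H\rangle = \langle UV^* - P_{T_M}Y, H\rangle + \langle Y, H\rangle - \langle P_{T_M^\perp}Y, H\rangle,
\end{equation*}
the first term is bounded in absolute value by $\alpha_1\|P_{T_M}H\|_2$ (Cauchy--Schwarz in the Frobenius inner product, using $UV^* - P_{T_M}Y \in T_M$); the second by $\|z\|_{l^2}\|A(H)\|_{l^2}$ via $\langle A^*z, H\rangle = \langle z, A(H)\rangle$; and the third by $\alpha_2\|P_{T_M^\perp}H\|_1$ via Schatten-$\infty$/Schatten-$1$ duality restricted to $T_M^\perp$. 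Rearranging produces the key intermediate inequality
\begin{equation*}
(1-\alpha_2)\|P_{T_M^\perp}H\|_1 \le (\|\widehat M\|_1 - \|M\|_1) + \alpha_1\|P_{T_M}H\|_2 + \|z\|_{l^2}\|A(H)\|_{l^2}.
\end{equation*}

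To close the loop I would invoke the bounded-below property \eqref{RIP_mat_comp3} on $P_{T_M}H = H - P_{T_M^\perp}H$, which together with Schatten monotonicity $\|P_{T_M^\perp}H\|_2 \le \|P_{T_M^\perp}H\|_1$ gives $\gamma\|P_{T_M}H\|_2 \le \|A(H)\|_{l^2} + \|A\|\|P_{T_M^\perp}H\|_1$. Substituting this into the previous display eliminates $\|P_{T_M}H\|_2$, and the smallness assumption $\alpha_1\|A\|<(1-\alpha_2)\gamma$ ensures the resulting coefficient of $\|P_{T_M^\perp}H\|_1$ on the left is positive $D := (1-\alpha_2)\gamma - \alpha_1\|A\|>0$, yielding a bound on $\|P_{T_M^\perp}H\|_1$. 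The triangle inequality $\|H\|_2 \le \|P_{T_M}H\|_2 + \|P_{T_M^\perp}H\|_2 \le \|P_{T_M}H\|_2 + \|P_{T_M^\perp}H\|_1$, together with the bounded-below bound on $\|P_{T_M}H\|_2$ again, delivers \eqref{inequal_mat_comp_golf}.

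The main obstacle is purely algebraic bookkeeping: matching the coefficients reduces to verifying the identity $D + (\gamma+\|A\|)\alpha_1 = \gamma(\alpha_1 + 1 - \alpha_2)$, which holds by direct expansion. Finally, the second assertion follows from \eqref{inequal_mat_comp_golf} by splitting
\begin{equation*}
\|A(H)\|_{l^2} \le (\|A(\widehat M)-b\|_{l^2} - \epsilon) + \epsilon + \|A(M)-b\|_{l^2},
\end{equation*}
which reproduces the feasibility-gap term of \eqref{assumption} with multiplier $C_2$ and places the remainder $C_2(\epsilon + \|A(M)-b\|_{l^2})$ into $c(M,b)$. Since $M$ enters the argument only through the fixed subspace $T_M$ and the certificate $Y$, the resulting inequality is local in $x = M$ but uniform in $\hat x = \widehat M$, as claimed.
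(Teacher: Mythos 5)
Your proposal is correct: the subgradient inequality with a sign matrix $W\in T_M^\perp$, the three-way split of $\langle UV^*,H\rangle$ via the approximate dual certificate, the bounded-below property applied to $P_{T_M}H$ to eliminate $\|P_{T_M}H\|_2$, and the final triangle inequality reproduce \eqref{inequal_mat_comp_golf} exactly (I checked that your identity $D+(\gamma+\|A\|)\alpha_1=\gamma(\alpha_1+1-\alpha_2)$ makes the coefficients match), and the passage to the local version of \eqref{assumption} via $\|A(H)\|_{l^2}\leq(\|A(\widehat M)-b\|_{l^2}-\epsilon)+\epsilon+\|A(M)-b\|_{l^2}$ is the intended one. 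This is essentially the same standard dual-certificate argument the paper uses.
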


\begin{proof}
See \cref{lowrkmatrecthm}.
\end{proof}

The existence of (approximate) dual certificates for matrix completion has been studied extensively \cite{candes2009exact,candes2010power,gross2011recovering,recht2011simpler}. We follow \cite{ding2020leave}, which gives the current state-of-the-art sample complexity. The observation indices $\Omega$ are chosen randomly such that $\mathbb{P}((i,j)\in\Omega)=p\in[0,1)$ for all $(i,j)$ independently. Using the standard basis $\{e_je_k^*\}_{j=1,k=1}^{n_1,n_2}$, the coherence of $M$ is
$$\setlength\abovedisplayskip{5pt}\setlength\belowdisplayskip{5pt}
\mu(M)=\max\left\{\frac{n_1}{r}\max_{i\in\{1,...,n_1\}}\|U^*e_i\|_{l^2}^2,\frac{n_2}{r}\max_{i\in\{1,...,n_2\}}\|V^*e_i\|_{l^2}^2\right\}\in\left[1,\frac{\max\{n_1,n_2\}}{r}\right].
$$
It was shown in \cite{ding2020leave}\footnote{\cite{ding2020leave} considers real matrices but the result can be easily extended to complex matrices.} that if
$$\setlength\abovedisplayskip{5pt}\setlength\belowdisplayskip{5pt}
p\gtrsim \mu(M)r\log(\mu(M)r){\log(\max\{n_1,n_2\})}/{\min\{n_1,n_2\}},
$$
then with high probability\footnote{Meaning with probability at least $1 - c_1(n_1+n_2)^{-c_2}$ for constants $c_1, c_2 > 0$.}, there is an approximate dual certificate at $M$ with $\alpha_1\leq p/4$ and $\alpha_2\leq 1/2$, and $\|P_{T_M}p^{-1}P_{\Omega}P_{T_M}-P_{T_M}\|\leq 1/2.$ Let $Z\in T_M$, then
\begin{equation}\setlength\abovedisplayskip{5pt}\setlength\belowdisplayskip{5pt}
\label{fin_param_select}
\|P_{\Omega}Z\|_2^2=p\langle Z,P_{T_M}p^{-1}P_{\Omega}P_{T_M}Z\rangle\geq p\|Z\|_2^2(1-\|P_{T_M}p^{-1}P_{\Omega}P_{T_M}-P_{T_M}\|)\geq\frac{p}{2}\|Z\|_2^2.
\end{equation}
Hence, we take $\gamma=\sqrt{p/2}$ in \cref{RIP_mat_comp3} with $A=P_{\Omega}$ (treating outputs as vectors $P_\Omega(M)\in\mathbb{C}^{|\Omega|}$).

\begin{corollary}
If $p\gtrsim \mu(M)r\log(\mu(M)r)\frac{\log(\max\{n_1,n_2\})}{\min\{n_1,n_2\}},$ then with high probability the conditions of \cref{mat_comp_key_bound100} hold with $C_1\lesssim p^{-1/2}$ and $C_2$ bounded independently of all parameters. It follows that the conclusions of \cref{main_theorem,main_theorem_NN} hold.
\end{corollary}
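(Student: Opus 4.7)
The plan is to verify the hypotheses of Theorem~\ref{mat_comp_key_bound100} using the Bernoulli-sampling construction of Ding--Chen cited immediately before the corollary, and then read off the quantitative bounds on $C_1$ and $C_2$. Under the stated lower bound on $p$, that construction delivers, with high probability and simultaneously, both an approximate dual certificate $z \in \mathbb{C}^{|\Omega|}$ for which $Y = P_\Omega^* z$ satisfies $\alpha_1 \leq p/4$ and $\alpha_2 \leq 1/2$, and the operator bound $\|P_{T_M} p^{-1} P_\Omega P_{T_M} - P_{T_M}\| \leq 1/2$. The latter is precisely the input used in the computation already displayed in~\eqref{fin_param_select}, which therefore immediately yields $\|P_\Omega Z\|_{l^2} \geq \sqrt{p/2}\,\|Z\|_2$ for every $Z \in T_M$, i.e.\ Definition~\ref{RIP_mat_comp2} holds with $\gamma = \sqrt{p/2}$.

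With $\gamma,\alpha_1,\alpha_2$ in hand, I would check the compatibility condition $\alpha_1 \|A\| < (1-\alpha_2)\gamma$ required to invoke Theorem~\ref{mat_comp_key_bound100}. Taking $A = P_\Omega$, the trivial estimate $\|P_\Omega(Z)\|_{l^2}^2 = \sum_{(i,j)\in\Omega}|Z_{ij}|^2 \leq \|Z\|_2^2$ gives $\|A\| \leq 1$, so the required inequality reduces to $p/4 < (1/2)\sqrt{p/2}$, i.e.\ $p < 2$, which is automatic since $p \in [0,1)$. Theorem~\ref{mat_comp_key_bound100} therefore applies and \eqref{assumption} holds \emph{locally} at $x = M$; as the statement of that theorem records, this local version of the sharpness inequality is exactly what the restart analysis behind Theorems~\ref{main_theorem} and~\ref{main_theorem_NN} requires.

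It then remains to plug the parameters into the closed-form expressions for $C_1$ and $C_2$. For $C_1 = (\gamma + \|A\|)/((1-\alpha_2)\gamma - \alpha_1\|A\|)$, the numerator is at most $\sqrt{p/2} + 1 \lesssim 1$, while the denominator satisfies $(1/2)\sqrt{p/2} - p/4 \gtrsim \sqrt{p}$ uniformly in $p \in [0,1)$ (the worst case is $p \to 1$, where the difference is still bounded below by a positive constant), giving $C_1 \lesssim p^{-1/2}$ as claimed. The first summand of $C_2$ is $(\alpha_1 + 1 - \alpha_2)/(\gamma + \|A\|) \leq 5/4$, a universal constant, after noting $\gamma + \|A\| \geq 1$. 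For the remaining $\|z\|_{l^2}$ contribution, I would invoke the explicit norm bound supplied by the Ding--Chen golfing construction: their $z$ decomposes across independent resamples $\Omega_1,\dots,\Omega_L$ into blocks whose $l^2$-norms decay geometrically, and summing the contributions produces a bound independent of $n_1,n_2,r,p,\mu(M)$. Combining these estimates yields $C_1 \lesssim p^{-1/2}$ and $C_2 = \mathcal{O}(1)$, and the stated corollary follows by substituting into Theorems~\ref{main_theorem} and~\ref{main_theorem_NN}.

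The main obstacle is the parameter-free control of $\|z\|_{l^2}$. A naive application of $\|Y\|_2^2 = \|P_{T_M} Y\|_2^2 + \|P_{T_M^\perp} Y\|_2^2$ together with $\|z\|_{l^2} = \|Y\|_2$ only delivers $\|z\|_{l^2} \lesssim \sqrt{r} + \sqrt{\min\{n_1,n_2\}-r}\,\alpha_2$, where the second term comes from converting the $\alpha_2$ spectral-norm bound into a Frobenius bound, and this $\sqrt{n}$ factor would spoil the independence claim. A genuinely parameter-free estimate must therefore exploit the specific block structure of the golfing certificate across the resampled $\Omega_j$ and the geometric damping of each successive correction; I would track $\|z\|_{l^2}$ block-by-block through the Ding--Chen recursion rather than through a generic Frobenius-to-spectral-norm conversion, and this is the one step where I would need to consult the cited construction in detail rather than rely on the Schatten-norm bookkeeping that handles the rest of the argument.
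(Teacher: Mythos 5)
Your overall route is the same as the paper's: the corollary is proved by the paragraph immediately preceding it, namely by importing from the cited construction the certificate bounds $\alpha_1\le p/4$, $\alpha_2\le 1/2$ together with $\|P_{T_M}p^{-1}P_{\Omega}P_{T_M}-P_{T_M}\|\le 1/2$, deducing $\gamma=\sqrt{p/2}$ exactly as in \eqref{fin_param_select}, and substituting into the formulas of \cref{mat_comp_key_bound100}. Your explicit verifications --- $\|A\|=\|P_\Omega\|\le 1$, the compatibility condition $\alpha_1\|A\|<(1-\alpha_2)\gamma$ reducing to $p<2$, the uniform estimate $\tfrac12\sqrt{p/2}-p/4\gtrsim\sqrt{p}$ on $[0,1)$, and the bound $(\alpha_1+1-\alpha_2)/(\gamma+\|A\|)\le 5/4$ --- are all correct and in fact more detailed than what the paper records.

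The one genuine gap is the one you flag yourself: the term $\|z\|_{l^2}$ in $C_2$. You are right that this is the crux, but the repair you propose (tracking the golfing blocks to obtain a parameter-free constant) cannot succeed. Since $A^*z$ is the matrix supported on $\Omega$ whose nonzero entries are the entries of $z$, one has $\|z\|_{l^2}=\|Y\|_2\ge\|P_{T_M}Y\|_2\ge\|UV^*\|_2-\alpha_1=\sqrt{r}-\alpha_1$, so the certificate's $l^2$-norm is at least of order $\sqrt{r}$ no matter how finely the recursion is bookkept. The best available conclusion is $C_2\lesssim\sqrt{r}$ (which additionally requires a Frobenius --- not merely spectral --- control of $P_{T_M^\perp}Y$ from the construction, as you note); this weaker bound is the one actually consistent with the paper's follow-up remark that $C_1C_2\sim\sqrt{\min\{n_1,n_2\}}$, since $p^{-1/2}\sqrt{r}\lesssim\sqrt{\min\{n_1,n_2\}/\mu(M)}$ up to logarithmic factors. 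So either the phrase ``bounded independently of all parameters'' must be read as ``independently of $n_1$, $n_2$ and $p$'' with $C_2\lesssim\sqrt{r}$, or the literal $\mathcal{O}(1)$ claim cannot be extracted from the stated certificate properties. Your instinct to distrust the generic Frobenius-to-spectral conversion was sound; the correct conclusion to draw from it is that the dependence on $r$ is intrinsic, not that a finer pass through the cited construction will remove it.
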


Ignoring logarithmic factors, the above has a dimension scaling $C_1C_2\sim \sqrt{\min\{n_1,n_2\}}$. In general, it is impossible to eliminate this dimensional scaling \cite[Theorem 3.5]{krahmer2021convex}.

\subsection{Algorithmic considerations}
\label{sec:mat_alg_comp}

For matrix completion, the main computational burden of our algorithm is the step
$$\setlength\abovedisplayskip{5pt}\setlength\belowdisplayskip{5pt}
M^{(j+1)}=\mathrm{prox}_{\tau_1 \mathcal{J}}\big(\underbrace{M^{(j)}}_{\text{low-rank}}-\underbrace{\tau_1 A^* z_1^{(j)}}_{\text{sparse}}\big),\quad \text{where }\mathcal{J}(\cdot)=\|\cdot\|_1,
$$
which requires the application of the singular value thresholding operator in \eqref{mat_prox_form}. To reduce memory consumption, we store the iterates in low-rank factored SVD form $M^{(j)}=U^{(j)}\Sigma^{(j)}V^{(j)}$. The chosen rank of this factored form will be close to the approximate rank of $M$ when using our update rule below. The matrix $A^* z_1^{(j)}$ is sparse and its non-zero entries correspond to the indices in $\Omega$. It follows that $M^{(j)}-\tau_1 A^* z_1^{(j)}$ is a sum of a low-rank factorized matrix and a sparse matrix. Hence both it and its adjoint can be applied rapidly to vectors. We, therefore, make use of the PROPACK package \cite{larsen2004propack}, which uses iterative methods based on Lanczos bidiagonalization with partial re-orthogonalization for computing the first $r'$ singular vectors/values.\footnote{There are very efficient direct matrix factorization methods for calculating the SVD of matrices of moderate size (at most a few thousand). When the matrix is sparse, larger problems can be solved, however, the computational cost depends heavily upon the sparsity structure of the matrix. In general, for large matrices one has to resort to indirect iterative methods for calculating the leading singular vectors/values.} 
PROPACK only uses matrix-vector products, and has been found to be an efficient and stable package for computing the dominant singular values and singular vectors of large matrices. To use PROPACK in this scenario, we must supply a prediction of the dimension of the principal singular space whose singular values are larger than the given threshold. We provide an initial starting guess $r'$ ($5$ in our experiments), and at each iteration, we increase $r'$ by one for the following iteration if the dimension of the principal singular space is too small, or decrease by one if it is too large.

Following the arguments in \cref{mat_comp_theory}, we have used the parameters $C_1=\sqrt{n_1n_2/|\Omega|}$ and $C_2=1$ (as well as $\upsilon=e^{-1}$ and $\tau=1$). The value $C_2=1$ is based on empirical testing and has not been tuned. Smaller values of these constants will undoubtedly yield faster convergence for certain problems. We can use $L=1$ as a bound for $\|A\|$, but following \eqref{fin_param_select} under an incoherence assumption, we expect a local bound to scale as $\sqrt{|\Omega|/(n_1n_2)}$. We therefore took $L=\min\{1.6\sqrt{|\Omega|/(n_1n_2)},1\}$. Finally, we found that the non-ergodic version of WARPd performed slightly better than the ergodic version and so report computational results for the non-ergodic version.

\subsection{Current state-of-the-art methods}

Below we provide a brief summary of three state-of-the-art methods for matrix completion based on nuclear norm minimization, for which we compare our algorithm to in \cref{sec:mat_num_comp}. We do not claim that this is a complete list. Rather, we selected these methods for comparison based on their effectiveness, the variation of approaches, their popularity, and the availability of well-documented code.\footnote{The listed methods are all first-order methods. While nuclear norm minimization can be reformulated as a semidefinite program and solved by off-the-shelf interior point solvers, typically such methods have difficulty treating matrices larger than $n\sim 100$ because the complexity of computing each step grows quickly with $n$ (due to reliance on second-order information of the objective function). To overcome this scalability issue, the literature has focused on first-order methods.}

\subsubsection{Singular value thresholding (SVT)}\label{SVT_alg} SVT \cite{cai2010singular} performs shrinkage iterations to solve a smoothed problem (addition of an $\|M\|_2^2$ term), taking advantage of the sparsity and low rankness of the matrix iterates for approximation of the singular value thresholding operator. The algorithm uses low-rank SVD factorizations to reduce memory consumption and PROPACK. The code can be found at \url{https://statweb.stanford.edu/~candes/software/svt/code.html} and we use the default parameters suggested by \cite{cai2010singular} throughout. These parameters are based on empirical testing in \cite{cai2010singular} - we found the alternative parameters with guaranteed convergence (related to a smaller step size) to perform much worse than the results we report.

\subsubsection{Fixed point continuation with approximate SVD (FPCA)} FPCA \cite{ma2011fixed} has some similarities with SVT in that it makes use of shrinkage operations. However, the Lagrangian form of the problem (or nuclear norm regularized least-squares), $\min \mu \|M\|_1+\frac{1}{2}\|A(M)-b\|_{2}^2$, is solved with continuation for a sequence of parameters $\mu$. For the shrinkage operator, an approximate SVD is computed using a fast Monte Carlo algorithm \cite{drineas2006fast}. The code can be found at \url{https://www.math.ucdavis.edu/~sqma/FPCA.html}, and we use the given routine that selects parameters throughout.

\subsubsection{Augmented Lagrange multiplier method (ALM)} ALM \cite{lin2010augmented} is based on the augmented Lagrangian function $\|M\|_{1}+\langle Y,P_{\Omega}(M)-M-E\rangle+\mu_k\|P_{\Omega}(M)-M-E\|^2_2/2$ ($E$ is the difference between $M$ and $P_{\Omega}(M)$ and $Y$ is a dual variable). The general method of augmented Lagrange multipliers \cite{bertsekas2014constrained} applies simple updates rules for $M$, $Y$ and $E$ for a sequence of increasing $\mu_k$'s. In the case of matrix completion, a numerical difficulty is that for large $\mu_k$, the thresholding procedure (computed via an SVD) becomes numerically expensive. An inexact version of ALM was developed in \cite{lin2010augmented} to overcome this issue and shown to converge (the inexactness precludes a convergence rate analysis). The code can be found at \url{https://zhouchenlin.github.io/}, and we use the default parameters throughout. The code uses PROPACK and a simple update rule for the number of desired singular values.

\subsection{Numerical examples}
\label{sec:mat_num_comp} As our first experiment, we perform the following benchmark test often used in the literature \cite{lin2010augmented,ma2011fixed,cai2010singular}. We generate two independent standard Gaussian matrices $M_L\in\mathbb{R}^{n\times r}$, $M_R\in\mathbb{R}^{(n+20)\times r}$ and set $M = M_LM_R^*\in\mathbb{R}^{n\times(n+20)}$. Given $p\in(0,1)$, we then sample as described in \cref{mat_comp_theory}. We measure the time taken by each algorithm to achieve a relative error below $\texttt{tol}$, measured in the Frobenius norm. \cref{tab:comp_times} shows the results, where we have taken the average time over five runs for each parameter selection and we report NaN (highlighted in red) if convergence was not obtained after $5,000$ iterations or $100,000$s. For each parameter selection, we have highlighted the best average in green. Experiments were run on a modest desktop computer with a 3.4 GHz CPU. We have chosen a high accuracy tolerance $\texttt{tol}=10^{-6}$, as well as a moderate accuracy tolerance $\texttt{tol}=10^{-4}$.

In every case but one, WARPd is the fastest method, sometimes by an order of magnitude. Out of the other algorithms, ALM was the most reliable with only one NaN, but was often the slowest. A possible reason for the NaNs is the chosen value of $p$ - larger $p$ generally gives an easier problem with better convergence properties, though sometimes larger computational times due to the larger number of non-zero entries in the sparse matrices. We have deliberately shown results for varied $p$ to probe the robustness of algorithms for more challenging problems. In summary, \cref{tab:comp_times} shows clear benefits of the acceleration and demonstrates the speed and robustness of WARPd across a broad range of matrix sizes, ranks and sampling ratios.

\begin{table}
\small
\begin{center}
\begin{tabular}{ |r|c|l||r|r|r|r||r|r|r|r| }
\hline
 \multicolumn{1}{|c|}{\multirow{2}{*}{$n$}}&\multirow{2}{*}{$ r$}&\multicolumn{1}{c||}{\multirow{2}{*}{$p$}}& \multicolumn{4}{c||}{Time (s), $\texttt{tol}=10^{-4}$} &\multicolumn{4}{c|}{Time (s), $\texttt{tol}=10^{-6}$}\\
 \cline{4-11}
 & & & \multicolumn{1}{c|}{{}WARPd{}}& \multicolumn{1}{c|}{{}SVT{}} & \multicolumn{1}{c|}{{}FPCA{}} & \multicolumn{1}{c||}{{}ALM{}} & \multicolumn{1}{c|}{{WARPd}} & \multicolumn{1}{c|}{SVT} & \multicolumn{1}{c|}{{}FPCA{}} & \multicolumn{1}{c|}{{ALM}}\\
 \hline
\hline
\multirow{3}{*}{$1000$} & 10 & 0.14 &\cellcolor[HTML]{98FB98} 1.1 & 2.6\ & 1.9 & 4.5 &\cellcolor[HTML]{98FB98}1.9&3.8&2.8& 9.0\\\cdashline{2-11}
 & 30 & 0.40 & \cellcolor[HTML]{98FB98}3.4 & 7.2 & 4.6 & 7.0& \cellcolor[HTML]{98FB98}5.3 & 10.9& 6.4& 8.1\\\cdashline{2-11}
 & 60 & 0.57 & \cellcolor[HTML]{98FB98}6.2 & 14.3 & 8.4 & 8.5 & \cellcolor[HTML]{98FB98}10.2& 24.2& 12.1& 11.5\\
\hline
\multirow{3}{*}{$5000$} & 10 & {}0.02{}& \cellcolor[HTML]{98FB98}{}7.1{} & {}335.0{} & {}1093.5{} & {}203.7{} &\cellcolor[HTML]{98FB98}{}14.3{}&\multicolumn{1}{c|}{\cellcolor[HTML]{FA8072}{}\textbf{NaN}{}}&\multicolumn{1}{c|}{\cellcolor[HTML]{FA8072}{}\textbf{NaN}{}}&{}{}465.7{}\\\cdashline{2-11}
 & 30 & {}0.08{} &\cellcolor[HTML]{98FB98} {}39.0{}  & {}50.4{} & {}69.2{} & {}165.7{} &\cellcolor[HTML]{98FB98}{}57.9{}&{}83.7
&{}129.5{}&{}345.9{}\\\cdashline{2-11}
 & 60 & {}0.19{} & {}97.3{} &  {}156.0{} & \cellcolor[HTML]{98FB98}{}81.3{} & {}194.5{} &\cellcolor[HTML]{98FB98}{}160.5{}&{}257.3
&{}189.0{}&{}443.0{}\\
\hline
\multirow{3}{*}{$10000$} & 10 & {}0.01{}& \cellcolor[HTML]{98FB98}{}13.9{} & {}356.7{} & \multicolumn{1}{c|}{\cellcolor[HTML]{FA8072}{}\textbf{NaN}{}} & {}1335.7{} &\cellcolor[HTML]{98FB98}{}28.9{}&\multicolumn{1}{c|}{\cellcolor[HTML]{FA8072}{}\textbf{NaN}{}}&\multicolumn{1}{c|}{\cellcolor[HTML]{FA8072}{}\textbf{NaN}{}}&{}1787.1{}\\\cdashline{2-11}
 & 30 & {}0.04{} & \cellcolor[HTML]{98FB98}{}97.1{} &  {}1132.9{} & {}7312.0{} & {}1237.9{} &\cellcolor[HTML]{98FB98}{}164.5{}&{}1810.2
&\multicolumn{1}{c|}{\cellcolor[HTML]{FA8072}{}\textbf{NaN}{}}&{}1639.4{}\\\cdashline{2-11}
 & 60 & {}0.10{} & \cellcolor[HTML]{98FB98}{}289.5{}  & {}496.5{} & {}432.8{} & {}1160.7{} &\cellcolor[HTML]{98FB98}{}476.0{}&{}836.2
&{}507.9{}&{}1614.1{}\\
\hline
\multirow{3}{*}{$20000$} & 10 & {}0.005{}& \cellcolor[HTML]{98FB98}{}30.1{} & {}9114.3{}&\multicolumn{1}{c|}{\cellcolor[HTML]{FA8072}{}\textbf{NaN}{}}&{}4085.8{} &\cellcolor[HTML]{98FB98}{}64.7{}&\multicolumn{1}{c|}{\cellcolor[HTML]{FA8072}{}\textbf{NaN}{}}&\multicolumn{1}{c|}{\cellcolor[HTML]{FA8072}{}\textbf{NaN}{}}&\multicolumn{1}{c|}{\cellcolor[HTML]{FA8072}\textbf{NaN}{}}\\\cdashline{2-11}
 & 30 & {}0.020{} & \cellcolor[HTML]{98FB98}{}268.2{} &{}384.0{}&\multicolumn{1}{c|}{\cellcolor[HTML]{FA8072}{}\textbf{NaN}{}}&{}3732.3{} &{}\cellcolor[HTML]{98FB98}{}495.1{}{}&1283.7&\multicolumn{1}{c|}{\cellcolor[HTML]{FA8072}{}\textbf{NaN}{}}&{}9349.2{}\\\cdashline{2-11}
 & 60 & {}0.049{} & \cellcolor[HTML]{98FB98}{}1200.9{} & {}1296.8{}&\multicolumn{1}{c|}{\cellcolor[HTML]{FA8072}{}\textbf{NaN}{}}&{}6704.4{} &\cellcolor[HTML]{98FB98}{}2032.8{}&{}4461.5{}&\multicolumn{1}{c|}{\cellcolor[HTML]{FA8072}{}\textbf{NaN}{}}&{}9597.1{}\\
\hline
\end{tabular}
\end{center}
\normalsize
\vspace{2mm}
\caption{Computational times for a wide variety of parameter values for the low-rank random matrix recovery problem. All times are averaged over five runs and the best average for each experiment is shown in green. We report a `NaN' if convergence was not obtained after $5,000$ iterations or after $100,000$s.}\vspace{-2mm}
\label{tab:comp_times}
\end{table}

We now consider a real data example and an \textit{approximately} low-rank matrix. We took the data set \url{https://dataportal.orr.gov.uk/statistics/usage/estimates-of-station-usage/} of the locations of all 2569 railway stations in Great Britain. We considered two matrices, $M^{(1)}\in\mathbb{R}^{2569\times 2569}$ corresponding to the geodesic distance between all pairs of stations (rounded to the nearest 10m) and $M^{(2)}$ corresponding to the distance squared, both with $p=0.07$ (so that only approximately 7\% of the entries are sampled). \cref{fig:matrix_recovery_train} (left) shows the convergence for WARPd with $\epsilon=10^{-10}$ and $\delta=C_2\epsilon$. The accuracy of solutions of \cref{main_problem} is achieved in around $100$ and $60$ iterations respectively, with linear convergence down to this bound. \cref{fig:matrix_recovery_train} (right) shows the singular values of both matrices and explains why recovering $M^{(2)}$ is easier that $M^{(1)}$. For example,
the best rank six approximations of each matrix satisfy
$$\setlength\abovedisplayskip{5pt}\setlength\belowdisplayskip{5pt}
{\|M^{(1)}_6-M^{(1)}\|_2}/{\|M^{(1)}\|_2}\approx 0.0359,\quad {\|M^{(2)}_6-M^{(2)}\|_2}/{\|M^{(2)}\|_2}\approx 1.16\times 10^{-5}.
$$

\begin{figure}[!tbp]
  \centering
  \begin{minipage}[b]{0.48\textwidth}
    \begin{overpic}[width=\textwidth,trim={0mm 0mm 0mm 0mm},clip]{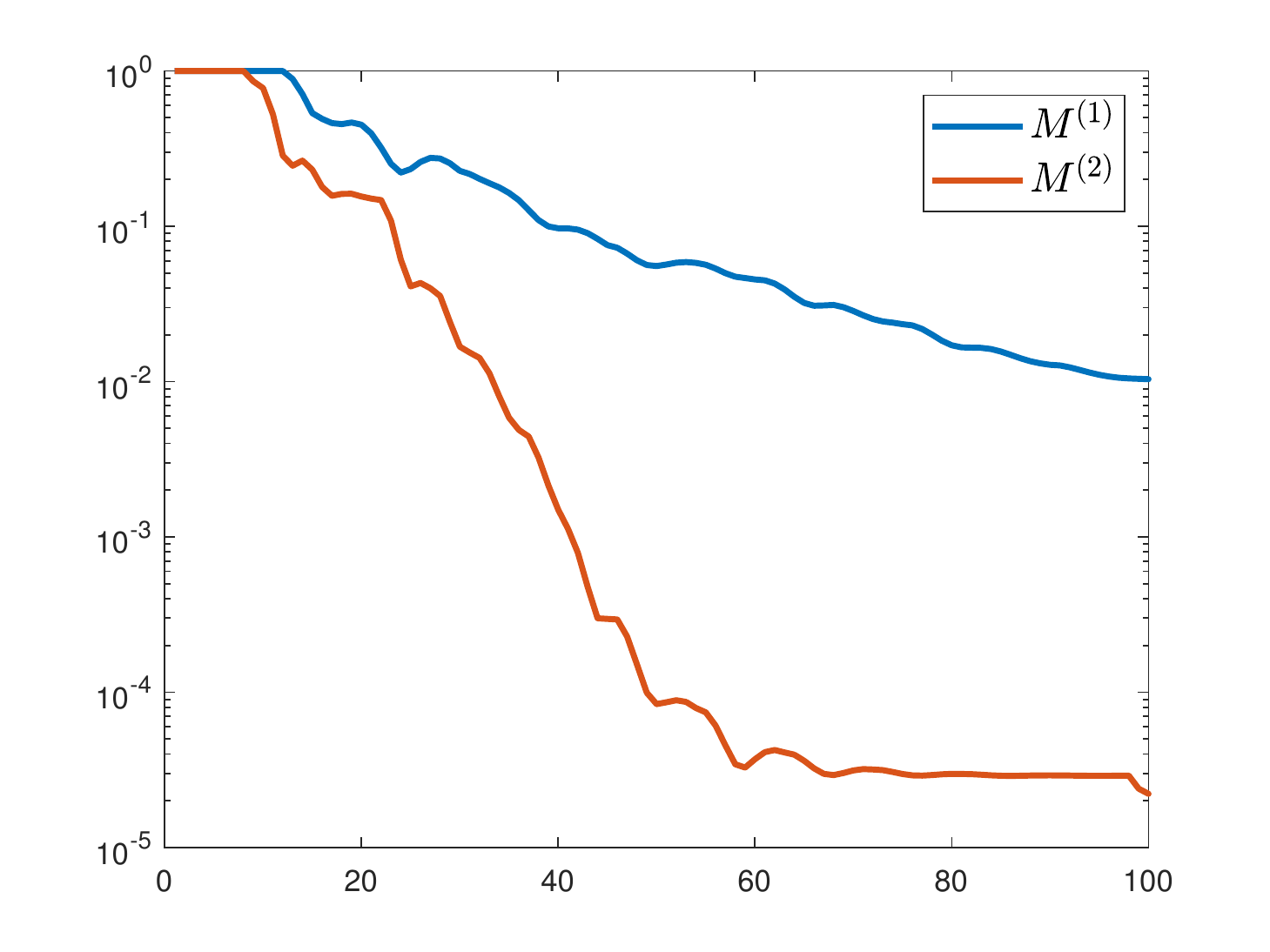}
		\put (42,0) {Iterations}
		\put (35,73) {Relative Error}
     \end{overpic}
  \end{minipage}
  \begin{minipage}[b]{0.48\textwidth}
    \begin{overpic}[width=\textwidth,trim={0mm 0mm 0mm 0mm},clip]{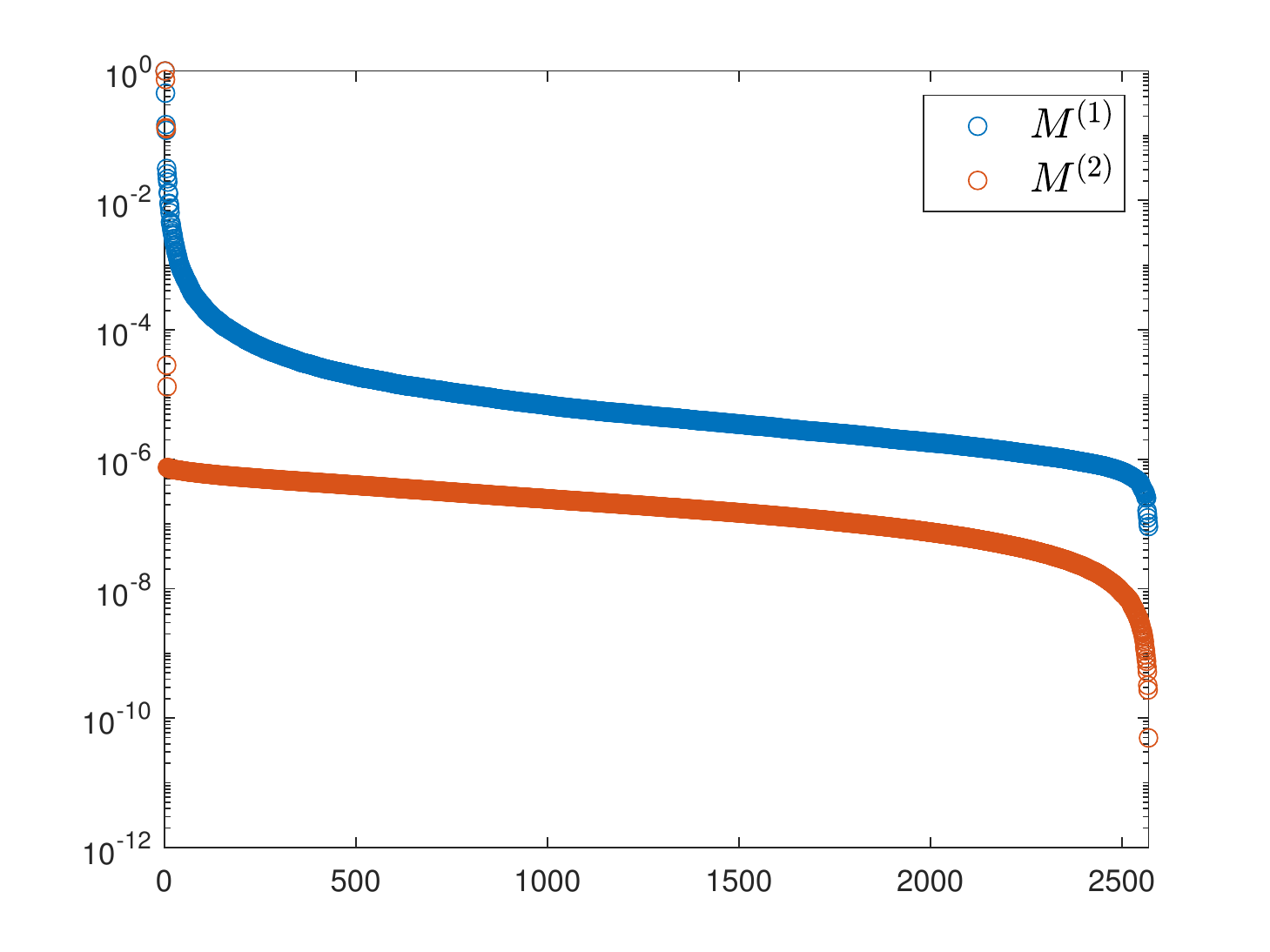}
		\put (25,73) {Relative Singular Values}
		\put (25,0) {Singular Value Number}
     \end{overpic}
  \end{minipage}
		  \caption{Results for the experiment with real data. Left: Relative error in the Frobenius norm. Right: Relative singular values (singular values normalized by the largest singular value) for each matrix.}
\label{fig:matrix_recovery_train}
\end{figure}

\section{Examples with non-trivial matrix $B$}
\label{application3}

Our final section of examples considers the case of non-trivial matrix $B$. We provide theorems for two common cases: $l^1$-analysis and TV regularization. We end with a numerical example involving shearlets and Total Generalized Variation (TGV), as well as iterative reweighting.

\subsection{Two example theorems}

\subsubsection{$l^1$-analysis with tight frames} We consider the problem (with $\mathcal{J}=0$)
\begin{equation}\setlength\abovedisplayskip{5pt}\setlength\belowdisplayskip{5pt}
\label{main_problem_last1}
\min_{x\in \mathbb{C}^N} \|D^*x\|_{l^1}\quad \text{s.t.}\quad\|Ax-b\|_{l^2}\leq \epsilon,
\end{equation}
where the columns of $D$ provide a tight frame.\footnote{Our results can be extended to frames that are not tight, but the analysis is more complicated.} Common examples of $D$ include oversampled DFT, Gabor frames, curvelets, shearlets, concatenations of orthonormal bases etc. Without loss of generality, we assume that $DD^*$ is the identity. See \cite{nam2013cosparse,duarte2013spectral,selesnick2009signal,elad2007analysis} for examples where an analysis approach \eqref{main_problem_last1} has advantages over a synthesis approach such as \eqref{nbibca}.

The following definition (which imposes no incoherence restriction on the dictionary) is a natural generalization of the well-known restricted isometry property.

\begin{definition}[\cite{candes2011compressed}]
\label{DRIP}
Let $s\in\mathbb{N}$ and let $\Sigma_s$ denote the union of all subspaces spanned by all subsets of $s$ columns of $D$. We say that the measurement matrix $A$ obeys the restricted isometry property adapted to $D$ (D-RIP) with constant $\delta_s=\delta_s(A,D)$ if
\begin{equation}\setlength\abovedisplayskip{5pt}\setlength\belowdisplayskip{5pt}
\label{DRIP_eq}
(1-\delta_s)\|v\|_{l^2}^2\leq \|Av\|_{l^2}^2\leq (1+\delta_s)\|v\|_{l^2}^2,\quad \forall v\in\Sigma_s.
\end{equation}
\end{definition}

For explicit examples where \cref{DRIP} holds, see \cite{candes2011compressed}. This definition yields the following theorem, whose proof is partly based on the arguments of \cite{candes2011compressed}. 

\begin{theorem}
\label{frame_result}
Let $t>s$ and set $\rho=s/t<1$. Suppose that
$$\setlength\abovedisplayskip{5pt}\setlength\belowdisplayskip{5pt}
\omega(A,D):=1-\rho-{\sqrt{\rho(1+\delta_t(A,D))}}/{\sqrt{1-\delta_{s+t}(A,D)}}>0, \text{ then \eqref{assumption} holds for \eqref{main_problem_last1} with}
$$
\begin{equation}\setlength\abovedisplayskip{5pt}\setlength\belowdisplayskip{5pt}
\label{dict_bounds}
\begin{split}
&C_1\!=\!{\sqrt{\rho^2+\rho}+1-\omega(A,D)}/({\omega(A,D)\sqrt{s}}), c(x,b)\!=\!2\sigma_{\text{\upshape{\textbf{s}}}}(D^*x)_{l^1}+C_2(\|Ax-b\|_{l^2}+\epsilon),\\&C_2\!=\!{\sqrt{s}\left(\sqrt{\rho^2+\rho}+1-\omega(A,D)\right)^{-1}}/{\sqrt{1-\delta_{s+t}(A,D)}}.
\end{split}
\end{equation}
It follows that the conclusions of \cref{main_theorem,main_theorem_NN} hold.
\end{theorem}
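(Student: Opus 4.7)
The plan is to reduce \cref{frame_result} to verifying the approximate sharpness condition \eqref{assumption} for \eqref{main_problem_last1} with the constants and residual in \eqref{dict_bounds}; the recovery and NN conclusions then follow from \cref{main_theorem,main_theorem_NN} applied with $\mathcal{J}\equiv 0$ and $B=D^*$ (so $q$ equals the frame length and $\|B\|=1$). The core argument is a Cand\`es--Eldar--Needell--Randall style block decomposition, modified to retain the objective-function difference $\|D^*\hat x\|_{l^1}-\|D^*x\|_{l^1}$ and feasibility gap $\|A\hat x-b\|_{l^2}-\epsilon$ as active non-negative quantities throughout, rather than dropping them by appealing to optimality of a particular minimizer.

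Set $h=\hat x-x$ and $v=D^*h$. Because $DD^*=I$, the operator $D^*$ is an isometry and $D$ a contraction, so $\|h\|_{l^2}=\|v\|_{l^2}$ and $\|Dw\|_{l^2}\leq\|w\|_{l^2}$ for all $w$. First I would derive a cone-type inequality: letting $T_0$ index the $s$ largest-magnitude entries of $D^*x$, splitting $\|D^*\hat x\|_{l^1}=\|D^*x+v\|_{l^1}$ along $T_0, T_0^c$ and applying the reverse triangle inequality gives
\begin{equation*}
\|v_{T_0^c}\|_{l^1}\leq \|v_{T_0}\|_{l^1}+\big[\|D^*\hat x\|_{l^1}-\|D^*x\|_{l^1}\big]+2\sigma_{\textbf{s}}(D^*x)_{l^1}.
\end{equation*}
Then I would partition $T_0^c$ into consecutive blocks $T_1, T_2, \dots$ of size $t$ in decreasing order of $|v|$, yielding the standard monotonicity estimate $\sum_{j\geq 2}\|v_{T_j}\|_{l^2}\leq\|v_{T_0^c}\|_{l^1}/\sqrt{t}$.

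Setting $T_{01}:=T_0\cup T_1$, both $Dv_{T_{01}}\in\Sigma_{s+t}$ and each $Dv_{T_j}\in\Sigma_t$. Combining the identity $ADv_{T_{01}}=Ah-\sum_{j\geq 2}ADv_{T_j}$ with D-RIP \eqref{DRIP_eq} gives $\sqrt{1-\delta_{s+t}}\,\|Dv_{T_{01}}\|_{l^2}\leq\|Ah\|_{l^2}+\sqrt{1+\delta_t}\sum_{j\geq 2}\|v_{T_j}\|_{l^2}$, while the contraction bound together with the orthogonal decomposition $\|v\|_{l^2}^2=\|v_{T_{01}}\|_{l^2}^2+\|v_{T_{01}^c}\|_{l^2}^2$ controls $\|h\|_{l^2}$ in terms of $\|Dv_{T_{01}}\|_{l^2}$ and $\|v_{T_{01}^c}\|_{l^2}$. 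Substituting the cone inequality together with $\|v_{T_0}\|_{l^1}\leq\sqrt{s}\|v_{T_0}\|_{l^2}\leq\sqrt{s}\|h\|_{l^2}$ and collecting $\|h\|_{l^2}$-terms on the left yields, after division,
\begin{equation*}
\omega(A,D)\,\|h\|_{l^2}\leq \frac{\|Ah\|_{l^2}}{\sqrt{1-\delta_{s+t}}}+\frac{c_\star}{\sqrt{s}}\big[\|D^*\hat x\|_{l^1}-\|D^*x\|_{l^1}+2\sigma_{\textbf{s}}(D^*x)_{l^1}\big]
\end{equation*}
for an explicit $c_\star$. Converting $\|Ah\|_{l^2}\leq(\|A\hat x-b\|_{l^2}-\epsilon)+(\|Ax-b\|_{l^2}+\epsilon)$ and absorbing the second bracket into $c(x,b)$ produces \eqref{assumption} with the stated $C_1, C_2$.

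The principal obstacle is the final coefficient match: to recover exactly the $1-\rho$ term inside $\omega(A,D)$ (rather than the looser $1-\sqrt{\rho}$ produced by crudely bounding $\|v_{T_{01}^c}\|_{l^2}\leq\sum_{j\geq 2}\|v_{T_j}\|_{l^2}$), one must exploit either the Pythagorean identity $\|v_{T_{01}^c}\|_{l^2}^2=\sum_{j\geq 2}\|v_{T_j}\|_{l^2}^2$ or the inner-product estimate $\|v_{T_{01}}\|_{l^2}^2=\langle Dv_{T_{01}},h\rangle\leq\|Dv_{T_{01}}\|_{l^2}\|h\|_{l^2}$ to square-root away a factor of $\rho$, and then carefully account so that the prefactors of the objective gap, feasibility gap, and $\sigma_{\textbf{s}}$-term reorganize into exactly $C_1, C_1C_2, 2C_1$. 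With \eqref{assumption} verified, \cref{main_theorem} with optimal $\upsilon=e^{-1}$ delivers the claimed exponentially convergent $l^2$-bound, and \cref{main_theorem_NN} upgrades it to a stable and accurate NN approximation.
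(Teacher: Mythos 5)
Your overall strategy is the right one and matches the paper's: the proof is a Cand\`es--Eldar--Needell--Randall-type block argument in which the cone inequality is derived while keeping the objective difference $\|D^*\hat x\|_{l^1}-\|D^*x\|_{l^1}$ explicit (rather than invoking minimality of $\hat x$), the tail blocks are controlled through the D-RIP \eqref{DRIP_eq} applied to $Dv_{T_{01}}\in\Sigma_{s+t}$ and $Dv_{T_j}\in\Sigma_t$, and $\|A(\hat x-x)\|_{l^2}$ is split into the feasibility gap plus a term absorbed into $c(x,b)$. All of these ingredients are correctly identified, as is the reduction of the recovery and NN conclusions to \cref{main_theorem,main_theorem_NN} once \eqref{assumption} is verified with $\mathcal{J}=0$ and $B=D^*$.

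The genuine gap is exactly where you flag it, and it is more than a bookkeeping issue. The chain you actually write out --- $\|h\|_{l^2}\le\|Dv_{T_{01}}\|_{l^2}+\|v_{T_{01}^c}\|_{l^2}$, then $\|v_{T_{01}^c}\|_{l^2}\le\sum_{j\ge2}\|v_{T_j}\|_{l^2}\le(\sqrt{s}\,\|v_{T_0}\|_{l^2}+P)/\sqrt{t}$ with $P:=\|D^*\hat x\|_{l^1}-\|D^*x\|_{l^1}+2\sigma_{\mathbf{s}}(D^*x)_{l^1}$, then $\|v_{T_0}\|_{l^2}\le\|h\|_{l^2}$ --- collapses to a left-hand coefficient $1-\sqrt{\rho}\,(1+Q)$ with $Q=\sqrt{(1+\delta_t)/(1-\delta_{s+t})}$, not to $\omega(A,D)=1-\rho-\sqrt{\rho}\,Q$. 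Since $Q\ge1$ always, the crude coefficient is positive only when $\rho<1/4$, whereas the hypothesis $\omega(A,D)>0$ permits $\rho$ up to roughly $0.38$; for instance, with $\rho=1/4$ and small RIP constants one has $\omega\approx1/4>0$ while $1-\sqrt{\rho}(1+Q)\le0$, so the argument as detailed cannot prove the theorem under its stated hypothesis --- the passage from $1-\sqrt{\rho}$ to $1-\rho$ is what makes the statement true, not a cosmetic sharpening. Your two proposed repairs (the Pythagorean split $\|h\|_{l^2}^2=\|v_{T_{01}}\|_{l^2}^2+\|v_{T_{01}^c}\|_{l^2}^2$ and the identity $\|v_{T_{01}}\|_{l^2}^2=\langle Dv_{T_{01}},h\rangle\le\|Dv_{T_{01}}\|_{l^2}\|h\|_{l^2}$) are the right kind of tool, but neither is carried out, and their straightforward implementations do not land on \eqref{dict_bounds}: the first leads to a quadratic inequality in $\|h\|_{l^2}$ whose solution carries an extra term with a $1/\sqrt{\omega}$ prefactor, and the second yields a bound of the shape $\|h\|_{l^2}\le(1+\rho)\|Dv_{T_{01}}\|_{l^2}+2P/\sqrt{t}$, both of which give constants different from the stated $C_1$ and $C_1C_2$. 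The precise recombination that produces $1-\rho$ on the left and the prefactor $\sqrt{\rho^2+\rho}+1-\omega(A,D)$ on the right is the technical heart of the proof, and your proposal asserts rather than demonstrates it.
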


\begin{proof}
See \cref{frame_proof}.
\end{proof}

In summary, if $A$ satisfies the D-RIP, then WARPd provides accelerated recovery via \eqref{main_problem_last1}. Using $\delta_{t}<\delta_{s+t}$, the condition $\omega(A,D)>0$ is satisfied if $
\delta_{s+t}(A,D)<\frac{1+\rho^2-3\rho}{1+\rho^2-\rho}.$

\subsubsection{Total variation minimization}
\label{sec:TVTVTVT}

\!{}TV minimization \cite{rudin1992nonlinear} is widely used for image restoration tasks such as denoising, deblurring and inpainting \cite{chambolle2010introduction,chambolle2004algorithm,parisotto2020higher,bredies2010total}, as well as compressed sensing \cite{candes2006robust,lustig2007sparse}.  We consider a 2D signal $X\in\mathbb{C}^{\hat N\times \hat N}$. For vectorized $x=\mathrm{vect}(X)\in\mathbb{C}^N,N=\hat N^2$,  $\nabla\in\mathbb{C}^{2N\times N}$ is given by $\nabla=(\nabla_1\hspace{2mm} \nabla_2)^\top$ with
$$\setlength\abovedisplayskip{5pt}\setlength\belowdisplayskip{5pt}
[\nabla_1 X]_{i_1,i_2}=X_{i_1+1,i_2}-X_{i_1,i_2},\quad [\nabla_2 X]_{i_1,i_2}=X_{i_1,i_2+1}-X_{i_1,i_2},
$$
where $X_{\hat N+1,i_2}=X_{1,i_2},X_{i_1,\hat N+1}=X_{i_1,1}$. The periodic anisotropic TV-seminorm is given by
$$\setlength\abovedisplayskip{3pt}\setlength\belowdisplayskip{5pt}
\|X\|_{\mathrm{TV}}=\|x\|_{\mathrm{TV}}=\|\nabla x\|_{l^1}=\! \sum_{i_1,i_2=1}^{\hat N}\! {|X_{i_1+1,i_2}-X_{i_1,i_2}|+|X_{i_1,i_2+1}-X_{i_1,i_2}|}.
$$ We therefore consider the problem (with $\mathcal{J}=0$ and $B=\nabla$)
\begin{equation}\setlength\abovedisplayskip{5pt}\setlength\belowdisplayskip{5pt}
\label{main_problem_last2}
\min_{x\in \mathbb{C}^N} \|x\|_{\mathrm{TV}}\quad \text{s.t.}\quad\|Ax-b\|_{l^2}\leq \epsilon.
\end{equation}
For accurate and stable recovery guarantees for this problem, see \cite{needell2013stable,needell2013near}, which exploit the connection between the TV-seminorm and Haar wavelet coefficients. 
For sampling strategies for Fourier and binary measurements, see \cite{krahmer2013stable,poon2015role,adcock2021improved}. It is beyond the scope of this paper to discuss how all of these results fit into our framework so we consider the following general setting. Recall that a matrix $A\in\mathbb{C}^{m\times N}$ satisfies the restricted isometry property (RIP) of order $s$ if there exists $\delta_s(A)\in(0,1)$ such that for any $s$-sparse vector $z\in\mathbb{C}^N$,
$$\setlength\abovedisplayskip{5pt}\setlength\belowdisplayskip{5pt}
(1-\delta_s(A))\|z\|_{l^2}^2\leq \|Az\|_{l^2}^2\leq (1+\delta_s(A))\|z\|_{l^2}^2.
$$
The following theorem \cite[Theorem 17.17]{adcock2021compressive}\footnote{The result of \cite{adcock2021compressive} considered the isotropic version of the TV-seminorm. Both versions are equivalent up to a factor of $\sqrt{2}$ and hence the theoretical result is the same. We have considered the anisotropic version to fit into \cref{main_problem}. It is also straightforward to adapt WARPd to the isotropic TV-seminorm by adapting the proximal maps in \cref{alg:inner_iterations}.} provides a version of \eqref{assumption} (it is possible to chase down the explicit constants by studying the proof), and, to facilitate \cref{biiuvbakv}, we have stated the conclusion slightly differently to \cite{adcock2021compressive}. 

\begin{theorem}[\cite{adcock2021compressive}]
\label{TV_theorem}
Let $\hat N\geq s \geq 2$, $\Phi\in\mathbb{R}^{\hat N^2\times \hat N^2}$ be the matrix of the two-dimensional
discrete Haar wavelet sparsifying transform and $A\in\mathbb{C}^{m\times \hat N^2}$. Suppose that $A\Phi$ has the
RIP of order $t\gtrsim  s\log(\hat N)\log^2(2\hat N^2/s)$ with constant $\delta_t(A\Phi)\leq 1/2$. Then for any $x,\hat x\in \mathbb{C}^{\hat N^2}$,
\begin{equation*}\setlength\abovedisplayskip{5pt}\setlength\belowdisplayskip{5pt}
\|\hat x-x\|_{l^2}\lesssim \left(\|\hat x\|_{\mathrm{TV}}-\|x\|_{\mathrm{TV}}+\sigma_{\text{\upshape{\textbf{s}}}}(\nabla x)_{l^1}\right)/\sqrt{\smash[b]{{s\log(\hat N)}}}+(\|A\hat x-b\|_{l^2}-\epsilon)+(\|A x-b\|_{l^2}+\epsilon).
\end{equation*}
\end{theorem}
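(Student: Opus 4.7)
The plan is to reduce the claimed TV-bound to an $\ell^1$ statement about the Haar coefficients of the error and then invoke the RIP of $A\Phi$. Set $e := \hat x - x$ and $h := \Phi^*e$. Since $\Phi$ is orthonormal, $\|e\|_{l^2} = \|h\|_{l^2}$, and the triangle inequality immediately gives
\[
\|A\Phi h\|_{l^2} = \|Ae\|_{l^2} \leq \bigl(\|A\hat x - b\|_{l^2} - \epsilon\bigr) + \bigl(\|Ax - b\|_{l^2} + \epsilon\bigr),
\]
which already accounts for the two measurement summands on the right-hand side. The remaining task is therefore to show
\[
\|h\|_{l^2} \lesssim \frac{\|\hat x\|_{\mathrm{TV}} - \|x\|_{\mathrm{TV}} + \sigma_{\textbf{s}}(\nabla x)_{l^1}}{\sqrt{s\log\hat N}} + \|A\Phi h\|_{l^2}.
\]

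First I would derive a robust null space property from $\delta_t(A\Phi) \leq 1/2$ (a standard consequence of RIP, e.g.\ Foucart--Rauhut Theorem~6.13): for any index set $S$ of size $t$ and any $z$,
\[
\|z_S\|_{l^2} \lesssim \|z_{S^c}\|_{l^1}/\sqrt{t} + \|A\Phi z\|_{l^2}.
\]
Applying this to $h$ with $S$ the support of the $t$ largest Haar coefficients of $e$, and combining with Stechkin's inequality $\|h_{S^c}\|_{l^2} \leq \|h_{S^c}\|_{l^1}/\sqrt{t}$, reduces everything to controlling the $\ell^1$ tail of the Haar expansion of $\hat x - x$ in terms of TV quantities.

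The crux, and the main obstacle, is estimating $\|h_{S^c}\|_{l^1}$ by the TV data. For this I would invoke the discrete bivariate Cohen--DeVore--Petrushev--Xu inequality in the sharp form used by Needell--Ward: for any 2D signal $w$ and any $k$,
\[
\sigma_k(\Phi^*w)_{l^1} \lesssim \sqrt{\log\hat N}\,\|\nabla w\|_{l^1}\big/\sqrt{k/\log\hat N}\cdot\text{(bookkeeping)},
\]
i.e.\ bivariate BV signals have quasi-sparse Haar coefficients, with a $\sqrt{\log\hat N}$ loss coming from the dyadic scale count. With $k$ chosen proportional to the RIP order $t \asymp s\log\hat N\log^2(2\hat N^2/s)$, this produces exactly the $\sqrt{s\log\hat N}$ denominator in the bound. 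Finally I would convert $\|\nabla e\|_{l^1}$ into the advertised objective-gap-plus-defect form: split $\nabla e = \nabla\hat x - \nabla x$ on the top-$s$ support $T$ of $\nabla x$ and its complement, and use the reverse triangle inequality
\[
\|\hat x\|_{\mathrm{TV}} - \|x\|_{\mathrm{TV}} \geq \|(\nabla e)_{T^c}\|_{l^1} - \|(\nabla e)_T\|_{l^1} - 2\sigma_{\textbf{s}}(\nabla x)_{l^1},
\]
absorbing the $\|(\nabla e)_T\|_{l^1}$ term through the RIP inequality after invoking Cauchy--Schwarz on the support $T$ of size $s$. The main technical subtlety will be balancing the three interacting parameters (the RIP order $t$, the TV-sparsity $s$, and the dyadic scale factor $\log\hat N$) so that the final denominator sharpens from the naive $\sqrt{s}$ of classical RIP recovery to the improved $\sqrt{s\log\hat N}$ appearing in the statement.
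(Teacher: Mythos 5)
This statement is imported without proof from \cite[Thm.~17.17]{adcock2021compressive} (the paper even notes it has ``stated the conclusion slightly differently''), so there is no in-paper argument to compare against; your outline does, however, follow the route of the cited proof, which descends from Needell--Ward: pass to the Haar coefficients $h=\Phi^*(\hat x-x)$, extract a robust null space property from $\delta_t(A\Phi)\le 1/2$, bound the $\ell^1$ tail of $h$ by the total variation of the error, and close with a cone-splitting argument. That skeleton is right, and the triangle-inequality treatment of the measurement terms is fine.

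The one ingredient everything hinges on is mis-stated, and in the form you wrote it the argument does not deliver the claimed rate. The Cohen--DeVore--Petrushev--Xu/Needell--Ward inequality says that for a \emph{mean-zero} image $w$ the decreasing rearrangement of its bivariate Haar coefficients obeys $|c|_{(k)}\lesssim \|\nabla w\|_{l^1}/k$, so that $\sigma_t(\Phi^*w)_{l^1}\lesssim \|\nabla w\|_{l^1}\log(\hat N^2/t)$; only a harmonic-tail logarithm is lost, and dividing by $\sqrt{t}$ with $t\asymp s\log(\hat N)\log^2(2\hat N^2/s)$ cancels the $\log(2\hat N^2/s)$ and produces exactly the $1/\sqrt{s\log \hat N}$ factor. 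Your version decays like $1/\sqrt{k}$, which taken at face value yields a denominator of order $s\log^2(2\hat N^2/s)$, not $\sqrt{s\log\hat N}$ --- the inflated RIP order exists precisely to pay for the logarithm, so this formula must be corrected for the proof to go through. You also need to park the DC coefficient inside the ``head'' set $S$, since the decay estimate controls only the genuine wavelet coefficients. Two further points you flag but do not resolve: (i) after Cauchy--Schwarz, $\|(\nabla e)_T\|_{l^1}\le\sqrt{s}\,\|\nabla e\|_{l^2}\lesssim\sqrt{s}\,\|e\|_{l^2}$, and substituting back gives a self-referential term with coefficient $\asymp 1/\sqrt{\log\hat N}$, which is \emph{not} automatically below $1$ for $\hat N$ as small as $2$; the absorption only works because the implied constant in the hypothesis $t\gtrsim s\log(\hat N)\log^2(2\hat N^2/s)$ may be taken large. (ii) Your reverse-triangle step necessarily produces $2\sigma_{s}(\nabla x)_{l^1}$, and any correct proof must: with coefficient $1$ the displayed bound fails literally for $\hat x=0$ and $x$ a large multiple of a kernel element of $A$, so the factor two can only be hidden in the $\lesssim$ when the objective gap is nonnegative --- a wrinkle of the statement rather than of your argument, but one worth recording.
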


The following shows WARPd allows accelerated recovery via \eqref{main_problem_last2} if $A\Phi$ satisfies the RIP.

\begin{corollary}
\label{biiuvbakv}
Suppose that the conditions of \cref{TV_theorem} hold. Then \eqref{assumption} holds, with
$$\setlength\abovedisplayskip{5pt}\setlength\belowdisplayskip{5pt}
C_1\lesssim 1/\sqrt{\smash[b]s\log(\hat N)},\quad C_2\lesssim \sqrt{\smash[b]s\log(\hat N)},\quad c(x,b)=\sigma_{\text{\upshape{\textbf{s}}}}(\nabla x)_{l^1}+C_2(\|A x-b\|_{l^2}+\epsilon),
$$
for the problem \eqref{main_problem_last2}. It follows that the conclusions of \cref{main_theorem,main_theorem_NN} hold.
\end{corollary}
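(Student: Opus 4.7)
\textbf{Proof plan for \cref{biiuvbakv}.} The plan is to show that the estimate supplied by \cref{TV_theorem} is already in the form of \eqref{assumption} after an elementary rearrangement, and then to invoke \cref{main_theorem,main_theorem_NN} directly. Since \eqref{main_problem_last2} corresponds to $\mathcal{J}\equiv 0$ and $B=\nabla$ in the general setup, the objective function difference appearing on the right-hand side of \eqref{assumption} reduces to exactly $\|\hat x\|_{\mathrm{TV}}-\|x\|_{\mathrm{TV}}$, and the feasibility gap is $\|A\hat x-b\|_{l^2}-\epsilon$.

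The next step is to factor $1/\sqrt{s\log(\hat N)}$ out of the bound in \cref{TV_theorem}. This produces
\begin{equation*}
\|\hat x-x\|_{l^2}\lesssim \frac{1}{\sqrt{s\log(\hat N)}}\Big[\big(\|\hat x\|_{\mathrm{TV}}-\|x\|_{\mathrm{TV}}\big)+\sqrt{s\log(\hat N)}\big(\|A\hat x-b\|_{l^2}-\epsilon\big)+\sigma_{\text{\upshape{\textbf{s}}}}(\nabla x)_{l^1}+\sqrt{s\log(\hat N)}\big(\|A x-b\|_{l^2}+\epsilon\big)\Big].
\end{equation*}
Comparing term-by-term with \eqref{assumption} identifies $C_1\lesssim 1/\sqrt{s\log(\hat N)}$, $C_2\lesssim\sqrt{s\log(\hat N)}$, and $c(x,b)=\sigma_{\text{\upshape{\textbf{s}}}}(\nabla x)_{l^1}+C_2(\|Ax-b\|_{l^2}+\epsilon)$; note the two extra $x$-dependent pieces (the compressibility defect and the residual term at $x$) are both collected into $c(x,b)$, which is consistent with the interpretation of $c(\cdot,b)$ as the approximation term. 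The product $C_1C_2\lesssim 1$ is independent of $s$ and $\hat N$, which is what enables acceleration.

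With \eqref{assumption} verified for \eqref{main_problem_last2} in this form, the final step is to apply \cref{main_theorem,main_theorem_NN} as black boxes, which yields the claimed stable linear-convergence recovery guarantee and the corresponding NN approximation result. I do not expect any genuine obstacle: all of the analytic work is already carried out in \cref{TV_theorem}, so the proof is essentially bookkeeping — the only points requiring care are (i) correctly pulling the factor $1/\sqrt{s\log(\hat N)}$ out so that the coefficient of the feasibility gap matches $C_2$, and (ii) noting that the $x$-dependent residual term $\|Ax-b\|_{l^2}+\epsilon$ must be absorbed into $c(x,b)$ rather than into the feasibility-gap contribution, since the template \eqref{assumption} only allows the $\hat x$-dependent feasibility gap to be multiplied by $C_2$.
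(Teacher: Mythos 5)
Your proposal is correct and is precisely the argument the paper intends: \cref{TV_theorem} is deliberately stated so that factoring out $1/\sqrt{s\log(\hat N)}$ puts the bound directly in the form of \eqref{assumption} with $C_1C_2\lesssim 1$, the objective difference reducing to $\|\hat x\|_{\mathrm{TV}}-\|x\|_{\mathrm{TV}}$ since $\mathcal{J}=0$ and $B=\nabla$, and the $x$-dependent residual and compressibility terms absorbed into $c(x,b)$. Your two cautionary points (matching the feasibility-gap coefficient to $C_2$, and keeping the $\|Ax-b\|_{l^2}+\epsilon$ term out of the feasibility gap) are exactly the right bookkeeping.
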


\subsection{A numerical example involving shearlets and TGV}
\label{sec:num_shear_TGV}

\begin{figure}[!tbp]
  \centering
	\vspace{1mm}
  \begin{minipage}[b]{0.32\textwidth}
	\centering
    \begin{overpic}[width=\textwidth,trim={25mm 0mm 25mm 0mm},clip]{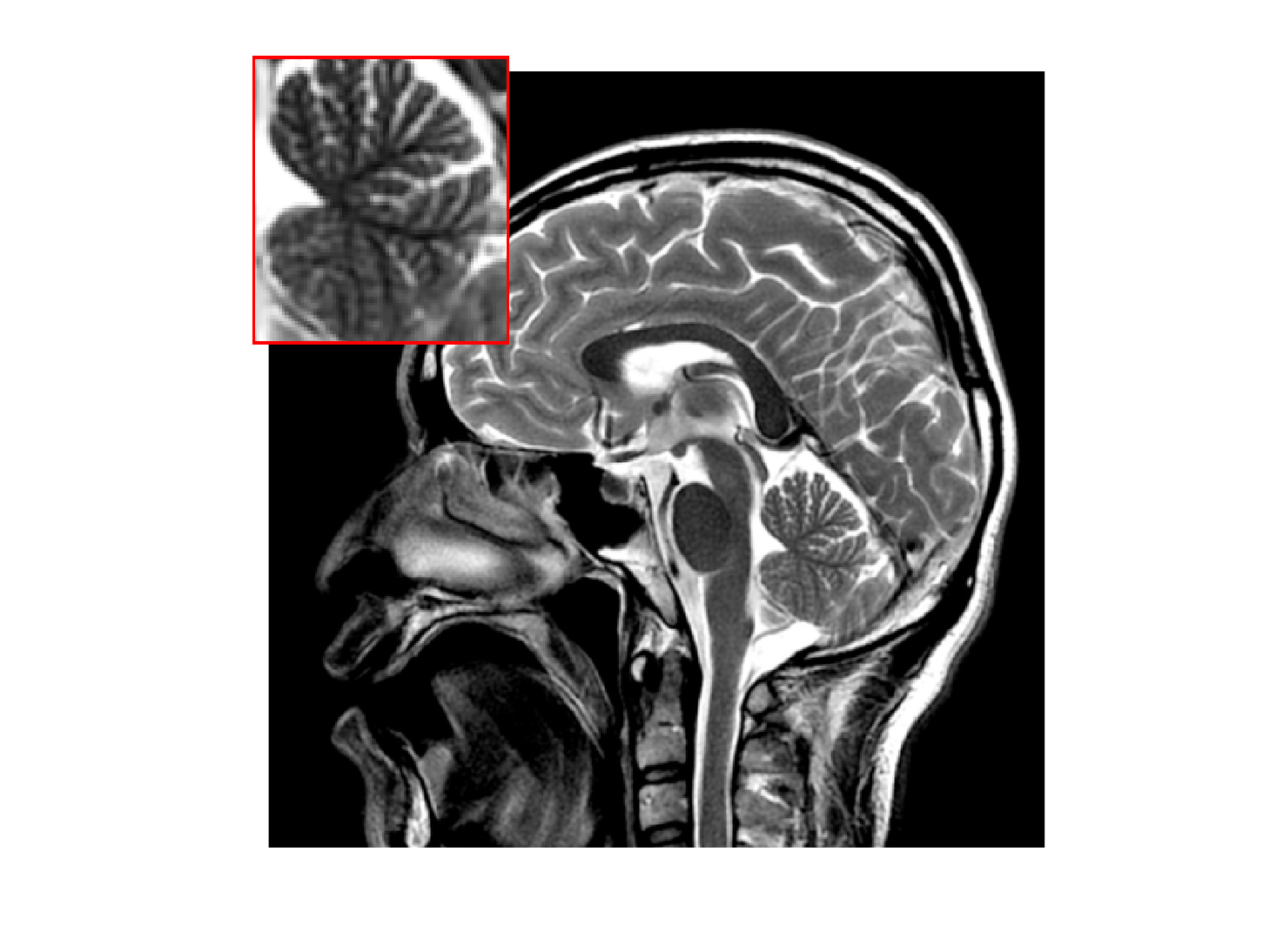}
    \put (30,100) {Test Image}
     \end{overpic}
  \end{minipage}
	\begin{minipage}[b]{0.32\textwidth}
	\centering
    \begin{overpic}[width=\textwidth,trim={25mm 0mm 25mm 0mm},clip]{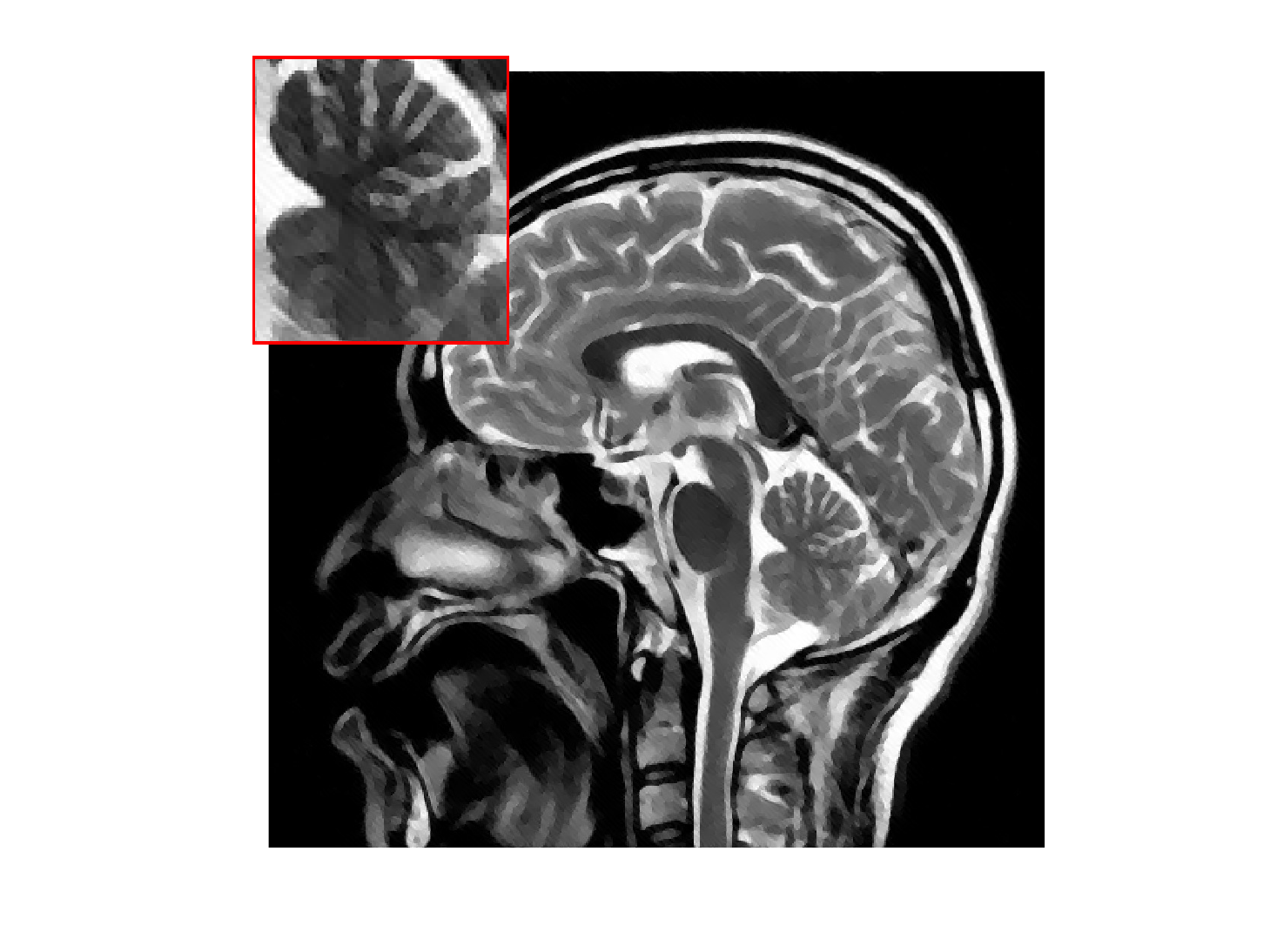}
    \put (10,100) {TV Recon., PSNR=27.9}
     \end{overpic}
  \end{minipage}
  \begin{minipage}[b]{0.32\textwidth}
    \begin{overpic}[width=\textwidth,trim={25mm 0mm 25mm 0mm},clip]{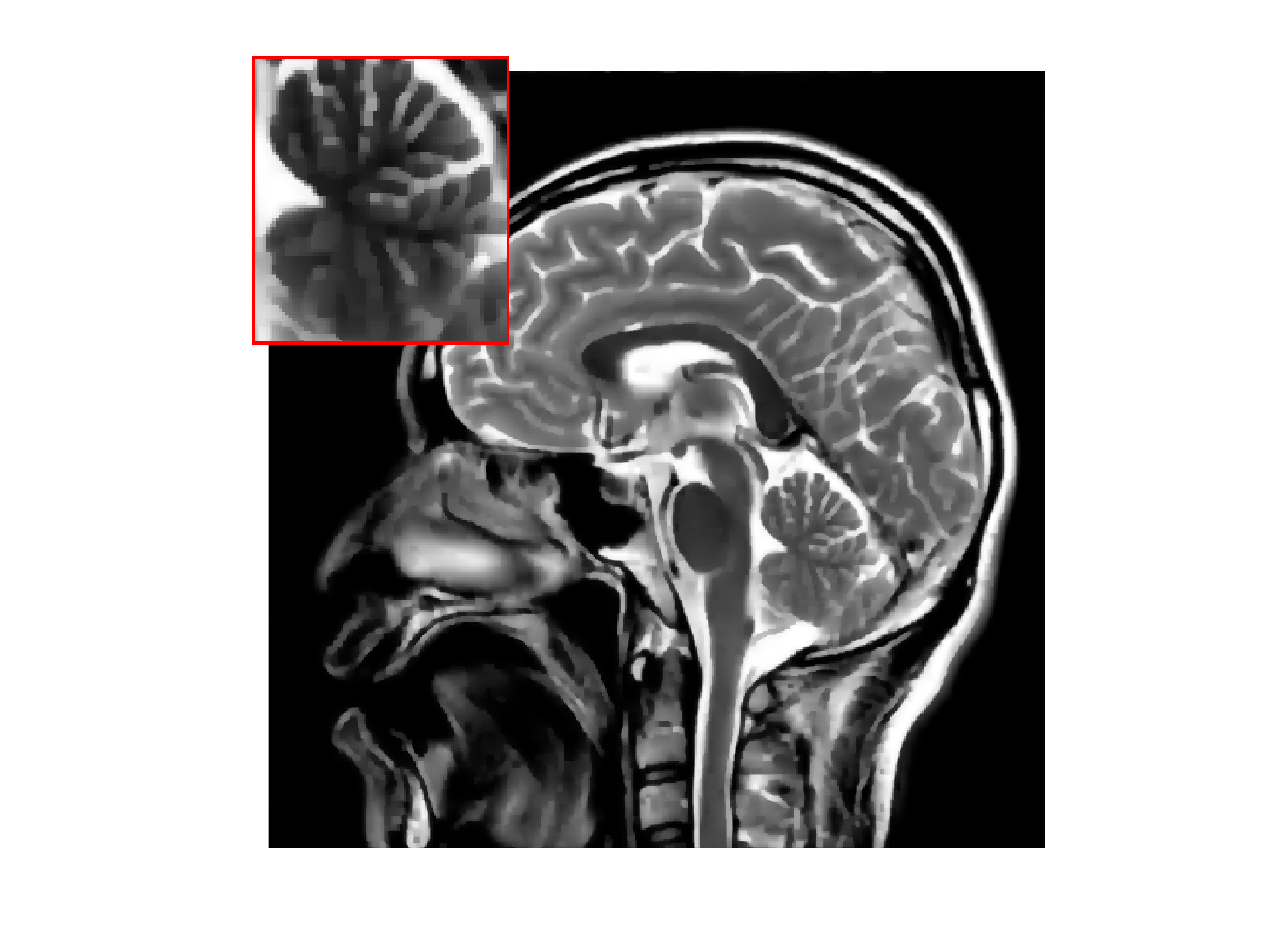}
		\put (8,100) {TGV Recon., PSNR=29.8}
     \end{overpic}
  \end{minipage}
	\vspace{-7mm}
	  \caption{Left: $512\times 512$ test image with pixel values scaled to $[0,1]$, the red box shows a zoomed in section. Middle: Converged reconstruction using TV. Right: Converged reconstruction using TGV (using $\alpha_0=0.4$ and $\alpha_1=0.2$, see \eqref{TGV_def} for meaning of parameters). Both reconstructions were computed using WARPd.}
\label{fig:analysis}
\end{figure}

The goal of this final numerical example is to demonstrate the flexibility of our algorithm, rather than promote a particular transform or regularizer. \cref{fig:analysis} (left) shows the used test image. We let $A$ be a DFT, $15\%$ subsampled according to an inverse square law density \cite{Felix_2014}. This sampling pattern has recently been shown to be optimal for TV reconstruction \cite{adcock2021improved}. The measurements are corrupted with $5\%$ Gaussian noise. We first use WARPd to reconstruct the image via \eqref{main_problem_last2}, the results are shown in \cref{fig:analysis} (middle). Whilst convergence to a solution of \eqref{main_problem_last2} was rapid, the reconstruction shows the typical artifacts of TV regularization such as staircasing. Next, we replace the TV regularizer with the (discrete) TGV regularizer \cite{bredies2010total}
\begin{equation}\setlength\abovedisplayskip{5pt}\setlength\belowdisplayskip{5pt}
\label{TGV_def}
\mathrm{TGV}_{\alpha}^2(x)=\min_{v\in\mathbb{C}^{2N}}\alpha_1\|\nabla x -v\|+\alpha_0\left\|\begin{pmatrix} \nabla_1 v_x & \frac{1}{2}\left(\nabla_2 v_x+\nabla_1 v_y\right)\\
\frac{1}{2}\left(\nabla_2 v_x+\nabla_1 v_y\right) & \nabla_2 v_y
\end{pmatrix}\right\|_{1},
\end{equation}
which has been proposed to improve on these issues by involving higher order derivatives. The improved results are shown in \cref{fig:analysis} (right). Again, convergence to a solution of the optimization problem was rapid.

\begin{figure}[!tbp]
  \centering
	\vspace{1mm}
  \begin{minipage}[b]{0.32\textwidth}
	\centering
    \begin{overpic}[width=\textwidth,trim={25mm 0mm 25mm 0mm},clip]{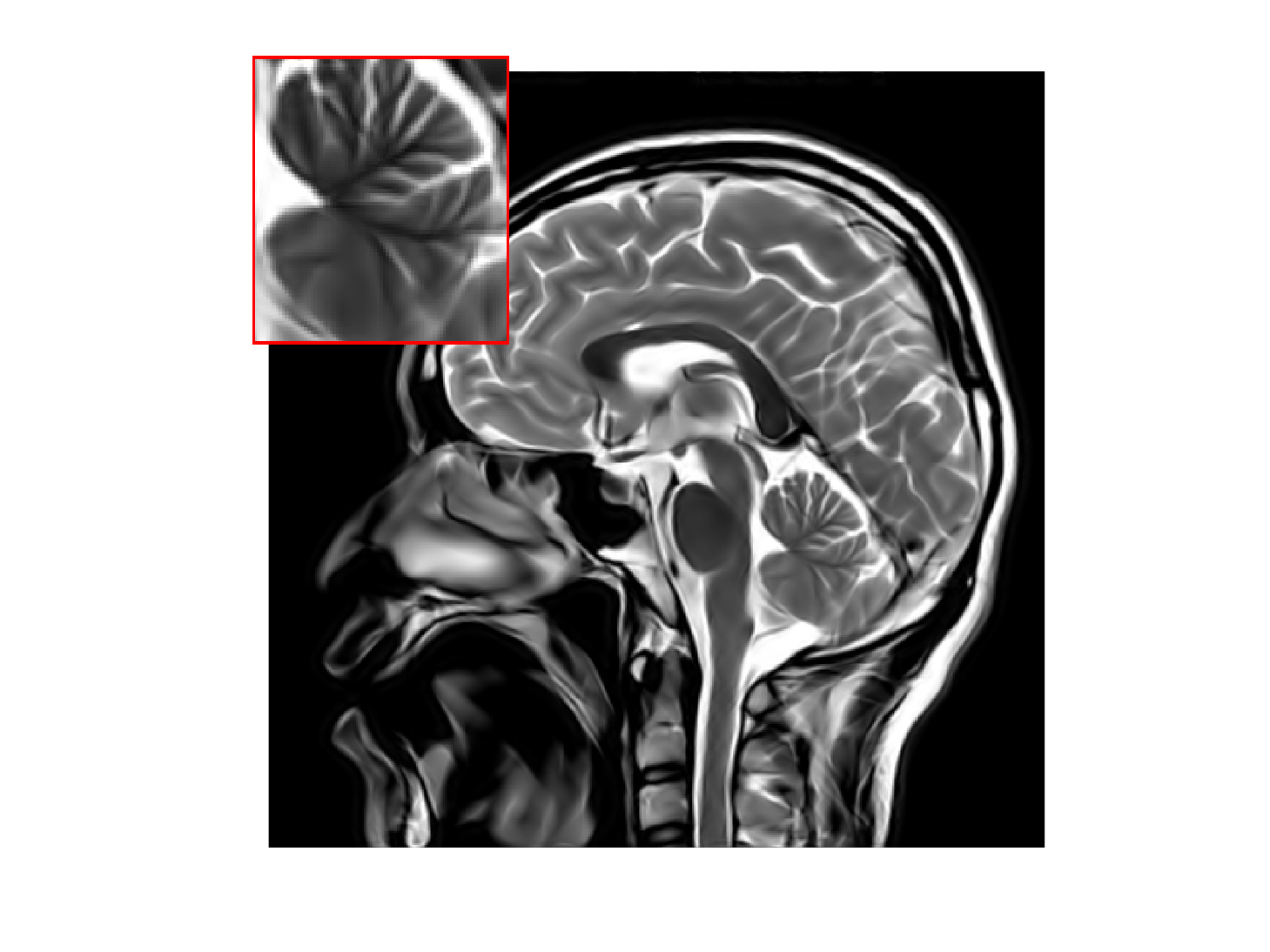}
    \put (12,100) {WARPd, PSNR=31.4}
     \end{overpic}
  \end{minipage}
	\begin{minipage}[b]{0.32\textwidth}
	\centering
    \begin{overpic}[width=\textwidth,trim={25mm 0mm 25mm 0mm},clip]{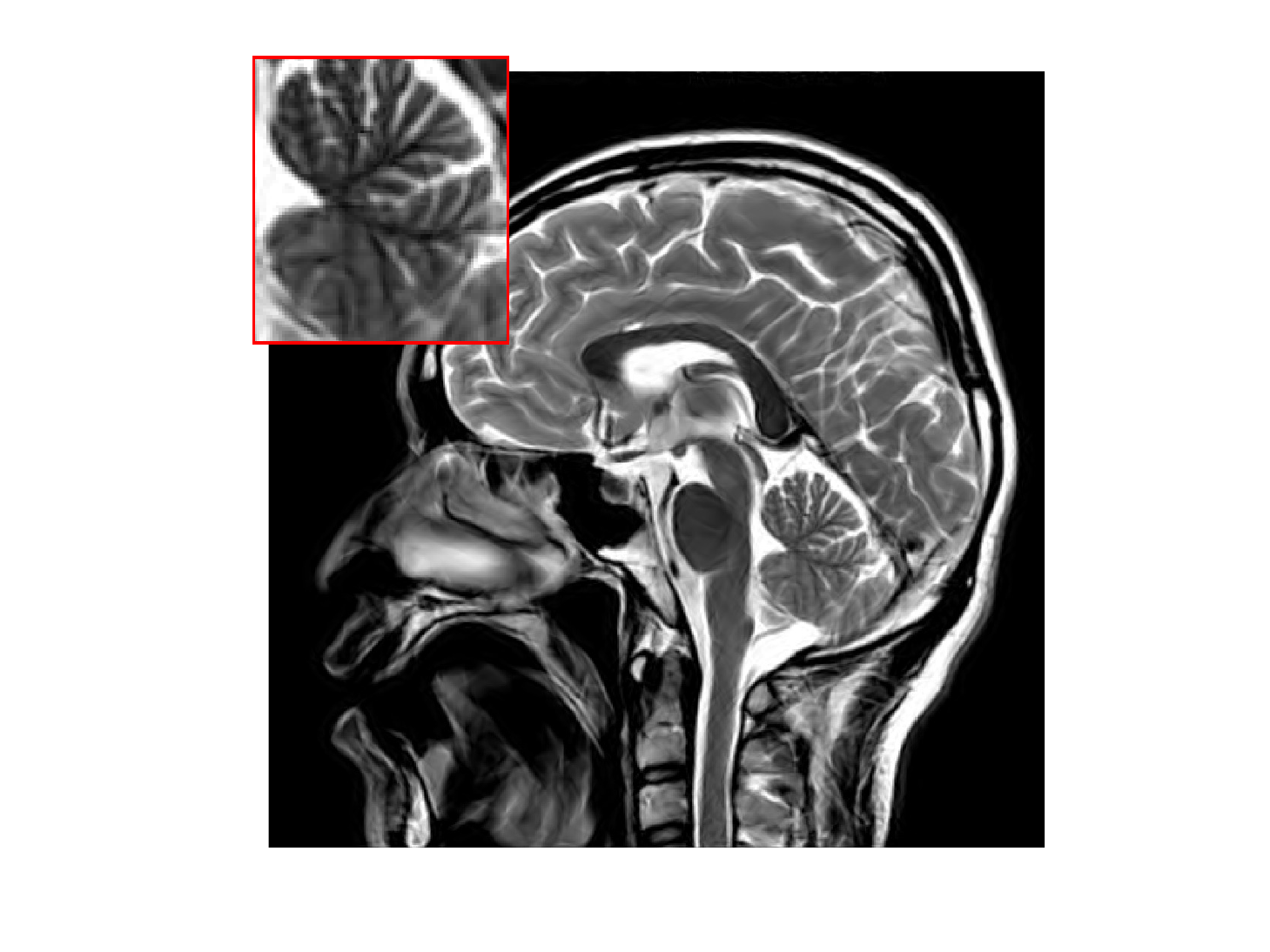}
    \put (10,100) {WARPdSR, PSNR=33.1}
     \end{overpic}
  \end{minipage}
  \begin{minipage}[b]{0.32\textwidth}
    \begin{overpic}[width=\textwidth,trim={0mm -30mm 0mm 0mm},clip]{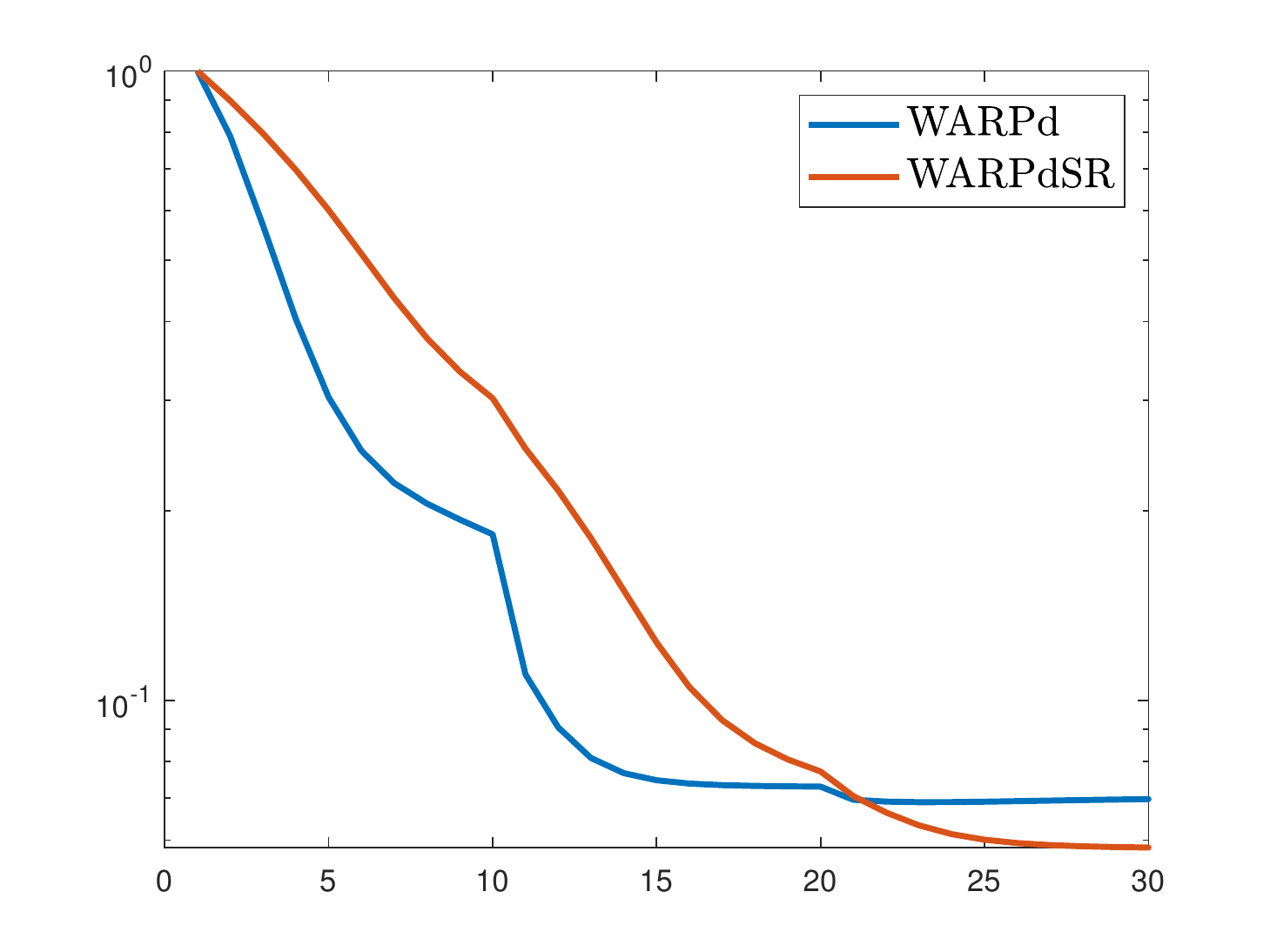}
		\put (0,35) {\rotatebox{90}{\small{Relative $l^2$ Error}}}
		\put (29,15) {\small{Inner iterations}}
     \end{overpic}
  \end{minipage}
	\vspace{-7mm}
	  \caption{Left: Reconstruction using WARPd and \eqref{complicated_problem}. Middle: Reconstruction using WARPdSR. Right: The relative $l^2$ error as a function of the number of inner iterations.}
\label{fig:analysis2}
\end{figure}

To improve the reconstruction further, we consider
\begin{equation}\setlength\abovedisplayskip{5pt}\setlength\belowdisplayskip{5pt}
\label{complicated_problem}
\min_{x\in \mathbb{C}^N} \|WD^*x\|_{l^1}+\mathrm{TGV}_{\alpha}^2(x)\quad \text{s.t.}\quad\|Ax-b\|_{l^2}\leq \epsilon,
\end{equation}
where $W$ denotes a diagonal scaling matrix and $D$ corresponds to a shearlet frame. We used the MATLAB shearlab package in this example, which can be found at \url{https://shearlab.math.lmu.de/}. Throughout this paper, we have so far only discussed numerical examples for WARPd, since the results of WARPdSR are similar (if not better). For completeness, in this example we also consider WARPdSR to demonstrate that it sometimes leads to better reconstructions. The weight matrix $W$ is updated after each call to \texttt{InnerIt} in \cref{alg:RR_primal_dual} (or \texttt{InnerItSR} in \cref{alg:RR_primal_dualB}) according to
\begin{equation}\setlength\abovedisplayskip{5pt}\setlength\belowdisplayskip{5pt}
\label{weight_update}
W_{jj}=\frac{1}{\max\{[D^*x]_{jj},10^{-5}\}}\times\frac{\sum_{k\in I(j)}\max\{[D^*x]_{kk},10^{-5}\}}{|I(j)|},
\end{equation}
where $I(j)$ denotes the set of indices corresponding to the shearlet scale containing the index $j$, and $x$ is the current reconstruction. We initialized the weights according to \eqref{weight_update} with $x=A^*b$. The update rule takes into account the difference in magnitudes of the shearlet coefficients of an image at different scales - see \cite{ahmad2015iteratively,ma2016multilevel} and \cite[Section 4.6]{adcock2021compressive} for the motivation of similar update rules. \cref{fig:analysis2} shows the reconstruction using WARPd (left) and WARPdSR (middle), which show a marked improvement on the results of \cref{fig:analysis}. Moreover, WARPdSR shows a better reconstruction of the fine details of the image. \cref{fig:analysis2} (right) plots the relative $l^2$ error between the reconstruction and the image against the number of inner iterations. Convergence is obtained in under $30$ iterations. This example demonstrates that WARPd and WARPdSR can easily handle more complicated mixed regularization problems such as \eqref{complicated_problem}.




\linespread{0.94}\selectfont{}
\bibliography{NNBib2}
\bibliographystyle{siamplain}

\includepdf[pages=-]{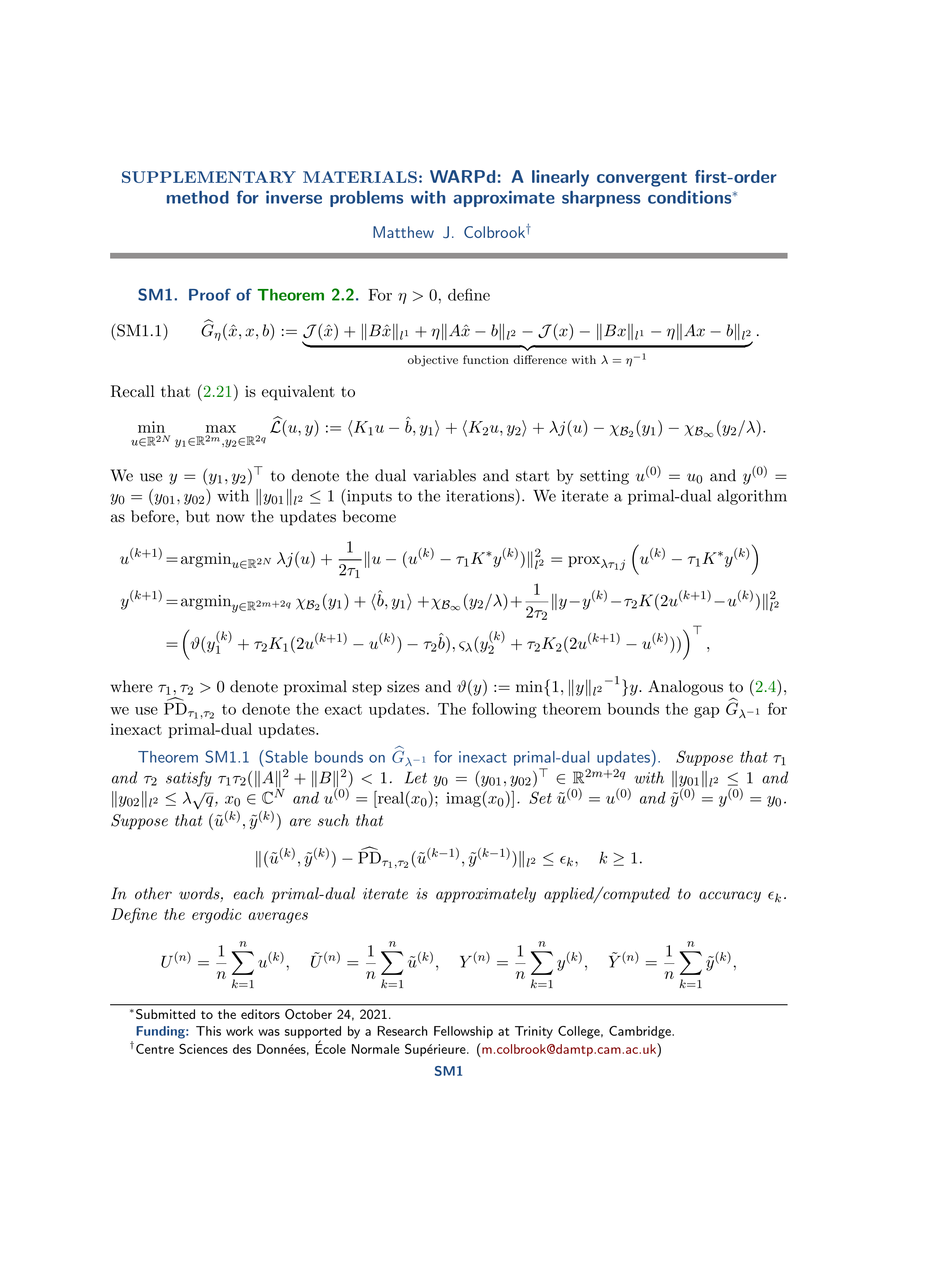}

\end{document}